\theoremstyle{plain}
\newtheorem{thm}[subsection]{Theorem}
\theoremstyle{definition} 
\newtheorem{lem}[subsection]{Lemma}
\newtheorem{cor}[subsection]{Corollary}
\newtheorem{pro}[subsection]{Proposition}
\newtheorem{exa}[subsection]{Example}
\newtheorem{defn}[subsection]{Definition}
\newtheorem{rem}[subsection]{Remark}
\numberwithin{equation}{section}
\title{\vspace*{-4cm} The Fundamental Group of Torus Knots}
\author{Ilyas Aderogba Mustapha (mustapha.ilyas@aims-cameroon.org)\\
African Institute for Mathematical Sciences (AIMS)\\
Cameroon\\\\
{\small Supervised by: Dr. Paul Arnaud Songhafouo Tsopm\'en\'e and Prof. Donald Stanley,}\\
{\small University of Regina, Canada}%
}
\date{{\small 18 May 2018}\\%
  {\scriptsize\it Submitted in Partial Fulfillment of 
    a Structured Masters Degree at AIMS-Cameroon}
}
\begin{document}
\pagestyle{empty}
{ 
\setlength{\headheight}{95pt} 
\fancypagestyle{empty}{%
  \fancyhf{}
  \renewcommand{\headrulewidth}{0pt} 
  \fancyhead[C]{\includegraphics[width = \textwidth]{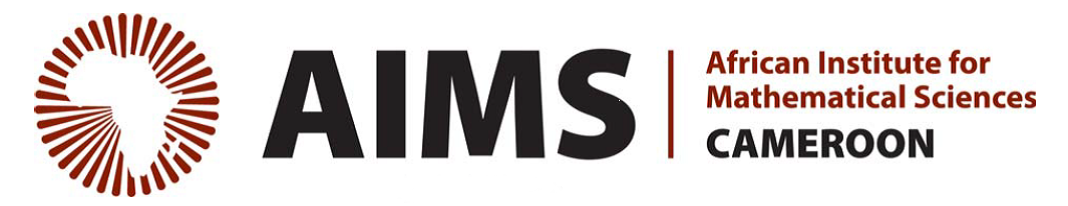}}
} 

\maketitle
}

%
\pagenumbering{roman}
\chapter*{Abstract} 
\addcontentsline{toc}{chapter}{Abstract}
This work is concerned with the calculation of the fundamental group of torus knots. Torus knots are special types of knots which wind around a torus a number of times in the longitudinal and meridional directions. We compute and describe the fundamental group of torus knots by using some concepts in algebraic topology and group theory. We also calculate the fundamental group of an arbitrary knot by using an algorithm called the Wirtinger presentation.

\textbf{Keywords.} Fundamental group, torus knots, arbitrary knots.

\section*{\`Is\d on\'i\d s\'ok\'i (Yor\`ub\'a language)}
I\d s\d{\'e} y\`i\'i j\d {\`e} \`if\d ok\`{an}s\'i p\d {\`e}lu \`i\d sir\`o ti \`aw\d on \d egb\d {\'e} p\`at\`ak\`i \`it\`ak\`un t\'or\d {\'o}\d{\`o}s\`i. \`It\`ak\`un t\'or\d {\'o}\d{\`o}s\`i j\d {\'e} or\'is\`i\'i kan p\`at\`ak\`i n\'in\'u \`aw\d {o}n  \`it\`ak\`un, \'o m\'aa \'n w\'e m\d{\'o} t\'or\d {\'o}\d{\`o}s\`i n\'igb\`a t\'o n\'iye l\d{\'o}n\`aa l\d ongit\'ud\'in\`a \`ati l\d{\'o}n\`aa m\d erid\'i\'on\`a. A se \`ak\'op\d {\`o} \`ati \`ap\'ej\'uwe \`aw\d on \d egb\d {\'e} p\`at\`ak\`i \`it\`ak\`un t\'or\d {\'o}\d {\`o}s\`i n\'ipas\d {\`e} l\'ilo d\'i\d {\`e} n\'in\'u \`aw\d on \`er\`o in\'u t\d op\d {\'o}l\d {\'o}g\`i ti \d {\`o}g\'ib\'ir\`a \`ati d\'i\d {\`e} n\'in\'u \`aw\d on \`er\`o in\'u \`igb\`im\d {\`o} \d egb\d {\'e}. A t\'un \d se \`i\d sir\`o \`aw\d on egb\'e p\`at\`ak\`i ti \`it\`ak\`un al\'a\`il\d {\'e}gb\d {\'e} n\'ipa l\'ilo ag\d {\'o}r\'id\'i\`im\`u kan t\'i a p\`e n\'i \`igb\'ej\'ade Wirtinger.

\vfill
\section*{Declaration}
I, the undersigned, hereby declare that the work contained in this essay is my original work, and that any work done by others or by myself previously has been acknowledged and referenced accordingly.

\includegraphics[height=2cm]{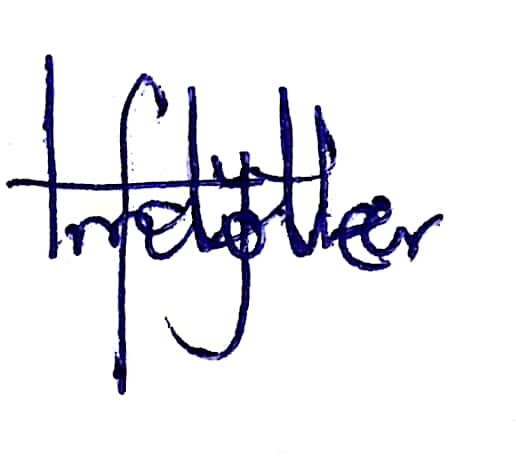} \hrule
Ilyas Aderogba Mustapha, 18 May 2018.

\tableofcontents
\newpage
%
\pagenumbering{arabic}
\pagestyle{myheadings}
\chapter{Introduction}
\label{chap1}
A knot is an embedding of the unit circle $S^1$ inside the 3-dimensional Euclidean space, $\mathbb{R}^3$. A torus knot is a special type of knot which lies on the surface of a torus. Specifically, consider the torus $S^1\times S^1$ embedded in $\mathbb{R}^3$ in the standard way. Let the map $f:S^1\to S^1\times S^1\subset \mathbb{R}^3$ be defined as $f(z)=(z^m,z^n)$, where $m$ and $n$ are relatively prime positive integers. One can show that the map $f$ is an embedding (see Proposition \ref{c3p302}). The torus knot $K=K_{m,n}$ is defined to be the image of $S^1$ under the map $f$. The knot $K$ wraps the torus a total of $m$ times in the longitudinal direction and $n$ times in the meridional direction. Figures \ref{fig1.1} and \ref{fig1.2} show the cases when $(m,n)=(2,3)$ and $(m,n)=(3,4)$ respectively.
\begin{figure}[htbp!]
	\begin{center}
		\begin{minipage}[b]{0.5\linewidth}
			\centering
			\includegraphics[width=0.6\linewidth]{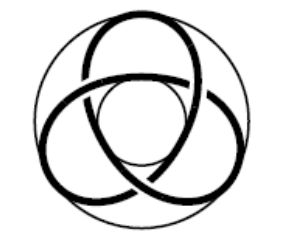} 
			\caption{$(m,n)=(2,3)$ \cite{allen}.}\label{fig1.1} 
		\end{minipage}\hspace{-40pt}
		\begin{minipage}[b]{0.5\linewidth}
			\centering
			\includegraphics[width=0.55\linewidth]{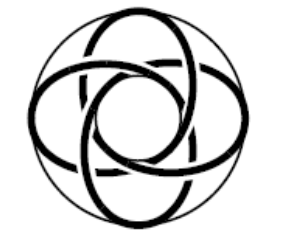} 
			\caption{$(m,n)=(3,4)$ \cite{allen}.}\label{fig1.2}
		\end{minipage}
	\end{center}   
\end{figure}
\vspace{-2pt}
When $m=2$ and $n=3$ as in Figure \ref{fig1.1}, the knot $K=K_{2,3}$ is called a trefoil knot. We can also allow negative values for $m$ or $n$, but it will change the knot $K$ to a mirror-image knot.

This work is concerned with the calculation of the fundamental group of $K$ (that is, the fundamental group of its complement, $\mathbb{R}^3\setminus K$, in $\mathbb{R}^3$), and the description of the structure of that group. Also, we calculate the fundamental group of an arbitrary knot $L$, by using Wirtinger presentation.
\section{Objectives of the Study}
\label{c1s1}
The objectives of this work are the following:
\begin{enumerate}
	\item Compute the fundamental group of $\mathbb{R}^3\setminus K$, where $K$ is a torus knot.
	\item Describe the structure of the fundamental group of $\mathbb{R}^3\setminus K.$
	\item Compute the fundamental group of $\mathbb{R}^3\setminus L$, where $L$ is an arbitrary knot.
\end{enumerate}
\vspace{-9pt}
\section{Structure of the Essay}
\vspace{-2pt}
This work is divided into four chapters. In the first chapter, we introduce the topic, state the objectives, and briefly review the literature. In Chapter \ref{chap2}, we study some concepts in group theory and fundamental group. These concepts will be of great help in the understanding of this work. 
In Chapter \ref{chap3}, we compute and describe the fundamental group of $\mathbb{R}^3\setminus K$. Also, we make use of the Wirtinger presentation to compute the fundamental group of $\mathbb{R}^3\setminus L$, for an arbitrary knot $L$. Finally, in Chapter \ref{chap4}, we give the conclusion of the work.
\newpage
\section{Brief Review of Literature}
Many authors have discussed extensively on the theory of knot. In the work of Martin Sondergaard \cite{chris}, he introduced basic knot theory, along with concepts from topology, algebra, and algebraic topology, as they relate to knot theory. Then, Chiara and Renzo \cite{obert} presented geometric and topological properties of torus knots. They used standard parametrization and symmetric aspects to find new results on local and global properties of the knots.

In the shape of torus knots, elastic filaments have been studied in relation to bending and torsional energy \cite{coleman}. Examples of minimum energy configurations were also presented in \cite{coleman} for both torus knots and chain formed by linking two unknots. Also, in terms of Mobius energy, Kim and Kusner \cite{kim} used the principle of symmetric criticality to construct torus knots and links that extremize the Mobius invariant energy which was introduced by O'Hara and Freedman. It was described further in \cite{kim}, the experiments with a discretized version of the mobius energy which can be applied to the study of arbitrary knots and links. 

Torus knots have been found by Ricca \cite{ric} in the theory of integrable systems as solutions to differential equations. He went further to use Hasimoto map to interpret soliton conserved quantities in terms of global geometric quantities and he showed how to express these quantities as polynomial invariants for torus knots. Also, in Maxwell's theory of electromagnetism, Kedia et al \cite{ked} constructed a family of null solutions to Maxwell's equation in free space whose field lines encode all torus knots and links. They went further to illustrate the geometry and evolution of the solutions and manifested the structure of nested knotted tori filled by the field lines.

As phase singularities in optics, Irvine and Bouwmeester \cite{irv} showed how a new class of knotted beams of light can be derived, and showed that approximate knots of light may be generated using tightly focused circularly polarized laser beams. In fluid mechanics, Ricca et al \cite{ricca} addressed the time evolution of vortex knots in the context of Euler equations. They went further and found that thin vortex knots which are unstable under the localized induction approximation have a greatly extended lifetime whenever Biot-Savart induction law is used. Torus knots may also arise as colloids as stated in \cite{tka}, and in many other physical, biological and chemical context.

\chapter{Preliminaries}
\label{chap2}
In this chapter, we discuss some important concepts which will allow us to understand this work better. 
In the first section, we recall some concepts in group theory. Then we introduce the notion of free groups and presentations in Section \ref{c2s2}. Section \ref{c2s3} deals with the free product of groups. Furthermore, in Section \ref{c2s4}, we give an introduction to the fundamental group of topological spaces, and we see what it means for a topological space to deformation retract onto one of its subspaces in Section \ref{c2s5}. Finally, in Section \ref{c2s6}, we state van Kampen's theorem.  This theorem is useful in the computation of the fundamental group of a topological space which can be decomposed into simpler spaces with known fundamental groups.
\vspace{-7pt}
\section{Introduction to Group Theory}\label{c2s1}
The goal of this section is to recall some concepts in group theory, such as normal subgroups, homomorphisms, kernels and quotients, abelianization of a group and many more.
\begin{defn}
	A nonempty set $G$ together with a binary operation
	$$(x,y)\mapsto x\ast y : G\times G\rightarrow G,$$
	is called a group if the operation is associative, there exists an identity element, and every element of $G$ is invertible.\label{s2d1}
\end{defn}	
	Sometimes, we write $G$ instead of $(G,\ast)$ and $xy$ instead of $x\ast y$. Examples are
		$(\mathbb{Z},+), (\mathbb{R},+),$ and $ (\mathbb{R}\setminus \{0\},\cdot)$. However, $(\mathbb{R},\cdot)$ is not a group since the inverse of 0 does not exist.
	\begin{defn}
		The order of a group $G$, denoted by $|G|$ or $\mathrm{ord}(G)$, is the cardinality of $G$.
	\end{defn}
	\begin{defn}
		Let $G$ be a group and let $x\in G$. Then the order of $x,$ denoted by $\mathrm{ord}(x)$ or $|x|$, is the smallest integer $m>0$ such that $x^m=e$. 		
		If such $m$ does not exist, then $x$ is said to have infinite order.
	\end{defn}
	\begin{defn}[Direct product of groups]
		Let $(G_1,\ast)$ and $(G_2,\diamond)$ be groups, then $(G_1\times G_2,\cdot)$,
		 called the direct product of $G_1$ and $G_2$, is a group with the binary operation $(\cdot)$ defined component-wise.
		 That is,
		$$(x_1,y_1)\cdot(x_2,y_2)=(x_1\ast x_2, y_1\diamond y_2),~\forall x_1, x_2\in G_1\text{ and }\forall y_1,y_2\in G_2,$$
		where the Cartesian product, $G_1\times G_2$ is the underlying set.	
	\end{defn}
	\begin{defn}
		A group $G$ is said to be abelian or commutative if for all $ x,y\in G$, $xy=yx.$
	\end{defn}
		If a group is abelian, then direct product can be regarded as direct sum, and it is denoted by $G_1\oplus G_2$.
	\begin{defn}
		Let $G$ be a group, then the \textbf{torsion} of $G$, denoted by $\mathrm{Tor}(G)$, is defined as $$\mathrm{Tor}(G)=\{x\in G| ~x^n=e, n\in \mathbb{N}\}.$$
	\end{defn}
	If all elements of a group $G$ has infinite order except the identity element, then the group is said to be \textbf{torsion-free}.
	\begin{defn}
		The center of a group $G$, denoted by $Z(G)$, is defined as $$Z(G)=\{x\in G| xy=yx,~ \forall y\in G\}.$$
	\end{defn}
	\subsection{Subgroups}
	Suppose $(G,\ast)$ is a group. All the subsets of $G$ that form a group under $\ast$ are called 
	\newpage
	subgroups of $G$. Subgroup is usually denoted by $"\le".$ If $G_1$ is a subgroup of $G_2$, we write $G_1\le G_2.$
	\begin{defn}\cite{john}:
		Let $G$ be a group, and let $\emptyset\ne S\subseteq G$. Then $S$ is a subgroup of $G$ if
		\begin{itemize}
			\item  $e\in S$;
			\item $\forall x, y\in S, ~xy\in S$;
			\item for each $x\in S, ~x^{-1}\in S,$ where $x^{-1}$ is the inverse of $x$.
		\end{itemize}\label{sub}
	\end{defn}
	\begin{pro}\cite{john}:
		Let $G$ be a group, and let $H\subseteq G$. Then $H$ is a subgroup of $G$ if and only if $H\ne \emptyset$, and for any $x, y\in H,~xy^{-1}\in H.$ \label{s2p1}
	\end{pro}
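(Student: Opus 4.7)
The plan is to prove both implications using Definition \ref{sub}, the three-part characterization of a subgroup. Since one direction is essentially unpacking the definition, almost all the work lies in showing that the single condition $xy^{-1}\in H$ (together with non-emptiness) implies the three subgroup axioms.

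For the forward direction, I would assume $H$ is a subgroup. By Definition \ref{sub}, $e\in H$, so in particular $H\ne\emptyset$. Moreover, if $x,y\in H$, then the inverse axiom gives $y^{-1}\in H$, and the closure axiom applied to $x$ and $y^{-1}$ gives $xy^{-1}\in H$, as required.

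For the converse, assume $H\ne\emptyset$ and that $xy^{-1}\in H$ whenever $x,y\in H$. I would then verify the three bullet points of Definition \ref{sub} in the following order. First, since $H$ is nonempty, I pick some $a\in H$ and apply the hypothesis to the pair $(a,a)$ to obtain $aa^{-1}=e\in H$; this gives the identity axiom. Second, for any $y\in H$, I apply the hypothesis to the pair $(e,y)$, which by the first step lies in $H\times H$, to obtain $ey^{-1}=y^{-1}\in H$; this gives the inverse axiom. Third, for any $x,y\in H$, the inverse axiom just established shows $y^{-1}\in H$, and applying the hypothesis now to the pair $(x,y^{-1})$ gives $x(y^{-1})^{-1}=xy\in H$; this gives the closure axiom. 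Thus $H$ satisfies the conditions of Definition \ref{sub} and is therefore a subgroup of $G$.

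The proof is not technically difficult; the only real obstacle is organizational, namely recognizing that the three subgroup axioms must be extracted from the single hypothesis in exactly this order (identity, then inverses, then closure), because each step feeds the next. I would emphasize this ordering when writing out the proof to keep the logic transparent.
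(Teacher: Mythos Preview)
Your proof is correct and is the standard argument for the one-step subgroup criterion. Note, however, that the paper does not actually supply its own proof of this proposition; it is simply stated with a citation to \cite{john}, so there is nothing to compare against beyond confirming that your argument is the expected one.
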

	The center of a group $G$, $Z(G)$, is a subgroup of $G$. In fact, it is a normal subgroup of $G$ as shown in Example \ref{s2e1}.
	\begin{defn}\cite{milneGT}:
		Let $X$ be a subset of a group $G$. Then $\langle X\rangle$ is called the subgroup generated by $X$, where $\langle X\rangle$ is the set of all finite products of elements of $X$. That is $\langle X\rangle=\left\{\displaystyle\prod_{i=1}^{n}a_i|~a_i\in X\right\}.$
	\end{defn}
	The set $X$ is a basis of group $G$, and hence the rank of $G$ is the cardinality of $X$.
	\begin{defn}\cite{milneGT}
		A group $G$ is said to be cyclic if it is generated by a single element. That is, $$G=\langle x\rangle =\{x^n|n\in\mathbb{Z}\}, \text{ for some } x\in G.$$
	\end{defn}
	If the operation defined on $G$ is addition (+), then $G=\langle x\rangle=\{nx|n\in\mathbb{Z}\}$ as in the example below.
		\begin{exa}
			$(\mathbb{Z}, +)$ is a cyclic group since it is generated by 1. That is $\mathbb{Z}=\langle 1\rangle=\{n.1|n\in\mathbb{Z}\},$ where $n\cdot 1=1+1+\dots+1$ ($n$-times).
		\end{exa}
	Now, suppose we want to find the order of an element of a finite group, the following proposition says that it is enough to find the cardinality of the cyclic group generated by that element.
	\begin{pro}\cite{john}:
		Let $G$ be a finite group, and let $x\in G.$ Then $\mathrm{ord}(x)=|\langle x\rangle|$.\label{s2p2}
	\end{pro}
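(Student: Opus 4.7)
The plan is to identify explicitly the elements of $\langle x\rangle$ in terms of the powers of $x$ up to exponent $\mathrm{ord}(x)-1$, and then show these powers are pairwise distinct. Let $n=\mathrm{ord}(x)$. First I would justify that $n$ is finite, which requires the hypothesis that $G$ is finite: the sequence $x,x^2,x^3,\ldots$ lies in the finite set $G$, so by the pigeonhole principle there exist integers $i<j$ with $x^i=x^j$, and hence $x^{j-i}=e$, showing that the set of positive integers $m$ with $x^m=e$ is nonempty and therefore has a least element $n$.

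Next I would prove the set equality
\[
\langle x\rangle=\{e,x,x^2,\ldots,x^{n-1}\}.
\]
The inclusion $\supseteq$ is immediate from the definition of $\langle x\rangle$. For the reverse inclusion, an arbitrary element of $\langle x\rangle$ has the form $x^k$ for some $k\in\mathbb{Z}$ (after noting that negative powers can be absorbed using $x^{-1}=x^{n-1}$, since $x^n=e$). Writing $k=qn+r$ with $0\le r<n$ via the division algorithm, I compute $x^k=(x^n)^q x^r = x^r$, which lies in the claimed finite set.

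To finish, I would show that the $n$ listed elements $e,x,\ldots,x^{n-1}$ are distinct. If $x^i=x^j$ for some $0\le i<j\le n-1$, then $x^{j-i}=e$ with $0<j-i<n$, contradicting the minimality of $n$ as the order of $x$. Therefore $|\langle x\rangle|=n=\mathrm{ord}(x)$.

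The only mildly delicate point, and the one I would take care to spell out, is the passage from $\mathbb{Z}$-indexed powers to non-negative powers, i.e.\ why $\langle x\rangle$ equals $\{x^k:k\in\mathbb{Z}\}$ rather than merely $\{x^k:k\ge 0\}$; this uses the definition of $\langle x\rangle$ as all finite products of elements of $\{x\}$, together with the convention that a subgroup-closure allows inverses. Everything else is routine arithmetic with the division algorithm and minimality of $n$.
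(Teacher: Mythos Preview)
Your argument is correct and is the standard proof of this elementary fact: establish that $\mathrm{ord}(x)$ is finite via pigeonhole, reduce every power $x^k$ to one with $0\le r<n$ via the division algorithm, and use minimality of $n$ to show those $n$ powers are distinct.

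There is nothing to compare against, however: the paper does not prove this proposition. It is stated with a citation to \cite{john} and immediately followed by the next definition, so no proof is given in the text. Your write-up therefore goes beyond what the paper supplies. The caveat you flag in your last paragraph is apt given the paper's own definition of $\langle X\rangle$ as ``all finite products of elements of $X$'' (which, read literally, omits inverses); but since the paper's definition of a cyclic group explicitly takes $\langle x\rangle=\{x^n\mid n\in\mathbb{Z}\}$, you are on safe ground using integer exponents.
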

	\begin{defn}[Group homomorphism]\cite{john}:
		Let $(G,\ast)$ and $(H,\diamond)$ be two groups. A {group homomorphism} from $G$ to $H$ is a map $f: G\rightarrow H$ such that $$f(x\ast y)=f(x)\diamond f(y), ~\forall x,y\in G.$$
	\end{defn}
		If further, $f$ is bijective, then we say that $f$ is an \textbf{isomorphism}. In this case, we say that $G$ is isomorphic to $H$, and this is written as $G\cong H.$ 
	\begin{defn}[Cosets]\cite{john}:
		Let $H$ be a subgroup of a group $G$, and let $x\in G$. Then the \textbf{coset} $xH$ is a subset of $G$ defined as $$xH=\{xh| h\in H\}.$$
	\end{defn}
		The set $xH$ as defined above is called left coset of $H$ in $G$. We also have right coset of $H$, which are subsets of the form $Hx=\{hx|h\in H\}.$ So a coset can either be left coset or right coset.
	\begin{pro}\cite{conrad}:
		Let $H$ be a subgroup of a group $G$. Then $xH=H$ if and only if $x\in H.$\label{s2p17}
	\end{pro}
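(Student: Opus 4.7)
The plan is to prove the biconditional by handling the two implications separately, relying only on the three subgroup axioms of Definition \ref{sub}.

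For the forward direction, I will assume $xH = H$ and observe that because $H$ is a subgroup, $e \in H$. Then $x = xe$ is an element of $xH$ by definition of the coset, so $x \in xH = H$. This part is essentially a one-line argument.

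For the reverse direction, I will assume $x \in H$ and establish the two inclusions $xH \subseteq H$ and $H \subseteq xH$. The first inclusion is an immediate application of the closure axiom: an arbitrary element of $xH$ has the form $xh$ with $h \in H$, and since $x \in H$ as well, the product $xh$ lies in $H$. The second inclusion is the only step with a tiny bit of content: given any $h \in H$, I will use the inverse axiom (Definition \ref{sub}) applied to $x \in H$ to get $x^{-1} \in H$, then closure to get $x^{-1}h \in H$, and finally rewrite $h = x(x^{-1}h)$ to exhibit $h$ as an element of $xH$.

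I do not anticipate any real obstacle. The only mildly non-obvious move is the rewriting $h = x(x^{-1}h)$ in the second inclusion of the reverse direction; everything else follows directly from the definition of the coset $xH$ and the subgroup conditions. I will not need Proposition \ref{s2p1} or any other result, just Definition \ref{sub}.
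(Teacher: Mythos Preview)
Your proof is correct and is the standard argument. Note, however, that the paper does not actually prove this proposition; it is simply stated with a citation to \cite{conrad} and no proof is supplied. There is therefore nothing in the paper to compare against, and your argument from Definition~\ref{sub} alone would serve perfectly well as the missing proof.
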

	\newpage
	\begin{defn}\cite{milneGT}:
		Let $H$ be a subgroup of a group $G$. Then the index $(G:H)$ of $H$ in $G$, is the cardinality of the set $\{aH| a\in G \}.$
	\end{defn}
	\begin{exa}
		If $G$ is a group, then $(G:1)$ is the order of $G$. In this case, $H=1$, and hence we have that $(G:1)$ is the cardinality of the set $\{a|a\in G\}$, which is the order of $G$.
	\end{exa}
	\begin{thm}[Lagrange's Theorem \cite{milneGT}]
		Let $G$ be a finite group, and let $H\le G$. Then $$(G:1)=(G:H)(H:1).$$ 
	\end{thm}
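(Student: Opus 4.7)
The plan is the standard coset-partition argument. I would first show that the collection of left cosets $\{xH : x \in G\}$ forms a partition of $G$, and then show that each coset is in bijection with $H$. Combining these two facts gives $|G|$ as the number of cosets times $|H|$, which is exactly the equation $(G:1)=(G:H)(H:1)$.

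For the partition step, I would check two things. First, that every element of $G$ lies in some coset: for any $x \in G$, since $e \in H$, we have $x = xe \in xH$. Second, that any two cosets are either equal or disjoint: if $xH \cap yH \neq \emptyset$, pick $z = xh_1 = yh_2$ with $h_1,h_2 \in H$, so $x = y(h_2 h_1^{-1})$. Since $h_2 h_1^{-1} \in H$, we have $y^{-1}x \in H$, so by Proposition \ref{s2p17} applied appropriately, $(y^{-1}x)H = H$, which rearranges to $xH = yH$.

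For the equinumerosity step, I would fix a coset $xH$ and define the map $\phi_x : H \to xH$ by $\phi_x(h) = xh$. Surjectivity is immediate from the definition of $xH$. Injectivity follows from left cancellation in $G$: if $xh_1 = xh_2$, then multiplying on the left by $x^{-1}$ gives $h_1 = h_2$. So $|xH| = |H|$ for every $x \in G$.

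Finally, since $G$ is finite and the distinct left cosets partition $G$ into $(G:H)$ pieces each of size $|H| = (H:1)$, summing the cardinalities yields $(G:1) = (G:H)(H:1)$. I do not expect any genuine obstacle here; the only place to be careful is the coset-equality argument, where one must be explicit about which element's coset is being manipulated and invoke Proposition \ref{s2p17} cleanly rather than appealing to an equivalence-relation formalism that has not been set up in the text.
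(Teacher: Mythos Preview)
Your argument is the standard and correct proof of Lagrange's theorem: partition $G$ into left cosets, show each coset has the same cardinality as $H$ via the bijection $h \mapsto xh$, and count. The use of Proposition~\ref{s2p17} in the coset-equality step is clean: from $y^{-1}x \in H$ you get $(y^{-1}x)H = H$, and left-multiplying by $y$ gives $xH = yH$.

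Note, however, that the paper does not actually supply a proof of this theorem; it is stated with a citation to \cite{milneGT} and then used immediately to derive Corollary~\ref{s2c1}. So there is no proof in the paper to compare against. Your write-up would serve perfectly well as the omitted proof, and it aligns with the standard treatment in the cited reference.
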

	\begin{cor}\cite{john}:
		Let $G$ be a finite group, and let $x\in G$. Then $\mathrm{ord}(x)$ divides $\mathrm{ord}(G).$\label{s2c1}
	\end{cor}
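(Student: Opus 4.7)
The plan is to deduce this directly from Lagrange's theorem together with Proposition~\ref{s2p2}, which identifies the order of an element with the order of the cyclic subgroup it generates. The argument is essentially a one-line bridge between the two results.

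First I would form the cyclic subgroup $H = \langle x \rangle \le G$. Since $G$ is finite, so is $H$, and by Proposition~\ref{s2p2} we have $\mathrm{ord}(x) = |\langle x \rangle| = (H:1)$. Next I would apply Lagrange's theorem to the pair $H \le G$, which yields
\begin{equation*}
(G:1) = (G:H)\,(H:1).
\end{equation*}
Rewriting, this says $\mathrm{ord}(G) = (G:H) \cdot \mathrm{ord}(x)$, and since $(G:H)$ is a positive integer, $\mathrm{ord}(x)$ divides $\mathrm{ord}(G)$.

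There is no real obstacle here: the corollary is a packaging of two earlier results, so the only thing to be careful about is citing them in the right order and making sure the passage from $|H|$ to $\mathrm{ord}(x)$ invokes Proposition~\ref{s2p2} explicitly, as Lagrange's theorem only gives divisibility of subgroup orders.
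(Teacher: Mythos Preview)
Your proof is correct and matches the paper's approach exactly: form $H=\langle x\rangle$, apply Lagrange's theorem to get $(G:1)=(G:H)(H:1)$, and identify $(H:1)$ with $\mathrm{ord}(x)$. The only difference is cosmetic---you cite Proposition~\ref{s2p2} explicitly for that last identification, whereas the paper uses it tacitly.
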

	\begin{proof}
			By definition of $\langle x\rangle$, we have that $\langle x\rangle$ is a subgroup of $G$.
			Also, by Lagrange's theorem, $(G:1)=(G:H)(H:1)$. That is $$\mathrm{ord}(G)=t\cdot\mathrm{ord}(\langle x\rangle)=t \cdot\mathrm{ord}(x),$$ where $t=(G:H).$ Hence $\mathrm{ord}(x)$ divides $\mathrm{ord}(G).$
	\end{proof}
	\begin{defn}[Normal Subgroup \cite{milneGT}]
		Let $G$ be a group, and let $N$ be a subgroup of $G$. Then $N$ is a \textbf{normal subgroup} of $G$, denoted by $N\triangleleft G$, if and only if $gNg^{-1}=N$, for all $g\in G.$
	\end{defn}
	    This means that a normal subgroup of a group $G$ is a subgroup which does not change under conjugation by elements of $G.$
		To show that $N$ is a normal subgroup, it is enough to show that $gNg^{-1}\subseteq N$, for all $g\in G$. That is, $gng^{-1}\in N$ for all $g\in G$, and for all $n\in N.$ This is because $N\subseteq gNg^{-1}$ always, this can be seen by taking $g$ to be the identity element of $G$.
	\begin{exa}
		The center of a group $G$, $Z(G)$, is a normal subgroup of $G$. This is because if $g$ is an arbitrary element in $G$, and $z\in Z(G)$, then $$gzg^{-1}=zgg^{-1}\text{ (since $z\in Z(G)$)}=z\in Z(G).\label{s2e1}$$
	\end{exa}
	\subsection{Kernels and Quotients}
	Suppose $G$ and $H$ are two groups, and $f$ is a group homomorphism from $G$ to $H$. Then the kernel of $f$, is the set of all elements in $G$ that are mapped to the identity element in $H$.
	\begin{defn}\cite{milneGT, john}:
		Let $f:G\to H$ be a group homomorphism. Then the kernel of $f$, denoted by $\ker(f)$ is
		$$\ker(f)=\{x\in G| f(x)=e\},$$
		and the image of $f$, denoted by Im$(f)$ is $$\text{Im}(f)=\{f(x)\in H| x\in G\}.$$
	\end{defn}
	\begin{defn}
		Let $G$ be a group, and let $N\triangleleft G$. Then the quotient group (also known as factor group) of $N$ in $G$, written as $G/N$ or $\dfrac{G}{N}$ is the set of all cosets of $N$ in $G$. That is  $$G/N=\{gN | ~g\in G\}.$$
	\end{defn}
	All elements of $G/H$ are of the form $gH$ for some $g\in G$, and the identity element of $G/N$ is $N$.
	\begin{defn}\cite{john}
		Let $G$ be a group and let $x\in G$. Then $y\in G$ is said to be a conjugate of $x$ if $$y=gxg^{-1}, \text{ for some $g\in G.$}$$
	\end{defn}
	\begin{thm}[First Isomorphism Theorem \cite{john}]
		If $\phi : G\rightarrow H$ is a homomorphism. Then
			 $$\ker \phi \triangleleft G \text{ and }\frac{G}{\ker \phi} \cong  \text{Im}(\phi).$$
	\end{thm}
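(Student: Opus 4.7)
The plan is to split the statement into two independent claims and handle them in order: first show $\ker\phi\trianglelefteq G$, then construct an explicit isomorphism $G/\ker\phi\to \mathrm{Im}(\phi)$.

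For the first claim, I would verify $\ker\phi$ is a subgroup using Proposition \ref{s2p1}: it is nonempty (since $\phi(e)=e$ because $\phi$ is a homomorphism), and for any $x,y\in\ker\phi$ the computation $\phi(xy^{-1})=\phi(x)\phi(y)^{-1}=e\cdot e^{-1}=e$ shows $xy^{-1}\in\ker\phi$. For normality, I would fix $g\in G$ and $k\in\ker\phi$ and compute $\phi(gkg^{-1})=\phi(g)\phi(k)\phi(g)^{-1}=\phi(g)\,e\,\phi(g)^{-1}=e$, so $gkg^{-1}\in\ker\phi$. By the remark after the definition of normal subgroup, this inclusion $g(\ker\phi)g^{-1}\subseteq\ker\phi$ suffices to conclude $\ker\phi\trianglelefteq G$.

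For the isomorphism, write $N=\ker\phi$ and define $\psi: G/N\to\mathrm{Im}(\phi)$ by $\psi(gN)=\phi(g)$. The key step, and the one I expect to be the main obstacle, is showing $\psi$ is \emph{well-defined}: if $gN=g'N$, then by Proposition \ref{s2p17} applied to $g^{-1}g'$ we have $g^{-1}g'\in N$, whence $\phi(g)^{-1}\phi(g')=\phi(g^{-1}g')=e$, so $\phi(g)=\phi(g')$. Every subsequent check reduces to a short computation.

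Once well-definedness is in place, the remaining verifications are routine: $\psi$ is a homomorphism because $\psi(gN\cdot g'N)=\psi(gg'N)=\phi(gg')=\phi(g)\phi(g')=\psi(gN)\psi(g'N)$; it is surjective onto $\mathrm{Im}(\phi)$ essentially by definition, since any $\phi(g)\in\mathrm{Im}(\phi)$ is the image of $gN$; and it is injective because $\psi(gN)=e$ forces $\phi(g)=e$, hence $g\in N$, and therefore $gN=N$ is the identity coset in $G/N$. Combining these three properties gives $G/\ker\phi\cong\mathrm{Im}(\phi)$, completing the proof.
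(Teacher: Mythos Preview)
Your proof is correct and follows the standard argument. Note, however, that the paper does not actually supply its own proof of this theorem: it is stated with a citation and used as a black box later on, so there is no paper-specific approach to compare against. Your write-up would serve perfectly well as the missing proof.
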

	\subsection{Commutator Subgroup}
	The commutator subgroup of a group $G$, also known as derived
	subgroup of $G$, is important because it is the smallest normal subgroup such that the factor group of $G$ by this subgroup is abelian.
	\begin{defn}
		Let $G$ be a group, and let $x,y\in G.$ Then the \textbf{commutator} of $x$ and $y$ is $xyx^{-1}y^{-1}.$
	\end{defn}
	\begin{defn}\cite{wolf2}
		Let $G$ be a group. The commutator subgroup of $G$ is the subgroup generated by the commutators of its elements. It is denoted by $[G,G].$
	\end{defn}
	\subsection{Abelianization}
	Generally, groups are not abelian. But there is always a group homomorphism $\phi: H\rightarrow H'$ to an Abelian group. This homomorphism is called abelianization. The homomorphism is described by its kernel, the commutator subgroup $[H,H]$, which is the unique smallest normal subgroup of $H$ such that $H'=H/[H,H]$ is Abelian \cite{wolf3}.
	\begin{exa}
		Let $G$ be a group generated by $x$ and $y$. Then the Abelianization of $G$ is given as $$G'=\frac{G}{[G,G]}=\frac{\langle x,y\rangle}{\langle xyx^{-1}y^{-1}\rangle}.$$
	\end{exa}
	In short, the abelianization of a group $G$, is the quotient group $\dfrac{G}{[G,G]}$, where $[G,G]$ is the commutator subgroup of $G$.
	\section{Free Groups and Presentations}\label{c2s2}
	This section is about group presentation, $\langle X|R\rangle$, where $X$ is the set of all generators of the group, and 
	$R$ is the set of the relations. Naturally, many groups arise in this form in algebraic topology, so we need to recall them. A group of the form $\langle X|\rangle$, that is, no relation, is called a free group.	
	\begin{defn}
		Let $X$ be a set. A \textbf{word} over $X$ is a sequence $w=w_1w_2\dots w_n, ~n\ge 0,~w_i\in X.$ In this case, the length of the word is $n$. When $n=0,$ we have an empty word, denoted by $e.$
	\end{defn}
	\begin{defn}
		Let $A$ and $B$ be two sets. The disjoint union of $A$ and $B$, denoted by, $A\amalg B$ is the set $$(A\times \{0\})\cup (B\times \{1\}).$$
	\end{defn}
	If $A$ and $B$ are groups, we define $A\star B$ to be the set of all words $w_1\dots w_n,~n\ge 0$, on the alphabet $A\amalg B$ such that adjacent letters $w_i$ and $w_{i+1}$ are from different groups. Then words of this form are called \textbf{reduced words}.
	\begin{exa}
		Let $\mathbb{Z}_x=\langle x\rangle$, and $\mathbb{Z}_y=\langle y\rangle$. The word $x^2yx^{-1}x^6$ is not reduced, and hence the reduction of this word will be $x^2yx^5\in\mathbb{Z}_x\star\mathbb{Z}_y$.
	\end{exa}
	\begin{defn}\cite{wolf4}:
		A group $F$ is said to be \textbf{free} if there is no relation between its generators except the relationship between an element and its inverse.
	\end{defn}
		This means that, in free groups, relators are of the form $abb^{-1}a^{-1}$.
	\begin{defn}\cite{holt2013presentation}:
		Let $X$ be a subset of a group $F$. Then $F$ is free on $X$ if for any group $G$ and any map $f:X\rightarrow G,$ there exists uniquely a homomorphism $f':F\rightarrow G$ with $f'(x)=f(x),~\forall x\in X.$
	\end{defn}
	\begin{pro}\cite{holt2013presentation}:
		Let $G$ be any group. Then $G$ is isomorphic to a quotient group of a free group.\label{s2p3}
	\end{pro}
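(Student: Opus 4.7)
The plan is to realize $G$ as the image of a surjective homomorphism out of a free group, then apply the First Isomorphism Theorem. This is the most natural route because the definition of a free group on a set $X$ (via the universal mapping property) is tailor-made to produce homomorphisms whose target is specified by where the generators go.

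First I would choose a set $X$ that generates $G$; the safe default is simply $X = G$, so that $\langle X \rangle = G$ holds trivially. Next, I would let $F$ be the free group on $X$, which exists by the preceding definition. The inclusion $\iota : X \hookrightarrow G$ is a set map, so the universal property of free groups (the definition just stated before the proposition) yields a unique homomorphism $\phi : F \to G$ with $\phi(x) = \iota(x) = x$ for every $x \in X$.

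The next step is to observe that $\phi$ is surjective: every element of $G$ is a finite product of elements of $X$ and their inverses (since $X$ generates $G$), and such a product lies in $\mathrm{Im}(\phi)$ because $\phi$ is a homomorphism that fixes each generator. Hence $\mathrm{Im}(\phi) = G$. Then the First Isomorphism Theorem (stated earlier in the chapter) gives $\ker(\phi) \triangleleft F$ and
\[
G \;=\; \mathrm{Im}(\phi) \;\cong\; F / \ker(\phi),
\]
which exhibits $G$ as a quotient of the free group $F$, as required.

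The only real subtlety is justifying the existence of a free group on an arbitrary set $X$, but at this point in the essay the definition is already taken as given, so the proof reduces to a clean three-line application of the universal property followed by the First Isomorphism Theorem. I would not expect any serious obstacle beyond being explicit about why the map $\phi$ is surjective, which is where the choice of $X$ as a generating set is actually used.
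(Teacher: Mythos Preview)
Your proof is correct and follows essentially the same route as the paper: choose a generating set for $G$, extend the inclusion to a homomorphism from the free group on that set via the universal property, observe surjectivity, and apply the First Isomorphism Theorem. The only cosmetic difference is that the paper picks an arbitrary generating subset $A\subseteq G$ rather than defaulting to $X=G$, but this changes nothing in the argument.
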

	\begin{proof}
			 Let $A\subseteq G$ such that $G=\langle A\rangle.$ Then the map $f:A\rightarrow G$ extends to $f': F_A\rightarrow G$, where $F_A$ is a free group on $A$.
			 The group $G=\langle A\rangle$ implies that $f'$ is surjective.
			Hence, by first isomorphism theorem, $F_A/\ker f'\cong G.$ 
	\end{proof}
	\begin{defn}\cite{johnson1997presentations}
		Let $X$ be a set, $F$ the free group on $X$, $R\subseteq X$, $N$ a normal closure of $R$ in $F$, and $G$ the quotient group $F/N$. Then $G=\langle X|R\rangle$ is a \textbf{presentation} of $G$. Elements of $X$ are the generators and elements of $R$ are called the relators.
	\end{defn}
	\begin{defn}\cite{johnson1997presentations}
		Let $G$ be a group. Then $G$ is finitely presented if it has a presentation with both $X$ and $R$ finite.
	\end{defn}
	\begin{rem}\cite{johnson1997presentations}:
		Sometimes, we replace $R$ in $\langle X|R\rangle$ by $R=1$, i.e $\{r=1|r\in R\}$, called defining relations for $G$.
	\end{rem}
	\begin{exa}
		\begin{enumerate}
			\item Let $A$ be a set. Then $\langle A|~\rangle$ is a presentation of the free group of rank $|A|.$
			\item $\langle x,y|x^3,y^4\rangle$ can also be written as $\langle x,y|x^3=1, y^4=1\rangle$.
			\item $\dfrac{\langle a,b| a^4, b^3\rangle}{\langle (ab)^2\rangle}=\langle a,b|a^4, b^3, (ab)^2\rangle = \langle a,b|a^4=b^3=1, ab=b^{-1}a^{-1}\rangle$.
			\item $\mathbb{Z}=\langle x|~\rangle,$ and $\mathbb{Z}_n=\langle x|x^n=1\rangle$. The reason why the group $\langle x|x^n=1\rangle=\mathbb{Z}_n$, is because $\langle x|x^n=1\rangle=\{1,x,x^2,x^3,\dots,x^{n-1}\}=\mathbb{Z}_n.$
		\end{enumerate}
	\end{exa}
	\begin{exa}\cite{holt2013presentation}:
		$\langle x, y |[x,y]\rangle = \langle x, y|xy=yx\rangle\cong\mathbb{Z}\times\mathbb{Z}$.
		In general, if there are $n$ generators, then the group $$\langle x_1,\dots,x_n |[x_i,x_j], 1\le i<j\le n\rangle \cong \mathbb{Z}^n.$$
	\end{exa}
	\begin{pro}\cite{holt2013presentation}
		All groups have presentations.
	\end{pro}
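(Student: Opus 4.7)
The plan is to build a presentation directly out of the material already established, namely Proposition \ref{s2p3} (every group is a quotient of a free group) together with the definition of a presentation as $F/N$ where $N$ is the normal closure of a set of relators.

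First, given an arbitrary group $G$, I would choose a generating set $A \subseteq G$; the simplest universal choice is $A = G$ itself, which guarantees $G = \langle A \rangle$ and avoids any cardinality subtlety. Let $F_A$ denote the free group on $A$. The inclusion $A \hookrightarrow G$ is a set map, so by the universal property of free groups it extends uniquely to a group homomorphism $f' : F_A \to G$, and since $A$ generates $G$, the map $f'$ is surjective. This is exactly the setup used in the proof of Proposition \ref{s2p3}.

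Next, set $R = \ker f' \subseteq F_A$. The key observation is that $\ker f'$ is already a normal subgroup of $F_A$ (kernels of homomorphisms are normal), so its normal closure in $F_A$ is simply $\ker f'$ itself. Therefore, in the notation of the definition of presentation, taking $X = A$ and this $R$ gives $N = \ker f'$, and the quotient $F_A / N$ is by definition $\langle A \mid R \rangle$. The First Isomorphism Theorem (applied to $f'$) then yields
\[
\langle A \mid R \rangle \;=\; F_A / \ker f' \;\cong\; \mathrm{Im}(f') \;=\; G,
\]
which exhibits $\langle A \mid R \rangle$ as a presentation of $G$.

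There is no real obstacle here: the whole content is the observation that one may take the relator set to be the entire kernel, which is automatically normal and hence equals its own normal closure. The only point worth flagging is that the resulting presentation is typically enormous and far from finite — this is why the separate notion of a \emph{finitely presented} group in the preceding definition is a genuine restriction, whereas the present proposition imposes no finiteness on $X$ or $R$.
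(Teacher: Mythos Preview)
Your proof is correct and follows essentially the same route as the paper: choose a generating set $A$ for $G$, use Proposition~\ref{s2p3} to get $G\cong F_A/\ker f'$, and then take the relator set from the kernel to obtain $G\cong\langle A\mid R\rangle$. The only cosmetic difference is that the paper picks $R\subseteq\ker f'$ with $\langle R\rangle=\ker f'$, whereas you take $R=\ker f'$ outright; both choices work for the same reason you spell out, namely that $\ker f'$ is already normal and hence equals its own normal closure.
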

	\begin{proof}
			Let $G$ be a group, $A\subseteq G$ be a set of generators of $G$, and $f':F_A\rightarrow G$. 
			Then by Proposition \ref{s2p3}, we have that $$G\cong F_A/\ker f'.$$
			Let $R\subset \ker f'$ such that $\ker f'=\langle R\rangle$. Then, we have that  $G\cong \dfrac{F_A}{\ker f'}=\dfrac{\langle A\rangle}{\langle R\rangle}=\langle A|R\rangle.$ 
	\end{proof}
	\section{Free Product of Groups}\label{c2s3}
	The operation which takes two groups $H_1$ and $H_2$ to form a new group $H_1\star H_2$ is called free product. It is important in algebraic topology because of Van kampen's theorem, which expresses the fundamental group of the union of two open and path-connected spaces whose intersection is also open and path-connected as a combined free product of the fundamental groups of the spaces.
	\begin{defn}
		Let $G_1$ and $G_2$ be groups. Then the \textbf{free product} of $G_1$ and $G_2$, denoted by $G_1\star G_2$ is defined as 
		$$G_1\star G_2=\{x|x \text{ is a reduced word in the alphabet }G_1\amalg G_2\}.$$
	\end{defn}
	We define the binary operation $(\cdot)$, also know as concatenation, on $G_1\star G_2$ as follows 
	\begin{eqnarray*}
		\cdot : G_1\star G_2\times G_1\star G_2&\rightarrow& G_1\star G_2\\
		(w,z)&\mapsto&w\cdot z=wz.
	\end{eqnarray*}
	\begin{pro}
		$(G_1\star G_2,\cdot)$ is a group.
	\end{pro}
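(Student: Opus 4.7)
The plan is to verify the four group axioms for $(G_1\star G_2,\cdot)$, where the underlying set consists of reduced words in the alphabet $G_1\amalg G_2$. The first step is to make the binary operation precise: given two reduced words $w=w_1\cdots w_n$ and $z=z_1\cdots z_m$, their concatenation $w_1\cdots w_n z_1\cdots z_m$ may fail to be reduced at the seam, so I would define $w\cdot z$ to be the word obtained by the following clean-up: if $w_n$ and $z_1$ lie in the same factor $G_i$, replace them by their product in $G_i$; if that product equals the identity of $G_i$, delete it and recurse inward with $w_{n-1}$ and $z_2$. This yields a reduced word, giving closure, and I would remark that the procedure is independent of the order in which the collisions are resolved.

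The identity and inverse axioms are then easy. The empty word $e$ visibly satisfies $e\cdot w=w\cdot e=w$. For inverses, given $w=w_1\cdots w_n$, the word $w^{-1}:=w_n^{-1}w_{n-1}^{-1}\cdots w_1^{-1}$, with each $w_i^{-1}$ taken in whichever $G_i$ contains $w_i$, is again reduced because consecutive letters come from the same factors as in $w$; concatenating $w$ with $w^{-1}$ then collapses from the center outward and produces $e$, and symmetrically for $w^{-1}\cdot w$.

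The main obstacle is associativity, because the reduction rule is defined piecewise and a direct case analysis on how the seams of $(w\cdot z)\cdot y$ and $w\cdot(z\cdot y)$ interact is combinatorially painful. My preferred route is the standard van der Waerden trick: let $W$ denote the set of reduced words, and for each letter $g\in G_1\cup G_2$ define a permutation $L_g\colon W\to W$ by left-multiplication-with-reduction. One then checks on letters that $L_g\circ L_h=L_{g\cdot h}$ in the two nontrivial cases (same factor versus different factors), extends multiplicatively to obtain a map $\Phi\colon G_1\star G_2\to\mathrm{Sym}(W)$ with $\Phi(w\cdot z)=\Phi(w)\circ\Phi(z)$, and observes that $\Phi$ is injective since $\Phi(w)(e)=w$. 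Because composition in $\mathrm{Sym}(W)$ is associative, associativity of $(\cdot)$ on $G_1\star G_2$ follows immediately, completing the proof.
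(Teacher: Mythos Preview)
Your proof is correct, and on the identity and inverse axioms it matches the paper exactly: the empty word serves as the identity, and the inverse of $w_1\cdots w_n$ is $w_n^{-1}\cdots w_1^{-1}$. The genuine divergence is in how associativity is handled. The paper attempts a direct verification: it picks one representative case for how the seams $x_r y_1$ and $y_n z_1$ might or might not reduce, writes out both $(x\cdot y)\cdot z$ and $x\cdot(y\cdot z)$ in that case, and observes that they coincide. This is short and concrete but, as you anticipated, the full case analysis (including cascading cancellations through the middle word) is not carried out. Your route via the van der Waerden trick---realising each letter as a permutation of the set of reduced words, extending to an injective map $\Phi\colon G_1\star G_2\to \mathrm{Sym}(W)$, and pulling back associativity from composition of permutations---sidesteps that combinatorial bookkeeping entirely and yields a cleaner, complete argument. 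The trade-off is that the paper's approach is more elementary and self-contained, requiring no auxiliary construction, while yours buys rigor at the cost of introducing the permutation representation.
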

	\begin{proof}
		First of all, we have to show that $G_1\star G_2\ne\emptyset$, then we will show that $(G_1\star G_2,\cdot)$ satisfies all the conditions stated in Definition \ref{s2d1}.
			The set $G_1\star G_2\ne \emptyset$ since $G_1$ and $G_2$ are not empty because they are groups.
			Now let $x=x_1\dots x_r,~ y = y_1\dots y_n,$ and $z=z_1\dots z_s\in G_1\star G_2.$ We want to show that $(x\cdot y)\cdot z=x\cdot(y\cdot z)$. This is a bit tricky because we do not know whether $x_r y_1$ and $y_nz_1$ are reduced or not. Without loss of generality, we suppose that $x_ry_1$ is reduced and $y_nz_1$ is not reduced. Let the reduction of $x_r y_1$ be $w$. Then 
			\begin{eqnarray*}
				(x\cdot y)\cdot z=(x_1\dots x_ry_1\dots y_n)\cdot z_1\dots z_s=x_1\dots x_{r-1}wy_2\dots y_nz_1\dots z_s.
			\end{eqnarray*}
		So, we have that
		\begin{eqnarray*}
			x\cdot(y\cdot z)&=&x_1\dots x_r\cdot(y_1\dots y_nz_1\dots z_s)\\
			&=&x_1\dots x_ry_1\dots y_n z_1\dots z_s\\
			&=&x_1\dots x_{r-1}wy_2\dots y_nz_1\dots z_s\\
			&=&(x\cdot y)\cdot z.
		\end{eqnarray*}
		The identity element here is the empty word, $e$, since for any $x\in G_1\star G_2$, $x\cdot e = e\cdot x=x.$
		
		Let $w=w_1\dots w_r\in G_1\star G_2$ be arbitrary, and let $w^{-1}$ be the inverse of $w$. Then by definition,
		$$w\cdot w^{-1}=w_1\dots w_r\cdot w^{-1}=e\implies w^{-1}=w_r^{-1}\dots w_1^{-1}.$$
		
		Thus for any $w=w_1\dots w_r\in G_1\star G_2,$ the inverse of $w$ is $w_r^{-1}\dots w_1^{-1}.$ Hence $G_1\star G_2$ is a group.
	\end{proof}	
	\begin{pro}\cite{allen}
		Let $A$ and $B$ be groups, and let $x\in$ $\mathrm{Tor}(A\star B)$, then $$x=yzy^{-1},$$ where $z\in$ $\mathrm{Tor}(A)$ or $\mathrm{Tor}(B)$, and $y\in A\star B.$\label{s23p3}
	\end{pro}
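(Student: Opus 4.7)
The plan is to proceed by strong induction on the length $r$ of a reduced word representation $x = x_1 x_2 \cdots x_r$ of the given torsion element $x \in \mathrm{Tor}(A\star B)$. The base cases $r = 0$ and $r = 1$ are immediate: if $r = 0$ then $x = e$, and we may take $y = e$ and $z = e$; if $r = 1$ then $x$ is a single letter lying in $A$ or $B$, and since $x$ has finite order, $x$ itself belongs to $\mathrm{Tor}(A)$ or $\mathrm{Tor}(B)$, so again $y = e$ and $z = x$ works.

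For the inductive step, assume $r \ge 2$ and split into two cases according to whether $x_1$ and $x_r$ lie in the same factor. If $x_1$ and $x_r$ lie in \emph{different} factors, then for every $n \ge 1$ the concatenation
$$x^n = (x_1 \cdots x_r)(x_1 \cdots x_r) \cdots (x_1 \cdots x_r)$$
requires no amalgamation at any junction $x_r x_1$, since the adjacent letters come from different groups. Hence $x^n$ is already a reduced word of length $rn \ge 2$, so $x^n \ne e$ for all $n \ge 1$, contradicting that $x$ is torsion. Thus this subcase produces no torsion element with $r \ge 2$.

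If instead $x_1$ and $x_r$ lie in the \emph{same} factor, I would conjugate by $x_1$ to obtain
$$x' \;=\; x_1^{-1}\, x\, x_1 \;=\; x_2 x_3 \cdots x_{r-1}\,(x_r x_1).$$
Because $x_r$ and $x_1$ belong to the same group, their product $w = x_r x_1$ is a single letter in that group. If $w \ne e$, the word $x_2 \cdots x_{r-1} w$ is reduced (the pair $x_{r-1} w$ straddles opposite factors, since $x_{r-1}$ was opposite to $x_r$) and has length $r - 1$; if $w = e$, the word collapses further to $x_2 \cdots x_{r-1}$, which remains reduced as a subword of $x$ and has length $r - 2$. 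In either case the reduced length of $x'$ is strictly less than $r$, and $x'$ is torsion because conjugation preserves order. By the induction hypothesis, $x' = y_0 z y_0^{-1}$ with $z \in \mathrm{Tor}(A) \cup \mathrm{Tor}(B)$ and $y_0 \in A\star B$, whence $x = x_1 x' x_1^{-1} = (x_1 y_0)\, z\, (x_1 y_0)^{-1}$, as required.

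The main obstacle is the first case, which tacitly uses the normal form theorem for free products: one must know that a nonempty reduced word cannot represent the identity element of $A\star B$. Without that fact one cannot rule out cancellation collapsing $x^n$ to the trivial word. The second case is essentially bookkeeping about whether one or two letters are absorbed after conjugation, but the reducedness of the subword in each sub-subcase should be verified explicitly to keep the induction valid.
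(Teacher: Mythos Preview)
Your proof is correct and follows the same strong induction on reduced word length that the paper uses. You are in fact more careful than the paper: where the paper asserts directly that $w^q = e$ forces $w_n w_1 = e$ (and hence drops the length by two), you treat separately the possibilities that $x_1,x_r$ lie in different factors and that $x_r x_1$ is a nontrivial single letter, which is exactly the justification the paper's claim needs.
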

	This means that all torsion elements of the group $A\star B$ are conjugates of torsion elements of $A$ or $B$.
	\begin{proof}
		We will prove this by induction on the length of  the word. 
			It is true for empty word( that is, a word of length 0), since $$e_T=e_{G}ee_G^{-1},$$
			where $e_T$ is the empty word in $\mathrm{Tor}(A\star B)$, $e_G$ is the empty word in $A\star B$, and $e$ is the empty word in $\mathrm{Tor}(A)$ or $\mathrm{Tor}(B)$.
			
			It is also true for a word of length one since, for example if $w_1\in$ $\mathrm{Tor}(A\star B)$ is a word of length one, then $w_1\in A$ or $B$, and $$w_1^p=e_G,\text{ for some $p\in \mathbb{N}$.}$$
			
			Also $w_1$ can be written as $e_Gw_1e_G^{-1}.$ Since $w_1^p=e_G$, then we have that $w_1\in$ $\mathrm{Tor}(A)$ or $\mathrm{Tor}(B).$
			\newpage			
			Now, we assume that $w=w_1\dots w_n\in$ $\mathrm{Tor}(A\star B)$ is of length $n>1$ and that the statement holds for all words of length $k<n.$ Since $w\in$ $\mathrm{Tor}(A\star B)$, then $$w^q=e_G,\text{ for  some $q\in\mathbb{N}.$}$$ This implies that $w_nw_1=e\implies w_n=w_1^{-1}$.
			Hence $w$ must be of the form $w_1w_2\dots w_{n-1}w_1^{-1}.$
			But then $w_2\dots w_{n-1}$ is a word of length $n-2<n$. Hence by induction hypothesis,
			$$w_2\dots w_{n-1}=vgv^{-1},\text{ for some $g\in$ $\mathrm{Tor}(A)$ or $\mathrm{Tor}(B)$, and $v\in A\star B.$}$$
			\item Thus $w=w_1vgv^{-1}w_1^{-1}=w_1vg(w_1v)^{-1}.$ Hence the statement holds.
	\end{proof}
	Having studied these concepts in group theory, we now discuss some important concepts in algebraic topology which would be needed to compute the fundamental group of torus knots which we will see in the next chapter. 
	\section{Introduction to Fundamental Group}\label{c2s4}
	This section gives an introduction to the fundamental groups of a topological spaces. We begin by introducing the terminologies adopted and then a few general result is stated.
	\begin{defn}
		Let $X$ and $Y$ be topological spaces, and let $f:X\rightarrow Y$ be a bijection. If both $f$ and the inverse of $f$, $f^{-1}$, are continuous, then $X$ is said to be homeomorphic to $Y$, written as $X\cong Y$.
	\end{defn}
	The condition that $f$ is continuous means that for all open set $U$ of $Y$, $f^{-1}(U)$ is open in $X$. 
	\begin{pro}\cite{chris}:
		Let $p\in S^n$ be $(0,0,\dots,1)\in\mathbb{R}^{n+1}$, where $S^n=\{(x_1,\dots,x_{n+1})\in\mathbb{R}^{n+1}|x_1^2+\dots+x_{n+1}^2=1\}$. Then the stereographic projection $\phi:S^n\setminus p \rightarrow\mathbb{R}^n$ given by $$\phi(x_1,\dots,x_n,x_{n+1})=\frac{1}{1-x_{n+1}}(x_1,\dots,x_n),$$ is a homeomorphism.\label{s2p5}
	\end{pro}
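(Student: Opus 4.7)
The plan is to exhibit an explicit two-sided inverse for $\phi$ and then observe that both $\phi$ and that inverse are continuous because they are built, coordinate by coordinate, out of polynomials and reciprocals of nowhere-vanishing polynomials.

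First I would check that $\phi$ is well defined and continuous. The only point where the formula could fail is where $1-x_{n+1}=0$, i.e. at $p$, which has been removed, so each component of $\phi$ is a rational function in $(x_1,\dots,x_{n+1})$ with nonzero denominator on $S^n\setminus p$. Continuity then follows from the fact that rational functions are continuous where their denominators do not vanish.

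Next I would construct $\phi^{-1}\colon\mathbb{R}^n\to S^n\setminus p$. Given $y=(y_1,\dots,y_n)\in\mathbb{R}^n$, I want $x=(x_1,\dots,x_{n+1})\in S^n\setminus p$ with $x_i=(1-x_{n+1})y_i$ for $i\le n$. Substituting into $\sum_{i=1}^{n+1}x_i^2=1$ gives $(1-x_{n+1})^2\,\lVert y\rVert^2+x_{n+1}^2=1$, and using $1-x_{n+1}\ne 0$ I can cancel a factor of $1-x_{n+1}$ to solve
\begin{equation*}
x_{n+1}=\frac{\lVert y\rVert^2-1}{\lVert y\rVert^2+1},\qquad x_i=\frac{2y_i}{\lVert y\rVert^2+1}\ (i\le n).
\end{equation*}
This defines a candidate map $\psi(y)$; its image lies in $S^n$ by direct computation, and $x_{n+1}<1$ for all $y$, so $\psi(y)\in S^n\setminus p$. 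Since $\lVert y\rVert^2+1$ never vanishes, $\psi$ is continuous.

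Finally I would verify $\phi\circ\psi=\mathrm{id}_{\mathbb{R}^n}$ and $\psi\circ\phi=\mathrm{id}_{S^n\setminus p}$. For $\phi\circ\psi$, a short computation using $1-x_{n+1}=2/(\lVert y\rVert^2+1)$ recovers $y$; for $\psi\circ\phi$, setting $y=\phi(x)$ and using $\lVert y\rVert^2=\lVert x\rVert^2/(1-x_{n+1})^2=(1-x_{n+1}^2)/(1-x_{n+1})^2=(1+x_{n+1})/(1-x_{n+1})$ lets one simplify $\lVert y\rVert^2\pm 1$ and recover $x$. With both compositions equal to the identity, $\psi=\phi^{-1}$, both maps are continuous, and therefore $\phi$ is a homeomorphism. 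The only mildly delicate step is the algebraic manipulation that produces $\psi$ and confirms the two composition identities; everything else is routine.
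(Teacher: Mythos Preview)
Your proof is correct and complete: the explicit inverse you construct is the standard one, the continuity arguments are sound, and the algebraic verifications of both compositions are right. One minor notational quibble: when you write $\lVert y\rVert^2=\lVert x\rVert^2/(1-x_{n+1})^2$, the symbol $\lVert x\rVert^2$ there should really denote $\sum_{i=1}^n x_i^2$ rather than the full $(n{+}1)$-dimensional norm (which equals $1$); your next equality makes clear you mean the former, so no harm is done.

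As for comparison with the paper: the paper does not supply its own proof of this proposition---it simply states the result with a citation and moves on. Your argument is the standard direct verification and would serve perfectly well as the missing proof.
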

	\subsection{Homotopy Type}
	In general topology, $X=Y$ if $X\cong Y$, where $X$ and $Y$ are topological spaces  but in algebraic topology, we have a weaker notion of equality called homotopy equivalence, denoted by $``\simeq"$. One reads $X\simeq Y$ as $X$ is homotopy equivalent to $Y$.
	\begin{defn}\cite{weng}
		Let $X$ and $Y$ be topological spaces and let $f,g:X\rightarrow Y$ be continuous maps. We say that $f$ is \textbf{homotopic} to $g$ and we write $f\sim g$, if there exists a continuous map $F:X\times [0,1]\rightarrow Y$ such that $$F(x,0)=f(x)\text{ and }F(x,1)=g(x), \text{ for all } x\in X.$$
	\end{defn}
	The continuous map $F$ is called a homotopy from $f$ to $g$. We can consider a homotopy as a continuous deformation of the map $f$ to the map $g$, as $t$ denotes the time from 0 to 1.
	\begin{defn}
		Let  $X$ and $Y$ be topological spaces, and let $f:X\rightarrow Y.$ We say that $f$ is  \textbf{homotopy equivalence} if there exists another map $g:Y\rightarrow X$ such that $gf\sim \mathrm{Id}_X$ and $fg\sim \mathrm{Id}_Y.$
	\end{defn}
	The spaces $X$ and $Y$ are homotopy equivalence (or have the same homotopy) if there is a homotopy equivalent between them, and we write $X\simeq Y.$
	\begin{exa}
		The unit disk $D^n\simeq \{(0,0,\dots,0)\},~n\ge 0$.		
		 To see this, we define the map $f:D^n\rightarrow \{(0,0,\dots,0)\}$ by $f(x,y)=(0,0,\dots,0)$, and $g:\{(0,0,\dots,0)\}\rightarrow D^2$ by $g(0,0,\dots,0)=(0,0,\dots,0)$. 
		 Of course $fg=\mathrm{Id}_{\{(0,0,\dots,0)\}}$ because $fg(0,0,\dots,0)=f((0,0,\dots,0))=(0,0,\dots,0).$ 
		  We define the homotopy from $gf$ to $\mathrm{Id}_{D^n}$ by   $F(x,t)=(1-t)gf(x)+tx,$ for all $x\in D^n$. 
		  
		 Since $F(x,0)=gf(x)$, and $F(x,1)=x=\mathrm{Id}_{D^n}(x)$, then we have that $gf\simeq \mathrm{Id}_{D^n}$. Therefore $D^n\simeq \{(0,0,\dots,0)\}.$
		 
		 Similarly, for all $n\ge 0$, $\mathbb{R}^n\simeq\{(0,0,\dots,0)\}.$ \label{c2e1}
	\end{exa} 
	\begin{defn}
		A topological space $Y$ is \textbf{contractible} if it is homotopy equivalent to a point, $pt.$
	\end{defn}
	As we have seen in Example \ref{c2e1} above, $D^n\simeq pt=\{(0,0\dots,0)\}$. Hence the space $D^n$ is contractible. Similarly, $\mathbb{R}^n$ is contractible for all $n\ge 0.$
	\begin{defn}
		Let $Y$ be a topological space. A \textbf{path} in $Y$ is a continuous map $\alpha :[0,1]\rightarrow Y$. The starting point is $\alpha(0)$ while the ending point is $\alpha(1)$. If $\alpha(0)=\alpha(1)$, the path becomes a \textbf{loop}.
	\end{defn}
	\begin{defn}
		Let $Y$ be a topological space and let $\alpha,\beta:[0,1]\rightarrow Y$ be paths in $Y$ such that $\alpha(1)=\beta(0)$. Then the path $\alpha\cdot\beta$ defined by 
		$$
		(\alpha\cdot\beta)(s)=\begin{cases}
			\alpha(2s) \text{ if } s\in[0,\frac{1}{2}]\\
			\beta(2s-1) \text{ if } s\in[\frac{1}{2},1].
		\end{cases}
		$$ is called the \textbf{composition} of $\alpha$ and $\beta.$ Geometrically, $\alpha\cdot\beta$ can be viewed as the concatenation of paths $\alpha$ and $\beta.$
	\end{defn}
	\begin{defn}
		Let $y\in Y$. The path $c_y:[0,1]\rightarrow Y$ defined by $c_y(s)=y,~\forall s\in [0,1]$ is called the {constant} path at $y$.
	\end{defn}
	\begin{lem}\cite{paulst}:
		Let $x,y\in X$, and let $\alpha:[0,1]\to X$ be a path such that $\alpha(0)=x$, and $\alpha(1)=y.$ Then $\alpha\sim c_x\cdot \alpha$ and $\alpha\sim \alpha\cdot c_y$. \label{c2l2411}
	\end{lem}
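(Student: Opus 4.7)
The plan is to prove both homotopies by writing down an explicit reparametrization. The underlying geometric idea is that $c_x \cdot \alpha$ differs from $\alpha$ only in that it "pauses" at $x$ during the first half of the parameter interval before traversing $\alpha$ at double speed; a homotopy from $\alpha$ to $c_x \cdot \alpha$ should gradually lengthen that initial pause from length $0$ to length $\tfrac{1}{2}$.

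Concretely, I would define $F : [0,1] \times [0,1] \to X$ by
\[
F(s,t) = \begin{cases} x & \text{if } 0 \le s \le t/2, \\[4pt] \alpha\!\left(\dfrac{2s - t}{2 - t}\right) & \text{if } t/2 \le s \le 1. \end{cases}
\]
First I would check the two boundary conditions: at $t = 0$ the formula gives $F(s,0) = \alpha(s)$, and at $t = 1$ it gives $x$ on $[0, 1/2]$ and $\alpha(2s - 1)$ on $[1/2, 1]$, which is exactly $(c_x \cdot \alpha)(s)$ by definition. Next I would verify continuity of $F$: on each of the closed pieces $\{(s,t) : s \le t/2\}$ and $\{(s,t) : s \ge t/2\}$ the formula is continuous (the second because $(2s - t)/(2 - t) \in [0,1]$ for $(s,t)$ in that region and $\alpha$ is continuous), and on their common boundary $s = t/2$ both expressions equal $\alpha(0) = x$, so the pasting (gluing) lemma gives continuity on $[0,1]^2$.

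For the second homotopy $\alpha \sim \alpha \cdot c_y$ I would do the symmetric construction, defining
\[
G(s,t) = \begin{cases} \alpha\!\left(\dfrac{2s}{2 - t}\right) & \text{if } 0 \le s \le (2 - t)/2, \\[4pt] y & \text{if } (2 - t)/2 \le s \le 1, \end{cases}
\]
and carry out the same three checks: $G(s,0) = \alpha(s)$, $G(s,1) = (\alpha \cdot c_y)(s)$, and continuity via the pasting lemma (the two pieces agree on $s = (2 - t)/2$, both giving $\alpha(1) = y$).

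There is no deep obstacle here; the only point that needs real attention is making sure the reparametrization $\frac{2s - t}{2 - t}$ is well-defined and lands in $[0,1]$ throughout the relevant region, and that the two pieces of the piecewise definition match on the diagonal $s = t/2$ so that continuity actually holds. Once the formulas are written correctly, the verifications are routine substitutions.
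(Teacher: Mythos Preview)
Your proof is correct. The paper itself does not supply a proof of this lemma; it merely states it with a citation to \cite{paulst}, so there is no ``paper's own proof'' to compare against. Your explicit reparametrization homotopies are the standard ones and work as claimed.

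One small point worth adding: the paper's definition of path homotopy requires the endpoints to be held fixed throughout, i.e.\ $F(0,t)=x$ and $F(1,t)=y$ for all $t$. You verify the $t=0$ and $t=1$ slices and continuity, but do not explicitly check these endpoint conditions. They do hold for your formulas --- at $s=0$ one has $0\le t/2$ so $F(0,t)=x$, and at $s=1$ the second branch gives $\alpha\bigl(\tfrac{2-t}{2-t}\bigr)=\alpha(1)=y$; similarly for $G$ --- but since the paper's definition singles them out, it would be cleaner to state them.
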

	\begin{defn}
		Let $\alpha:[0,1]\rightarrow Y$ be a path. The path $\bar{\alpha}:[0,1]\rightarrow Y$ defined as $\bar{\alpha}(s)=\alpha(1-s),$ for all $s\in [0,1]$ is called \textbf{inverse path} of $\alpha.$
	\end{defn}
	The fundamental group is defined in terms of loops and deformation of loops. More generally, it is useful sometimes to consider paths and deformation of paths. The idea of deforming a path continuously in which its endpoint is fixed is given in the following definition.
	\begin{defn}
		Let $\alpha,\beta:[0,1]\rightarrow Y$ be paths such that $\alpha(0)=\beta(0)$ and $\alpha(1)=\beta(1)$. Then $\alpha$ is homotopic to $\beta$ if there exists a continuous map $F:[0,1]\times [0,1]\rightarrow Y$ such that 
		\begin{itemize}
			\item $F(s,0)=\alpha(s)$ and $F(s,1)=\beta(s), ~\forall s.$
			\item $F(0,t)=\alpha(0)$ and $F(1,t)=\alpha(1),~\forall t.$
		\end{itemize}	
	\end{defn}
	\begin{lem}\cite{paulst}:
		Let $\alpha, \beta, \alpha', \beta': [0,1]\to X$ be paths such that $\alpha(0)=\alpha'(0),~\alpha(1)=\alpha'(1)=\beta(0)=\beta'(0),$ and $\beta(1)=\beta'(1)$. If $\alpha\sim \alpha'$ and $\beta\sim\beta'$, then $\alpha\cdot\beta\sim \alpha'\cdot\beta'.$ \label{c2l2413}
	\end{lem}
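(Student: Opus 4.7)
The plan is to construct an explicit homotopy $H:[0,1]\times[0,1]\to X$ from $\alpha\cdot\beta$ to $\alpha'\cdot\beta'$ by splicing together the given homotopies. Let $F:[0,1]\times[0,1]\to X$ be a (path) homotopy from $\alpha$ to $\alpha'$ and $G:[0,1]\times[0,1]\to X$ a homotopy from $\beta$ to $\beta'$, provided by the hypotheses $\alpha\sim\alpha'$ and $\beta\sim\beta'$. By definition, $F$ and $G$ fix the endpoints of the paths, i.e.
\[
F(0,t)=\alpha(0),\quad F(1,t)=\alpha(1),\quad G(0,t)=\beta(0),\quad G(1,t)=\beta(1),\quad\forall t\in[0,1].
\]

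First I would define $H$ piecewise, mimicking the formula for path composition in the $s$-direction while carrying the homotopy parameter in $t$:
\[
H(s,t)=\begin{cases} F(2s,t), & s\in[0,\tfrac{1}{2}],\\ G(2s-1,t), & s\in[\tfrac{1}{2},1]. \end{cases}
\]
The next step is to check that $H$ is well defined on the overlap $s=\tfrac{1}{2}$: on the left piece the value is $F(1,t)=\alpha(1)$ and on the right piece it is $G(0,t)=\beta(0)$, and these agree by the hypothesis $\alpha(1)=\beta(0)$. Continuity of $H$ then follows from the pasting lemma applied to the two closed sets $[0,\tfrac{1}{2}]\times[0,1]$ and $[\tfrac{1}{2},1]\times[0,1]$, on each of which $H$ is a composition of continuous maps.

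Finally I would verify the four boundary conditions. At $t=0$, unwinding the definition gives $H(s,0)=(\alpha\cdot\beta)(s)$, because $F(2s,0)=\alpha(2s)$ and $G(2s-1,0)=\beta(2s-1)$. Similarly $H(s,1)=(\alpha'\cdot\beta')(s)$, using $\alpha(1)=\alpha'(1)$ and $\beta(0)=\beta'(0)$ to ensure the endpoints match up correctly. For the endpoint conditions, $H(0,t)=F(0,t)=\alpha(0)=(\alpha\cdot\beta)(0)$ and $H(1,t)=G(1,t)=\beta(1)=(\alpha\cdot\beta)(1)$, both independent of $t$, as required.

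The only subtle point, and hence the main obstacle, is the matching at $s=\tfrac{1}{2}$: one must use the endpoint-preserving property of the path homotopies $F$ and $G$ (not merely that they are homotopies of continuous maps) together with the chain of equalities $\alpha(1)=\alpha'(1)=\beta(0)=\beta'(0)$ given in the statement. Once this compatibility is in place, the pasting lemma finishes the argument and $H$ serves as the required homotopy from $\alpha\cdot\beta$ to $\alpha'\cdot\beta'$.
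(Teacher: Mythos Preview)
Your proof is correct and is the standard argument: splice the two given path homotopies side by side and use the pasting lemma, with the endpoint-preserving property ensuring the pieces match along $s=\tfrac12$. The paper does not actually supply a proof of this lemma; it merely states it with a citation to \cite{paulst}, so there is nothing to compare against beyond noting that your argument is exactly the expected one.
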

	\begin{lem}\cite{paulst}:
		Let $y\in Y$ and let $\alpha :[0,1]\rightarrow Y$ be a path such that $\alpha(0)=x$. Then $\alpha\cdot\bar{\alpha}\sim c_x$ and $\bar{\alpha}\cdot \alpha\sim c_x$.\label{c2l2415} 
	\end{lem}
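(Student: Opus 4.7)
The plan is to construct an explicit homotopy between the loop $\alpha\cdot\bar{\alpha}$ and the constant loop $c_x$. The geometric idea is straightforward: for each parameter $t\in[0,1]$, consider the loop that walks along $\alpha$ only up to time $1-t$ and then immediately retraces its way back to $x$. At $t=0$ this is exactly $\alpha\cdot\bar{\alpha}$, and at $t=1$ the loop never leaves $x$, giving $c_x$.

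Concretely, I would define $F:[0,1]\times[0,1]\to Y$ by
\begin{equation*}
F(s,t)=\begin{cases} \alpha\bigl(2s(1-t)\bigr) & \text{if } s\in[0,1/2],\\ \alpha\bigl(2(1-s)(1-t)\bigr) & \text{if } s\in[1/2,1]. \end{cases}
\end{equation*}
The verification then breaks into a few routine checks: (i) at $s=1/2$ both formulas give $\alpha(1-t)$, so $F$ is well defined; (ii) each piece is a composition of continuous maps, and they agree on the intersection of the closed sets $[0,1/2]\times[0,1]$ and $[1/2,1]\times[0,1]$, so the pasting lemma gives continuity of $F$ on the whole square; (iii) setting $t=0$ recovers $\alpha(2s)$ on $[0,1/2]$ and $\alpha(2-2s)=\bar{\alpha}(2s-1)$ on $[1/2,1]$, which is precisely $(\alpha\cdot\bar{\alpha})(s)$; (iv) setting $t=1$ gives $\alpha(0)=x$ everywhere, which is $c_x(s)$; and (v) $F(0,t)=F(1,t)=\alpha(0)=x$ for every $t$. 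These five checks together establish $\alpha\cdot\bar{\alpha}\sim c_x$.

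For the second statement, $\bar{\alpha}\cdot\alpha\sim c_{\alpha(1)}$, I would not redo the construction from scratch. Instead, I would apply the first part to the path $\beta:=\bar{\alpha}$, which starts at $\alpha(1)$ and satisfies $\bar{\beta}=\overline{\bar{\alpha}}=\alpha$. The first part applied to $\beta$ then yields $\bar{\alpha}\cdot\alpha=\beta\cdot\bar{\beta}\sim c_{\beta(0)}=c_{\alpha(1)}$, which is what the lemma asserts.

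The only real obstacle is the continuity check, which depends on the pasting lemma together with the agreement at $s=1/2$; one must state this carefully rather than waving at it. Beyond that, the whole argument is essentially a picture (shrink the "turnaround point" of the loop continuously to $x$) encoded in explicit formulas, so I expect no conceptual difficulties and the main care will be in writing the piecewise definition cleanly and tracking each of the boundary conditions.
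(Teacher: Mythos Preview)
Your construction is correct and is the standard explicit homotopy for this result; the five checks you list are exactly what is needed, and your reduction of the second statement to the first via $\beta=\bar{\alpha}$ is clean. Note that the paper does not actually prove this lemma---it is stated with a citation and no proof---so there is nothing to compare against. One small point worth mentioning: the lemma as written in the paper claims $\bar{\alpha}\cdot\alpha\sim c_x$, which is only correct if $\alpha$ is a loop at $x$; you have (rightly) interpreted and proved the correct version $\bar{\alpha}\cdot\alpha\sim c_{\alpha(1)}$, and you might flag this discrepancy explicitly in your write-up.
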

	Let $Y$ be a topological space and let $y\in Y$ be the basepoint of a loop $\alpha$. We define $\pi_1(Y,y)$ to be the set of homotopy classes $[\alpha]$ of loops in $Y$ at $y$.
	\begin{lem}\cite{paulst}:
		Let $X$ be a topological space, and let $\alpha, \beta, \gamma:[0,1]\to X$ be paths in $X$ such that $\alpha(1)=\beta(0),$ and  $\beta(1)=\gamma(0).$ Then $\alpha\cdot(\beta\cdot\gamma)\sim(\alpha\cdot\beta)\cdot\gamma$.\label{c2e2415}
	\end{lem}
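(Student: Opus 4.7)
The plan is to prove associativity up to homotopy by constructing an explicit homotopy $F:[0,1]\times[0,1]\to X$ between $(\alpha\cdot\beta)\cdot\gamma$ and $\alpha\cdot(\beta\cdot\gamma)$. The underlying geometric idea is that both composite paths traverse $\alpha$, then $\beta$, then $\gamma$; they differ only in how the parameter interval $[0,1]$ is subdivided among the three subpaths. For $(\alpha\cdot\beta)\cdot\gamma$, the subintervals are $[0,\tfrac14]$, $[\tfrac14,\tfrac12]$, $[\tfrac12,1]$, while for $\alpha\cdot(\beta\cdot\gamma)$, they are $[0,\tfrac12]$, $[\tfrac12,\tfrac34]$, $[\tfrac34,1]$. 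A homotopy can be built by sliding these two break points linearly from the first configuration to the second as $t$ runs from $0$ to $1$.

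Concretely, I would define $F(s,t)$ piecewise on three regions of the square $[0,1]\times[0,1]$, separated by the line segments $s=\tfrac{1+t}{4}$ and $s=\tfrac{2+t}{4}$. On the left region I reparametrise $\alpha$ so that it is traversed on $[0,\tfrac{1+t}{4}]$; on the middle strip $\beta$ is traversed on $[\tfrac{1+t}{4},\tfrac{2+t}{4}]$ (a unit-length interval in $s$ after rescaling); and on the right region $\gamma$ is traversed on $[\tfrac{2+t}{4},1]$. Setting $t=0$ recovers $(\alpha\cdot\beta)\cdot\gamma$ and setting $t=1$ recovers $\alpha\cdot(\beta\cdot\gamma)$. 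The endpoint conditions $F(0,t)=\alpha(0)$ and $F(1,t)=\gamma(1)$ hold by construction, so $F$ is a path-homotopy rel endpoints.

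The key verifications, which I would carry out in order, are: (i) show that the three pieces agree on the dividing curves $s=\tfrac{1+t}{4}$ and $s=\tfrac{2+t}{4}$, using $\alpha(1)=\beta(0)$ and $\beta(1)=\gamma(0)$; (ii) conclude continuity of $F$ on all of $[0,1]\times[0,1]$ via the pasting lemma applied to the three closed regions; and (iii) check $F(s,0)$ and $F(s,1)$ against the explicit piecewise formulas for $(\alpha\cdot\beta)\cdot\gamma$ and $\alpha\cdot(\beta\cdot\gamma)$.

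The main obstacle, though minor, is bookkeeping: getting the reparametrisations of $\alpha$ and $\gamma$ exactly right so that each subpath is traversed at unit speed over its allotted subinterval and the matching conditions at the two moving break points hold identically in $t$. Once the correct affine reparametrisations $s\mapsto \tfrac{4s}{1+t}$ and $s\mapsto \tfrac{4s-2-t}{2-t}$ are in place, continuity and the required endpoint values follow immediately, and the homotopy class identity $\alpha\cdot(\beta\cdot\gamma)\sim(\alpha\cdot\beta)\cdot\gamma$ is established.
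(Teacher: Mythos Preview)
Your proposal is correct and is the standard textbook construction of the associativity homotopy. Note, however, that the paper does not actually supply a proof of this lemma: it is stated with a citation to \cite{paulst} and used as a black box in the verification that $\pi_1(Y,y)$ is a group, so there is no in-paper argument to compare against. Your explicit sliding-breakpoint homotopy is exactly the proof one finds in the cited source and in Hatcher, so nothing is missing.
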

	\begin{pro}\cite{allen, paulst}:
		The set $\pi_1(Y,y)$ is a group with respect to the operation $[\alpha]\cdot [\beta]=[\alpha\cdot\beta]$.
	\end{pro}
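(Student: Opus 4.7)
The plan is to verify the four group axioms in turn, using the preceding lemmas as the technical input so that this proof becomes essentially a bookkeeping exercise. Throughout, every loop considered is based at $y$, so compositions and inverses of loops remain loops at $y$, and the operation $[\alpha]\cdot[\beta]=[\alpha\cdot\beta]$ at least makes syntactic sense on representatives.

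First I would check that the operation is well-defined on homotopy classes. Given $[\alpha]=[\alpha']$ and $[\beta]=[\beta']$, the relations $\alpha(0)=\alpha'(0)=\beta(0)=\beta'(0)=\alpha(1)=\alpha'(1)=\beta(1)=\beta'(1)=y$ hold automatically since all loops are based at $y$, so Lemma \ref{c2l2413} applies directly and yields $\alpha\cdot\beta\sim\alpha'\cdot\beta'$, i.e.\ $[\alpha\cdot\beta]=[\alpha'\cdot\beta']$. Associativity then follows immediately by applying Lemma \ref{c2e2415} to three loops $\alpha,\beta,\gamma$ at $y$ and passing to homotopy classes.

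For the identity element, I would propose $[c_y]$, the class of the constant loop at $y$. Lemma \ref{c2l2411}, applied with $x=y$ (so the loop starts and ends at $y$), gives $\alpha\sim c_y\cdot\alpha$ and $\alpha\sim\alpha\cdot c_y$ for every loop $\alpha$ at $y$, hence $[c_y]\cdot[\alpha]=[\alpha]=[\alpha]\cdot[c_y]$. For inverses, given a loop $\alpha$ at $y$, I would take $[\bar\alpha]$ as the candidate inverse of $[\alpha]$; since $\bar\alpha$ is again a loop at $y$, this is a legitimate element of $\pi_1(Y,y)$, and Lemma \ref{c2l2415} (again with $x=y$) gives $\alpha\cdot\bar\alpha\sim c_y$ and $\bar\alpha\cdot\alpha\sim c_y$, hence $[\alpha]\cdot[\bar\alpha]=[c_y]=[\bar\alpha]\cdot[\alpha]$.

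Since every axiom has been encapsulated in the lemmas above, there is no genuine obstacle; the only points that require a moment of care are (i) observing that the hypotheses of Lemmas \ref{c2l2411}, \ref{c2l2413}, \ref{c2l2415} and \ref{c2e2415} on matching endpoints are automatically satisfied because every path in sight is a loop based at $y$, and (ii) noting that the operation does land back in $\pi_1(Y,y)$, that is, $\alpha\cdot\beta$ and $\bar\alpha$ are themselves loops at $y$, which is immediate from the definitions of composition and inverse path.
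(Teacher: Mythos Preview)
Your proof is correct and follows essentially the same approach as the paper: both verify well-definedness via Lemma~\ref{c2l2413}, associativity via Lemma~\ref{c2e2415}, identity $[c_y]$ via Lemma~\ref{c2l2411}, inverses $[\bar\alpha]$ via Lemma~\ref{c2l2415}, and note closure since compositions of loops at $y$ are again loops at $y$. If anything, you are slightly more explicit than the paper in checking that the endpoint hypotheses of the lemmas are met.
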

	\begin{proof}
		First of all, let us show that the product $[\alpha]\cdot[\beta]=[\alpha\cdot\beta]$ is well defined. Suppose that $\alpha, \alpha', \beta, \beta'$ are loops in $Y$ such that $\alpha\sim\alpha'$ and $\beta\sim\beta'$. Then by Lemma \ref{c2l2413}, we have that $\alpha\cdot\beta\sim\alpha'\cdot\beta'$. Hence the product $[\alpha]\cdot[\beta]=[\alpha\cdot\beta]$ is well defined. 
		
		Let $[\alpha], [\beta]\in\pi_1(Y,y)$. Then $[\alpha]\cdot[\beta]=[\alpha\cdot\beta]\in \pi_1(Y,y)$ since by definition of composition, $\alpha\cdot\beta$ is a loop in $Y.$ 
		
		The identity element is $[c_y]$ by Lemma \ref{c2l2411}. 
		Also, for any $[\alpha]\in\pi_1(Y,y)$, the inverse of $[\alpha]$ is $[\bar{\alpha}]$ because $\alpha\cdot\bar{\alpha}\sim c_y$ and $\bar{\alpha}\cdot\alpha\sim c_y$ by Lemma \ref{c2l2415}.
		
		It is only left for us to show associativity. From Lemma \ref{c2e2415}, we have that $\alpha\cdot(\beta\cdot\gamma)\sim (\alpha\cdot\beta)\cdot\gamma$. This implies that $[\alpha\cdot(\beta\cdot\gamma)]=[(\alpha\cdot\beta)\cdot\gamma].$
		Hence 
		$$([\alpha]\cdot[\beta])\cdot[\gamma]=([\alpha\cdot\beta])\cdot[\gamma]=[(\alpha\cdot\beta)\cdot\gamma]=[\alpha\cdot(\beta\cdot\gamma)]=[\alpha]\cdot([\beta\cdot\gamma])=[\alpha]\cdot([\beta]\cdot[\gamma]).$$
		Therefore the group operation is associative. Hence, $\pi_1(Y,y)$ is a group.
	\end{proof}
	The group $\pi_1(Y,y)$ is called the fundamental group of $Y$ relative to the base point $y$.
	\begin{pro}\cite{allen, paulst}:
		Let $X$ and $Y$ be topological spaces, and let $(x,y)\in (X,Y)$. Then $$\pi_1(X\times Y)\cong \pi_1(X)\times \pi_1(Y).\label{c2p2417}$$
	\end{pro}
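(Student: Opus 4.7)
The plan is to use the universal property of the product topology: a continuous map into $X\times Y$ is the same data as a pair of continuous maps, one into $X$ and one into $Y$. Concretely, let $p_X:X\times Y\to X$ and $p_Y:X\times Y\to Y$ be the projections, and for a loop $\gamma:[0,1]\to X\times Y$ based at $(x,y)$, set $\gamma_X=p_X\circ\gamma$ and $\gamma_Y=p_Y\circ\gamma$. These are loops in $X$ at $x$ and in $Y$ at $y$ respectively. Conversely, given loops $\alpha$ in $X$ at $x$ and $\beta$ in $Y$ at $y$, the map $s\mapsto(\alpha(s),\beta(s))$ is a loop in $X\times Y$ at $(x,y)$, continuous by the universal property.

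I would then define
\[
\Phi:\pi_1(X\times Y,(x,y))\longrightarrow \pi_1(X,x)\times\pi_1(Y,y),\qquad \Phi([\gamma])=([\gamma_X],[\gamma_Y]),
\]
and prove it is an isomorphism in four short steps. First, $\Phi$ is well defined: if $F:[0,1]\times[0,1]\to X\times Y$ is a homotopy between loops $\gamma$ and $\gamma'$, then $p_X\circ F$ and $p_Y\circ F$ are homotopies of the projected loops. Second, $\Phi$ is a homomorphism, because the formula for the composition $\gamma\cdot\delta$ in the definition of composition of paths is applied coordinatewise, so $p_X\circ(\gamma\cdot\delta)=(p_X\circ\gamma)\cdot(p_X\circ\delta)$, and similarly for $p_Y$.

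For bijectivity, injectivity follows by combining homotopies: if $\gamma_X\sim c_x$ via $F_X$ and $\gamma_Y\sim c_y$ via $F_Y$, then the pair $F(s,t)=(F_X(s,t),F_Y(s,t))$ is a continuous homotopy from $\gamma$ to the constant loop $c_{(x,y)}$, showing $[\gamma]$ is trivial whenever $\Phi([\gamma])$ is. Surjectivity is immediate from the construction above: given $([\alpha],[\beta])$, the loop $\gamma(s)=(\alpha(s),\beta(s))$ satisfies $\Phi([\gamma])=([\alpha],[\beta])$.

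No step is really hard; the only point that deserves care is making sure continuity of maps into and out of $X\times Y$ is invoked correctly via the universal property of the product topology (i.e.\ $f:Z\to X\times Y$ is continuous iff $p_X\circ f$ and $p_Y\circ f$ are). That observation is what drives well-definedness, injectivity, and surjectivity simultaneously, so I would state it once at the start and then apply it wherever needed rather than re-proving continuity by hand each time.
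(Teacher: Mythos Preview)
Your argument is correct and is the standard proof: define $\Phi$ via the projections, check well-definedness and the homomorphism property, then get bijectivity by pairing up loops and homotopies using the universal property of the product. Each step is routine and you have identified the one point worth flagging, namely that continuity into a product is equivalent to continuity of the coordinate maps.

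As for comparison: the paper does not actually prove this proposition. It is stated with a citation to \cite{allen, paulst} and then the text moves on immediately to the next topic. So there is no ``paper's own proof'' to compare against; your write-up simply fills in what the paper leaves to the references, and it does so correctly.
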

	Now, we will look at the dependence of $\pi_1(X,x)$ on the choice of the basepoint $x.$ We suppose that $Y$ is a topological space, and $y_1,y_2\in Y$. Let $\gamma:[0,1]\rightarrow Y$ be a path from $y_1$ to $y_2.$ Define $\psi_\gamma : \pi_1(Y,y_1)\to \pi_1(Y,y_2)$ by
	\begin{eqnarray*}
		\psi_\gamma(\left[\alpha\right])=\left[\bar{\gamma}\cdot\alpha\cdot\gamma\right].
	\end{eqnarray*}
	\begin{pro}\cite{allen, paulst}:
		The map $\psi_\gamma$ defined above is an isomorphism.
	\end{pro}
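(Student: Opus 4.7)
The plan is to verify three properties in order: $\psi_\gamma$ is well-defined on homotopy classes, it is a group homomorphism, and it is bijective with explicit two-sided inverse $\psi_{\bar\gamma}$.

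For well-definedness, suppose $\alpha \sim \alpha'$ are loops at $y_1$. Since $\bar\gamma \sim \bar\gamma$ and $\gamma \sim \gamma$ trivially, two applications of Lemma \ref{c2l2413} give $\bar\gamma \cdot \alpha \cdot \gamma \sim \bar\gamma \cdot \alpha' \cdot \gamma$, so $\psi_\gamma([\alpha])$ depends only on $[\alpha]$. One also needs to check that $\bar\gamma \cdot \alpha \cdot \gamma$ is indeed a loop at $y_2$, which is immediate from $\bar\gamma(0) = \gamma(1) = y_2$ and $\gamma(1) = y_2$.

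For the homomorphism property, take $[\alpha], [\beta] \in \pi_1(Y, y_1)$. On the one hand $\psi_\gamma([\alpha] \cdot [\beta]) = [\bar\gamma \cdot (\alpha \cdot \beta) \cdot \gamma]$, and on the other $\psi_\gamma([\alpha]) \cdot \psi_\gamma([\beta]) = [(\bar\gamma \cdot \alpha \cdot \gamma) \cdot (\bar\gamma \cdot \beta \cdot \gamma)]$. The strategy is to insert the constant loop $c_{y_1}$ between $\alpha$ and $\beta$ using Lemma \ref{c2l2411}, replace it by $\gamma \cdot \bar\gamma$ using Lemma \ref{c2l2415}, and then reshuffle parentheses using Lemma \ref{c2e2415} until the two expressions match. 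Combining these with Lemma \ref{c2l2413} to propagate homotopies through compositions yields the desired equality of homotopy classes.

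For bijectivity, define $\psi_{\bar\gamma} : \pi_1(Y, y_2) \to \pi_1(Y, y_1)$ by $\psi_{\bar\gamma}([\beta]) = [\gamma \cdot \beta \cdot \bar\gamma]$ (using $\bar{\bar\gamma} = \gamma$); by the same argument as above this map is well-defined. Then
\[
\psi_\gamma(\psi_{\bar\gamma}([\beta])) = [\bar\gamma \cdot \gamma \cdot \beta \cdot \bar\gamma \cdot \gamma],
\]
and Lemma \ref{c2l2415} together with Lemma \ref{c2l2411} collapses the two pairs $\bar\gamma \cdot \gamma$ to constant loops, leaving $[\beta]$; the symmetric calculation shows $\psi_{\bar\gamma} \circ \psi_\gamma = \mathrm{Id}_{\pi_1(Y, y_1)}$.

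The main obstacle is purely bookkeeping rather than conceptual: because composition of paths is only associative up to homotopy and the constant loop is only an identity up to homotopy, every manipulation requires invoking Lemmas \ref{c2l2411}, \ref{c2l2413}, \ref{c2l2415}, and \ref{c2e2415} to justify the passage between representatives. I would prove the homomorphism step in detail (as it contains the key insertion trick $c_{y_1} \sim \gamma \cdot \bar\gamma$) and treat the inverse computation as parallel.
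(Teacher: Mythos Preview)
Your proof is correct and is the standard argument. Note, however, that the paper does not actually supply its own proof of this proposition: it merely states the result with citations to \cite{allen, paulst} and moves on. So there is nothing to compare against beyond saying that your argument is exactly the one found in those references (e.g.\ Hatcher), using the lemmas the paper has already recorded.
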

	So we conclude that if $Y$ is a \textbf{path-connected} space (that is, for any $y_1,y_2\in Y$, there exists a path from $y_1$ to $y_2$), the group $\pi_1(Y,y)$ is up to isomorphism, independent of whatever choice of basepoint $y$. In this case, $\pi_1(Y,y)$ is often written as $\pi_1(Y).$ 
	\begin{thm}\cite{allen}:
		The fundamental group of a circle is $\mathbb{Z}$, that is, $\pi_1(S^1)=\mathbb{Z}.$
	\end{thm}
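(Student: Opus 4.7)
The plan is to construct an explicit isomorphism $\Phi: \mathbb{Z} \to \pi_1(S^1, s_0)$ with basepoint $s_0=(1,0)$. I would define the exponential covering map $p:\mathbb{R}\to S^1$ by $p(t)=(\cos 2\pi t, \sin 2\pi t)$, and for each $n\in\mathbb{Z}$ set $\omega_n(s)=p(ns)$, a loop that winds around $S^1$ a total of $n$ times. The candidate isomorphism is $\Phi(n)=[\omega_n]$. The strategy is to reduce statements about loops and loop-homotopies in $S^1$ to statements about paths in the contractible space $\mathbb{R}$ by lifting through $p$.

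The technical heart of the argument will be two lifting lemmas I would prove first: a \textbf{path-lifting lemma} asserting that every path $f:[0,1]\to S^1$ with $f(0)=s_0$ has a unique lift $\tilde{f}:[0,1]\to\mathbb{R}$ with $\tilde{f}(0)=0$ and $p\circ\tilde{f}=f$, and a \textbf{homotopy-lifting lemma} asserting that every continuous $F:[0,1]\times[0,1]\to S^1$ with $F(s,0)=f(s)$ has a unique lift $\tilde{F}$ with $\tilde{F}(s,0)=\tilde{f}(s)$ and $p\circ\tilde{F}=F$. To prove them I would cover $S^1$ by two open arcs $U,V$ above each of which $p$ is a disjoint union of homeomorphisms onto sheets indexed by $\mathbb{Z}$; then I would use the Lebesgue number lemma to subdivide $[0,1]$ (respectively $[0,1]^2$) into pieces small enough that each piece maps entirely into $U$ or $V$, and build the lift one piece at a time, using the already-constructed boundary values to pick the correct sheet.

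Granting those lemmas, the remaining steps are short. For surjectivity of $\Phi$, given a loop $f$ at $s_0$, its lift $\tilde{f}$ ends in $p^{-1}(s_0)=\mathbb{Z}$, say $\tilde{f}(1)=n$; the straight-line homotopy in $\mathbb{R}$ between $\tilde{f}$ and the path $s\mapsto ns$ (fixing the endpoints $0$ and $n$) projects under $p$ to a path-homotopy $f\sim\omega_n$, so $[f]=\Phi(n)$. For injectivity, a path-homotopy between $\omega_m$ and $\omega_n$ lifts to a homotopy $\tilde{F}$ with $\tilde{F}(s,0)=ms$; the restrictions of $\tilde{F}$ to the two vertical edges are lifts of constant paths at $s_0$ and are therefore constant, so $\tilde{F}(\cdot,1)$ is a path from $0$ to $m$ that is also the canonical lift of $\omega_n$, which ends at $n$, forcing $m=n$. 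Finally, for the homomorphism property, concatenating the lifts $s\mapsto ms$ and $s\mapsto m+ns$ produces a lift of $\omega_m\cdot\omega_n$ ending at $m+n$, so the surjectivity argument yields $\omega_m\cdot\omega_n\sim\omega_{m+n}$, hence $\Phi(m+n)=\Phi(m)\Phi(n)$.

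The main obstacle I anticipate is the uniqueness statement in the lifting lemmas, specifically the compatibility of sheet choices across the subdivision grid. This reduces to the following connectedness fact: any two continuous lifts of the same map on a connected set which coincide at one point must coincide everywhere, because their difference is a continuous $\mathbb{Z}$-valued function on a connected space. Pinning this local-to-global step down carefully, and then gluing the piecewise lifts into a single continuous lift without ambiguity, is the subtle piece of the argument; everything after that is essentially formal manipulation through the map $p$.
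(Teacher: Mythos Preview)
Your proposal is correct and is precisely the standard covering-space argument from Hatcher \cite{allen}: define $\Phi(n)=[\omega_n]$ via the exponential map $p:\mathbb{R}\to S^1$, establish path-lifting and homotopy-lifting by a Lebesgue-number subdivision, and deduce that $\Phi$ is a bijective homomorphism. The paper itself does not supply a proof of this theorem at all; it merely states the result with a citation to Hatcher and uses it as a black box thereafter, so there is nothing in the paper to compare your argument against beyond the reference. Your sketch is in fact exactly the proof found in the cited source, including the identification of the uniqueness-of-lifts step (continuity of a $\mathbb{Z}$-valued function on a connected domain) as the crux of the gluing.
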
\vspace{-2pt}
	There are different ways of finding the fundamental group of a topological space. For example if $X\cong Y$, then $\pi_1(X)\cong\pi_1(Y)$. This is what we will be discussing for the rest of this chapter. We begin with the induced homomorphism and deformation retraction, and then we state van Kampen's theorem and give its application.
	\section{Induced Homomorphism and Deformation Retract}\label{c2s5}
	A continuous map from a topological space into one of its subspaces where the position of all points of the subspace is preserved is called a retraction. In what follows, we give a formal definition of  a deformation retraction. We begin with the notion of the induced homomorphism.
	\begin{defn}[Induced homomorphism]
		Let $f:(X,x)\rightarrow (Y,y)$. Then the map
		\begin{eqnarray*}
		 f_{\ast}=\pi_1(f)=:\pi_1(X,x)&\to& \pi_1(Y,y)\\
		 \left[\alpha\right]&\mapsto&\left[f\circ \alpha\right].
		 \end{eqnarray*} is called the homomorphism induced by $f$.\label{s2d3}
	\end{defn}
	
	From this definition, we see that $f_\ast$ is a group homomorphism since for $[\alpha],[\beta]\in \pi_1(X)$, we have $f_\ast([\alpha]\cdot[\beta])=f_\ast([\alpha\cdot\beta])=[f\circ (\alpha\cdot\beta)]$. But then
	$$f\circ(\alpha\cdot\beta)(s)=f\circ\begin{cases}
		\alpha(2s),\text{ if } s\in[0,\frac{1}{2}]\\
		\beta(2s-1), \text{ if } s\in[\frac{1}{2},1]
	\end{cases}=\begin{cases}
	f\circ\alpha(2s),\text{ if } s\in[0,\frac{1}{2}]\\
	f\circ\beta(2s-1), \text{ if } s\in[\frac{1}{2},1]
	\end{cases}=(f\circ \alpha)\cdot(f\circ\beta)(s).$$
	Hence, $$f_\ast([\alpha]\cdot[\beta])=[f\circ(\alpha\cdot\beta)]=[(f\circ\alpha)\cdot(f\circ\beta)]=[f\circ\alpha]\cdot[f\circ\beta]=f_\ast([\alpha])\cdot f_\ast([\beta]).$$
	
	Also $\pi_1(\mathrm{Id}_X)=\mathrm{Id}_{\pi_1(X)}$ since for $[\alpha]\in \pi_1(X),~\pi_1(\mathrm{Id}_X)[\alpha]=[\mathrm{Id}_X\circ \alpha]=[\alpha]=\mathrm{Id}_{\pi_1(X)}[\alpha].$
	\begin{pro}
		Let $X$ and $Y$ be topological spaces. Suppose $X\cong Y$, then $\pi_1(X)\cong \pi_1(Y).$ \label{s2p6}
	\end{pro}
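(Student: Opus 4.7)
The plan is to exploit the functorial behaviour of $\pi_1$ together with the induced homomorphism machinery introduced in Definition \ref{s2d3}. Since $X \cong Y$, I have a homeomorphism $f : X \to Y$ whose inverse $f^{-1} : Y \to X$ is also continuous. The strategy is to show that the induced maps $f_\ast : \pi_1(X) \to \pi_1(Y)$ and $(f^{-1})_\ast : \pi_1(Y) \to \pi_1(X)$ are mutually inverse group homomorphisms, which is enough to conclude that $\pi_1(X) \cong \pi_1(Y)$.

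First I would recall that $f_\ast$ and $(f^{-1})_\ast$ are group homomorphisms; this has already been verified in the discussion following Definition \ref{s2d3}. The step that really does the work is the functoriality identity
\begin{equation*}
(g \circ h)_\ast = g_\ast \circ h_\ast,
\end{equation*}
for composable continuous maps $h : X \to Y$ and $g : Y \to Z$. I would establish this by unwinding the definition: for a loop $\alpha$ in $X$,
\begin{equation*}
(g \circ h)_\ast([\alpha]) = [(g \circ h) \circ \alpha] = [g \circ (h \circ \alpha)] = g_\ast([h \circ \alpha]) = g_\ast(h_\ast([\alpha])),
\end{equation*}
where the middle equality is just associativity of composition of maps. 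Together with the already-noted identity $(\mathrm{Id}_X)_\ast = \mathrm{Id}_{\pi_1(X)}$, this makes $\pi_1$ into a functor from topological spaces to groups.

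Applying functoriality to $f$ and $f^{-1}$ then gives
\begin{equation*}
f_\ast \circ (f^{-1})_\ast = (f \circ f^{-1})_\ast = (\mathrm{Id}_Y)_\ast = \mathrm{Id}_{\pi_1(Y)}, \qquad (f^{-1})_\ast \circ f_\ast = (f^{-1} \circ f)_\ast = (\mathrm{Id}_X)_\ast = \mathrm{Id}_{\pi_1(X)}.
\end{equation*}
Hence $f_\ast$ is a bijective group homomorphism, i.e.\ an isomorphism, and therefore $\pi_1(X) \cong \pi_1(Y)$.

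There is no real obstacle here: the statement is essentially the functoriality of $\pi_1$ packaged as a corollary. The only subtlety worth flagging is the implicit choice of basepoint --- one should pick $x \in X$ and take $y = f(x) \in Y$ so that $f$ induces $f_\ast : \pi_1(X,x) \to \pi_1(Y,y)$ on pointed spaces, and similarly for $f^{-1}$. For path-connected $X$ and $Y$ this is harmless, since by the isomorphism $\psi_\gamma$ the fundamental group is independent of basepoint up to isomorphism; in the general case the argument shows that $\pi_1(X,x) \cong \pi_1(Y, f(x))$ for every basepoint $x$.
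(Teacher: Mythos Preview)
Your proof is correct and follows essentially the same approach as the paper: both use the induced homomorphisms $f_\ast$ and $(f^{-1})_\ast$ (the paper calls the latter $g_\ast$) together with functoriality $(g\circ f)_\ast = g_\ast f_\ast$ and $(\mathrm{Id}_X)_\ast = \mathrm{Id}_{\pi_1(X)}$ to conclude that $f_\ast$ is an isomorphism. Your version is slightly more thorough in that you explicitly verify the composition identity and address the basepoint issue, whereas the paper uses these facts without further comment.
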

	\vspace{-10pt}
	\begin{proof}
		Suppose $X\cong Y$, then there exists a bijection $f:X\rightarrow Y$ such that $f$ and $g$ are continuous, where $g$ is the inverse of $f$. Let $f_\ast :\pi_1(X)\rightarrow \pi_1(Y)$ be the induced homomorphism as in Definition \ref{s2d3}. Also, let $g_\ast :\pi_1(Y)\rightarrow \pi_1(X)$ be the homomorphism induced by the map $g :Y\rightarrow X$. 
		
			Since  $f$ is a bijection, then $f\circ g=\mathrm{Id}_Y$, and $g\circ f=\mathrm{Id}_X.$ Therefore, 
			\begin{eqnarray*}
				g_\ast f_\ast&=&(g\circ f)_\ast=\pi_1(g\circ f)=\pi_1(\mathrm{Id}_X)=\mathrm{Id}_{\pi_1(X)},~\text{ and }\\
				f_\ast g_\ast&=&(f\circ g)_\ast = \pi_1(f\circ g)=\pi_1(\mathrm{Id}_Y)=\mathrm{Id}_{\pi(Y)}.
			\end{eqnarray*}
			Hence $f_\ast$ is a bijection, and therefore $f_\ast$ is an isomorphism. This then implies that  $\pi_1(X)\cong \pi_1(Y).$
	\end{proof}
	Even if $X$ is just homotopy equivalent to $Y$, we will have $\pi_1(X)\cong \pi_1(Y)$ as stated in Theorem \ref{c2t254} below
	\begin{pro}\cite{allen}
		If $f:X\to Y$ is a homotopy equivalence, then the induced map $\pi_1(f)=f_{\ast}:\pi_1(X,x)\to \pi_1(Y,f(x))$ is an isomorphism for all $x\in X.$\label{c2p252}
	\end{pro}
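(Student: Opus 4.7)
The natural strategy is to exploit the homotopy inverse $g:Y\to X$ together with the functoriality identities $(g\circ f)_{\ast}=g_{\ast}\circ f_{\ast}$ and $(\mathrm{Id}_X)_{\ast}=\mathrm{Id}_{\pi_1(X,x)}$ established just before the statement. If $gf$ were literally equal to $\mathrm{Id}_X$ and $fg$ literally equal to $\mathrm{Id}_Y$, the preceding proposition on homeomorphisms would apply verbatim. The substantive difficulty is that a homotopy equivalence only provides $gf\simeq\mathrm{Id}_X$ and $fg\simeq\mathrm{Id}_Y$, and moreover $gf(x)$ is typically not equal to $x$, so the induced maps $(gf)_{\ast}$ and $(\mathrm{Id}_X)_{\ast}$ do not even share a codomain. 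Resolving this basepoint mismatch by means of the change-of-basepoint isomorphism $\psi_\gamma$ introduced in the previous section is the crux of the argument.

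My plan is to establish one auxiliary lemma and then conclude by a short diagram chase. The lemma reads as follows: if $\phi_0,\phi_1:X\to Y$ are continuous maps related by a homotopy $H:X\times[0,1]\to Y$, and $\mu(t)=H(x,t)$ is the path it induces from $\phi_0(x)$ to $\phi_1(x)$, then for every loop $\alpha$ at $x$ the loops $\phi_0\circ\alpha$ and $\mu\cdot(\phi_1\circ\alpha)\cdot\bar{\mu}$ are homotopic as loops at $\phi_0(x)$; equivalently, $(\phi_0)_{\ast}=\psi_{\bar{\mu}}\circ(\phi_1)_{\ast}$. To prove it I would define $\Gamma:[0,1]\times[0,1]\to Y$ by $\Gamma(s,t)=H(\alpha(s),t)$ and observe that its bottom edge is $\phi_0\circ\alpha$, its top edge is $\phi_1\circ\alpha$, and both vertical edges traverse $\mu$. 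A piecewise reparametrization of the square, routine for path-homotopy arguments, then produces the required homotopy of loops based at $\phi_0(x)$.

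With the lemma in hand I would apply it twice. Taking $\phi_0=gf$ and $\phi_1=\mathrm{Id}_X$ shows that $g_{\ast}\circ f_{\ast}=(gf)_{\ast}:\pi_1(X,x)\to\pi_1(X,gf(x))$ coincides with the basepoint-change isomorphism $\psi_{\bar\mu}$, and in particular is an isomorphism. This already forces $f_{\ast}$ to be injective and forces $g_{\ast}:\pi_1(Y,f(x))\to\pi_1(X,gf(x))$ to be surjective. Taking $\phi_0=fg$ and $\phi_1=\mathrm{Id}_Y$ based at $f(x)$ shows in the same way that $f_{\ast}'\circ g_{\ast}=(fg)_{\ast}:\pi_1(Y,f(x))\to\pi_1(Y,fgf(x))$ is an isomorphism, where $f_{\ast}'$ denotes the map induced by $f$ with basepoint $gf(x)$. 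Injectivity of $g_{\ast}$ follows at once, so $g_{\ast}$ is bijective and $f_{\ast}=g_{\ast}^{-1}\circ(g_{\ast}\circ f_{\ast})$ is a composition of bijections, proving the proposition.

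The main obstacle is the auxiliary lemma itself: while $\Gamma(s,t)=H(\alpha(s),t)$ is manifestly continuous and has the desired four edges, squeezing it into a genuine homotopy of loops based at $\phi_0(x)$ requires carving $[0,1]\times[0,1]$ into regions on which the comparison between $\phi_0\circ\alpha$ and $\mu\cdot(\phi_1\circ\alpha)\cdot\bar{\mu}$ can be read off directly. Once that combinatorial bookkeeping is carried out, everything else is purely formal: the lemma converts each side of the homotopy equivalence into an isomorphism up to a basepoint change, and elementary manipulation of injectivity and surjectivity of composites forces $f_{\ast}$ to be an isomorphism.
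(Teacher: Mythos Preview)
Your argument is correct and is exactly the standard proof (it is, in fact, the proof given in Hatcher \cite{allen}, to which the proposition is attributed). Note, however, that the paper itself does not prove this proposition: it is stated with a citation and no proof, and the subsequent theorem simply invokes it. So there is nothing in the paper to compare against; your write-up supplies what the paper deliberately omitted, and does so by the expected route via the basepoint-change isomorphism $\psi_\gamma$ already introduced in the preceding section.
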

	\begin{thm}\cite{allen}
		Let $X$ and $Y$ be spaces. Assume that $X$ is homotopy equivalent to $Y$. Then $\pi_1(X)\cong \pi_1(Y)$. That is $X\simeq Y \implies \pi_1(X)\cong \pi_1(Y).$\label{c2t254}
	\end{thm}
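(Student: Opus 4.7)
The plan is to observe that Theorem \ref{c2t254} is essentially an immediate corollary of Proposition \ref{c2p252}, so the proof reduces to unpacking the hypothesis $X \simeq Y$ and invoking that proposition. First I would recall that by definition of homotopy equivalence, $X \simeq Y$ means there exist continuous maps $f:X\to Y$ and $g:Y\to X$ such that $gf \sim \mathrm{Id}_X$ and $fg \sim \mathrm{Id}_Y$. In particular, such an $f$ is a homotopy equivalence in the sense required by Proposition \ref{c2p252}.

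Next I would fix an arbitrary basepoint $x \in X$ and let $y = f(x) \in Y$. Applying Proposition \ref{c2p252} to the homotopy equivalence $f$ at the basepoint $x$ yields that the induced homomorphism
\begin{equation*}
f_{\ast} = \pi_1(f) : \pi_1(X,x) \longrightarrow \pi_1(Y,f(x))
\end{equation*}
is an isomorphism of groups. Hence $\pi_1(X,x) \cong \pi_1(Y,f(x))$.

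To conclude $\pi_1(X) \cong \pi_1(Y)$ (with basepoints suppressed), I would appeal to the basepoint-independence discussion immediately preceding the theorem: whenever the spaces are path-connected, the fundamental group is independent of the choice of basepoint up to isomorphism via the conjugation isomorphism $\psi_\gamma$, so the notation $\pi_1(X)$ is well defined and the isomorphism obtained above gives $\pi_1(X) \cong \pi_1(Y)$.

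The proof is essentially a one-line invocation; no step here is a genuine obstacle, since all the real work has already been absorbed into Proposition \ref{c2p252}. The only mildly delicate point is bookkeeping around basepoints, namely making sure the reader understands that $f_{\ast}$ lands in $\pi_1(Y,f(x))$ rather than $\pi_1(Y,y)$ for an arbitrary $y$, and that suppressing basepoints is legitimate under the standing path-connectedness convention.
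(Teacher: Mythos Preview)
Your proposal is correct and follows exactly the same approach as the paper: the paper's proof is the single sentence ``This follows immediately from Proposition \ref{c2p252} above.'' You have simply unpacked the basepoint bookkeeping explicitly, which is fine but not required.
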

	\begin{proof}
		This follows immediately from Proposition \ref{c2p252} above.
	\end{proof}
	\begin{exa}
		If $X$ is contractible, then $\pi_1(X)=0$ by the previous theorem. For instance, $\pi_1(\mathbb{R}^n)=0$ and $\pi_1(D^n)=0$ ($\mathbb{R}^n\simeq \mathrm{pt}$ and $D^n\simeq \mathrm{pt}$ by Example \ref{c2e1}).\label{c2e255}
	\end{exa}
	\begin{defn}[Deformation Retract]
		Let $X$ be a space, and let $A\subseteq X$. We say that $A$ is a retract of $X$ if there is a continuous map $r:X\to A$, called retraction, such that $r|_A=\mathrm{Id}_A$. If in addition $i\circ r\sim \mathrm{Id}_X$, then we say that $A$ is a deformation retract of $X$. (Here $i:A\to X$ is the inclusion map).
	\end{defn}\newpage
	Note that the concept of retract is not the same the same as the concept of deformation retract. By definition, any deformation retract is a retract. But the converse is not true. Namely, take $X=S^1$ and 	
	$A=\{(1,0)\}$. It is clear that $A$ is a retract of $X$. But by the following theorem, $A$ is not a deformation retract of $X$.
	\begin{thm}\cite{allen}
		Let $X$ be a topological space, and let $A\subseteq X.$ Assume that $X$ deformation retracts onto $A$. Then the inclusion $i:A\to X$ induces an isomorphism $i_{\ast}:\pi_1(A)\to \pi_1(X).$\label{c2t257}
	\end{thm}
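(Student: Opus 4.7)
The plan is to reduce the claim to Proposition \ref{c2p252} by exhibiting the inclusion $i:A\to X$ as a homotopy equivalence whose homotopy inverse is the given retraction $r:X\to A$. Once this is established, Proposition \ref{c2p252} immediately yields that $i_{\ast}:\pi_1(A,a)\to \pi_1(X,i(a))$ is an isomorphism for every basepoint $a\in A$, and since $i(a)=a$ this is precisely the desired conclusion.

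To show that $i$ and $r$ are mutual homotopy inverses, I would verify the two composition identities in turn. For the composition $r\circ i:A\to A$, observe that for any $a\in A$ we have $(r\circ i)(a)=r(a)=r|_A(a)=\mathrm{Id}_A(a)$, since $r|_A=\mathrm{Id}_A$ by the definition of a retraction; hence $r\circ i=\mathrm{Id}_A$ literally (and in particular homotopic to $\mathrm{Id}_A$ via the constant homotopy). For the composition $i\circ r:X\to X$, the hypothesis that $A$ is a deformation retract of $X$ gives $i\circ r\sim \mathrm{Id}_X$ by definition, so nothing further needs to be checked. Together these two facts say that $i$ is a homotopy equivalence.

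The main ``obstacle'' is really just cleanly invoking the earlier machinery: the nontrivial content, namely that a homotopy equivalence induces an isomorphism on $\pi_1$, has already been absorbed into Proposition \ref{c2p252}. One subtlety worth noting is that the homotopy $i\circ r\sim \mathrm{Id}_X$ is not required to fix the basepoint $a$, so the argument relies on the basepoint-free formulation of homotopy equivalence; fortunately Proposition \ref{c2p252} is stated for an arbitrary basepoint $x\in X$ and only assumes that $f$ is a homotopy equivalence (with no basepoint condition on the homotopies), which is exactly the setting we are in. With that noted, the proof collapses to the two lines above plus one citation of Proposition \ref{c2p252}.
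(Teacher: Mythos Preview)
Your proposal is correct and follows essentially the same approach as the paper: show that $r\circ i=\mathrm{Id}_A$ (from $r|_A=\mathrm{Id}_A$) and $i\circ r\sim\mathrm{Id}_X$ (from the definition of deformation retract), conclude that $i$ is a homotopy equivalence, and then invoke Proposition~\ref{c2p252}. Your added remark about the basepoint subtlety is a nice clarification but does not change the argument.
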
\vspace{-10pt}
	\begin{proof}
		Since $A$ is a deformation retract of $X$, we have that $r|_A=\mathrm{Id}_A$ and $i\circ r\sim \mathrm{Id}_X$, where $r:X\to A$ is the retraction. The equality $r|_A=\mathrm{Id}_A$ is the same as $r\circ i=\mathrm{Id}_A$. This implies that $r\circ i\sim \mathrm{Id}_A$ and $i\circ r\sim \mathrm{Id}_X$. Hence $i$ is a homotopy equivalence. Therefore, $i_{\ast}$ is an isomorphism by Proposition \ref{c2p252} above.
	\end{proof}
	\begin{defn}[Mapping cylinder] Let $f:X\to Y$ be a map. Then the \textbf{mapping cylinder}  of $f$, denoted by $M_f$, is the quotient space of the disjoint union $(X\times I)\amalg Y$ under the identification $(x,0)\sim f(x), \forall x\in X.$\label{c2dmc}		
	\end{defn}\vspace{-5pt}
	This means that the mapping cylinder of  a map $f:X\to Y$, is obtained by gluing $X\times \{0\}$ to $Y$ via $f$.
	\begin{pro}
		Let $f:X\to Y$ be a continuous map. Then the mapping cylinder of $f$ deformation retracts onto $Y$.\label{c2pm}
	\end{pro}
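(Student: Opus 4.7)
The plan is to write down an explicit retraction and an explicit deformation, then verify that each descends to the quotient $M_f$. First I would define the retraction $r\colon M_f\to Y$ by declaring $r(y)=y$ for $y\in Y$ and $r(x,t)=f(x)$ for $(x,t)\in X\times I$. To see this is well-defined on the quotient, the only identification to check is $(x,0)\sim f(x)$, and indeed both sides are sent to $f(x)\in Y$; continuity follows from the universal property of the quotient topology applied to the evidently continuous map on $(X\times I)\amalg Y$ built from $f\circ\mathrm{pr}_1$ and $\mathrm{Id}_Y$. Then $r\circ i=\mathrm{Id}_Y$, where $i\colon Y\hookrightarrow M_f$ is the inclusion, so $r$ is a retraction in the sense of the definition just before Theorem \ref{c2t257}.

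Next I would construct an explicit deformation from $\mathrm{Id}_{M_f}$ to $i\circ r$. On $(X\times I)\amalg Y$ define
\begin{equation*}
\widetilde{H}\bigl((x,t),s\bigr)=\bigl(x,(1-s)t\bigr),\qquad \widetilde{H}(y,s)=y,
\end{equation*}
for $(x,t)\in X\times I$, $y\in Y$, and $s\in I$. This is continuous on the disjoint union. To obtain the desired homotopy $H\colon M_f\times I\to M_f$, I would compose $\widetilde{H}$ with the quotient map $q\colon (X\times I)\amalg Y\to M_f$ and argue that $q\circ\widetilde{H}$ respects the identification $(x,0)\sim f(x)$: on the $X\times I$ side the point $((x,0),s)$ is sent to $(x,0)$, and on the $Y$ side the point $(f(x),s)$ is sent to $f(x)$; in $M_f$ these are the same point, so $q\circ\widetilde{H}$ factors through a well-defined map $H\colon M_f\times I\to M_f$.

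I still need to know that $H$ itself is continuous, and this is the one mildly delicate step. The issue is that $q\times\mathrm{Id}_I$ is not automatically a quotient map, so continuity of $H$ does not immediately follow from continuity of $q\circ\widetilde{H}$. The standard remedy, which I would invoke, is that $I$ is locally compact Hausdorff; by the classical lemma that $q\times\mathrm{Id}_I$ is a quotient map whenever $I$ is locally compact Hausdorff, continuity of $H$ follows from continuity of $H\circ(q\times\mathrm{Id}_I)=q\circ\widetilde{H}$.

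Finally I would check the endpoint and boundary conditions: $H(z,0)=z$ for all $z\in M_f$ (evident from both cases of $\widetilde{H}$), and $H(z,1)$ equals $f(x)$ if $z$ comes from $(x,t)$ and equals $y$ if $z=y\in Y$, which is exactly $(i\circ r)(z)$. Moreover $H(y,s)=y$ for all $y\in Y$ and all $s$, so $H$ is a deformation retraction in the sense defined above. Therefore $M_f$ deformation retracts onto $Y$, as claimed.
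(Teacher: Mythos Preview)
Your proof is correct and follows essentially the same approach as the paper: the same retraction $r$ and the same linear slide in the cylinder coordinate (the paper uses $[(x,t\cdot s)]$ instead of your $[(x,(1-s)t)]$, a cosmetic reparametrization). You are in fact more careful than the paper, which does not address the point that $q\times\mathrm{Id}_I$ is a quotient map via local compactness of $I$; your treatment of continuity on $M_f\times I$ is the more rigorous one.
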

	\begin{proof}
		By definition of $M_f$, we see that $Y$ is a subspace of $M_f$. Define the map $r:M_f\to Y$ by $r([(x,t)])=f(x),~ x\in X, t\in I$ and $r(y)=y,~ y\in Y$. Then $r$ is continuous since $f$ is continuous and $y$ is a polynomial. Moreover
		$$r|_Y(y)=y,~\forall y\in Y.$$
		Hence, $Y$ is a retract of $M_f$. Now, to show that $Y$ is a deformation retract of $M_f$, we  need to show that $i\circ r\sim \mathrm{Id}_{Mf}$, where $i:Y\to M_f$ is the inclusion map. So we need to find a homotopy $F:M_f\times I\to M_f$ such that
		$$F(a,0)=i\circ r(a),\text{ and } F(a,1)=a\text{ for all }a\in M_f.$$ Define $F([(x,t)],s)=[(x,t\cdot s)]$ (The point of $Y$ stay fixed during the homotopy). This is well-defined since $[(x,0)]=[(x,0\cdot s)]$ for all $s$. Now let $a=[(x,t)]\in M_f$ be arbitrary, then $$F(a,0)=F([(x,t)],0)=[(x,0)].$$ But $$i\circ r(a)=i\circ r([(x,t)])=i(f(x))=[f(x)]=[(x,0)]\text{ (since $(x,0)\sim f(x)$ from Definition \ref{c2dmc})}.$$
		Thus $F(a,0)=i\circ r(a).$ Also
		$$F(a,1)=F([(x,t)],1)=[(x,t)]=a.$$
		Hence $F$ is the desired homotopy. Thus, the mapping cylinder of $f$ deformation retracts onto $Y$.
	\end{proof}
	\section{Van Kampen's Theorem}\label{c2s6}
	Computing fundamental group of a very large space may be difficult. But then, van Kampen's theorem gives a way of computing fundamental group of a space (large or not) which can be decomposed into 
	simpler spaces whose fundamental groups are known already. We recall from Section \ref{c2s3} that the free product notation is $``\star"$.
	\newpage
	\begin{thm}[van Kampen's Theorem \cite{allen}]
		Let $X$ be a topological space, and let $U, V$ be open subsets of $X$ such that $X=U\cup V$. Assume that the basepoint $x_0\in U\cap V$, and $U,V,U\cap V$ are path connected. Consider the following diagrams:
		\begin{center}
			\includegraphics[height=3cm]{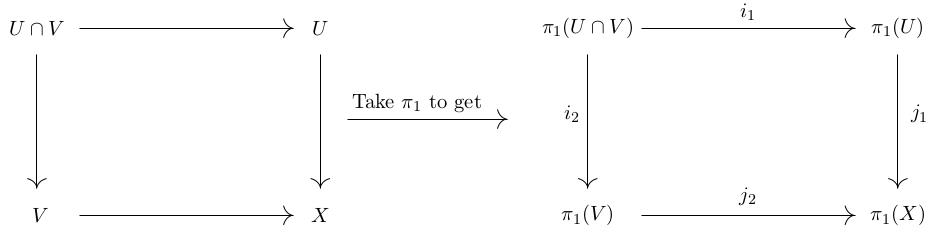}
		\end{center}
	Then
	\begin{enumerate}
		\item the canonical map $\phi:\pi_1(U)\star \pi_1(V)\rightarrow \pi_1(X)$ is surjective, and
		\item $\ker(\phi)$ is the normal subgroup $N$ generated by $i_1(g)i_2(g)^{-1}\in\pi_1(U)\star\pi_1(V),~g\in \pi_1(U\cap V)$.
	\end{enumerate}
	Hence, by the first isomorphism theorem, we conclude that $\pi_1(X)\cong \dfrac{\pi_1(U)\star\pi_1(V)}{N}$. 
	\end{thm}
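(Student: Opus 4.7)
The plan is to prove the two assertions in turn: surjectivity of $\phi$, and then the characterization of $\ker(\phi)$ as the normal subgroup $N$. For surjectivity, I would start with an arbitrary loop $\alpha:[0,1]\to X$ based at $x_0$. Since $\{\alpha^{-1}(U),\alpha^{-1}(V)\}$ is an open cover of the compact interval $[0,1]$, the Lebesgue number lemma yields a partition $0=s_0<s_1<\cdots<s_n=1$ such that each restriction $\alpha|_{[s_{i-1},s_i]}$ lies entirely in $U$ or in $V$. By merging consecutive subintervals of the same type, I can arrange that adjacent subpaths lie in \emph{different} sets, so that the interior endpoints $\alpha(s_i)$ all lie in $U\cap V$. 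Using path-connectedness of $U\cap V$, for each interior $s_i$ I would choose a path $\gamma_i$ in $U\cap V$ from $x_0$ to $\alpha(s_i)$. Then, inserting $\gamma_i\cdot\bar{\gamma_i}$ between consecutive arcs and applying Lemmas \ref{c2l2411}, \ref{c2l2413}, \ref{c2l2415}, and \ref{c2e2415}, I would rewrite $[\alpha]$ as a product $[\alpha_1]\cdots[\alpha_n]$ of homotopy classes of loops based at $x_0$, each lying entirely in $U$ or in $V$. This exhibits $[\alpha]$ as an element of the image of $\phi$.

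For the kernel, the inclusion $N\subseteq\ker(\phi)$ is straightforward: for any $g\in\pi_1(U\cap V)$, the loops $i_1(g)$ and $i_2(g)$ represent the very same loop in $X$, so $\phi(i_1(g)i_2(g)^{-1})$ is trivial. The substantive direction is $\ker(\phi)\subseteq N$. Suppose a word $w=[\alpha_1]\cdots[\alpha_n]$ in $\pi_1(U)\star\pi_1(V)$ maps to the identity, so that the concatenation $\alpha_1\cdots\alpha_n$ is null-homotopic in $X$ via some $F:[0,1]\times[0,1]\to X$. Applying the Lebesgue number lemma to the cover $\{F^{-1}(U),F^{-1}(V)\}$ of the unit square, and refining so the vertical partition times include the concatenation points of the $\alpha_i$, I obtain a grid of subrectangles $R_{ij}$ each mapped into $U$ or into $V$. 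For every grid vertex $v$, I would choose a path $\gamma_v$ from $x_0$ to $F(v)$ that lies in whichever of $U$, $V$, or $U\cap V$ contains the images of all adjacent rectangles.

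The hard part is the bookkeeping that converts this geometric data into a sequence of algebraic moves in $N$. Traversing the grid one cell at a time, the image under $F$ of each rectangle's boundary, appropriately conjugated by the chosen $\gamma_v$'s, is a null-homotopic loop in either $U$ or $V$ (since rectangles are simply connected and their images lie in one of the two open sets). In $\pi_1(U)\star\pi_1(V)$ these relations let me progressively transform the word representing the top edge of the square into the one representing the bottom edge. The only nontrivial moves occur when an edge of the grid is shared between a $U$-rectangle and a $V$-rectangle; in that case the corresponding loop lies in $U\cap V$, and reinterpreting it from being an element of $\pi_1(U)$ to an element of $\pi_1(V)$ (or vice versa) is exactly the relation $i_1(g)=i_2(g)$ defining $N$. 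Thus $w$ reduces to the empty word modulo $N$, giving $\ker(\phi)\subseteq N$. The first isomorphism theorem combined with surjectivity then produces the stated presentation $\pi_1(X)\cong(\pi_1(U)\star\pi_1(V))/N$.
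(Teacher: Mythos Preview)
The paper does not supply its own proof of this theorem; it simply refers the reader to page 44 of Hatcher \cite{allen}. Your proposal is essentially a sketch of Hatcher's argument (Lebesgue number lemma to factor a loop into $U$- and $V$-pieces for surjectivity, and a grid subdivision of a null-homotopy for the kernel), so there is no discrepancy to discuss.
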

	The proof of this theorem can be found on page 44 of the book of Hatcher \cite{allen}. In what follows, we use van Kampen's theorem to compute the fundamental group of wedge of two circles.
	\begin{exa}[Wedge of two circles]
		We assume that $X$ and $Y$ are two circles . Let $x_0$ and $y_0$ be the basepoints of $X$ and $Y$ respectively. Then, the wedge of $X$ and $Y$, written as $X\vee Y$, is obtained by identifying these two basepoints. This scenario is illustrated in the diagram below
		\begin{center}
			\includegraphics[height=3cm]{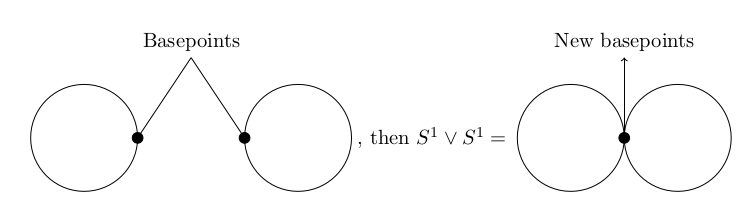}
		\end{center}
	Now, we will use Van Kampen's theorem to compute its fundamental group. We let
	\begin{center}
		\includegraphics[height=2cm]{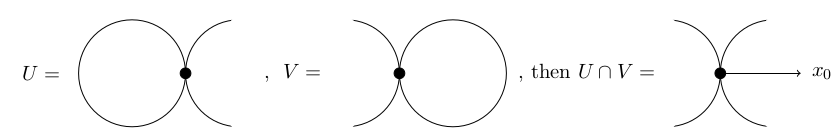}
	\end{center}
	So, $U$ and $V$ are open and path-connected, $x_0\in U\cap V, ~U\cap V$ is path-connected, and $S^1\vee S^1=U\cup V.$
	We see that $U\simeq S^1,\text{ hence }\pi_1(U)\cong \pi_1(S^1)=\mathbb{Z}.\text{ Also } V\simeq S^1,\text{ hence }\pi_1(V)\cong \mathbb{Z}$. Since $U\cap V\simeq \{x_0\}$, it follows that $\pi_1(U\cap V)\cong 0.$	
	Hence Van Kampen's theorem says that
	\begin{equation}\pi_1(S^1\vee S^1)\cong\frac{\pi_1(U)\star \pi_1(V)}{N}.\label{c2e261}\end{equation}
	If $g\in \pi_1(U\cap V)$, then $g=0$. Hence $i_1(g)=e_1$. Similarly, $i_2(g)=e_2.$ So $i_1(g)i_2(g)^{-1}=e_1e_2=e$, the 
	\newpage
	empty word in $\pi_1(U)\star \pi_1(V).$ But $N=\langle i_1(g)i_2(g)^{-1}\rangle=\langle e \rangle$. Thus $N\cong 0.$ Hence $\eqref{c2e261}$ implies that $$\pi_1(S^1\vee S^1)\cong \pi_1(U)\star \pi_1(V)\cong \mathbb{Z}\star \mathbb{Z}.$$
	\end{exa}
	\begin{exa}
		$\pi_1(S^n)\cong0,~n\ge 2.$ We suppose that $U=S^n\setminus\{(0,\dots,1)\}$, and $V=S^n\setminus\{0,\dots,-1\}$. Hence $U\cap V=S^n\setminus\{(0,\dots,1), (0,\dots,-1)\}$. By this decomposition, we see that $U$, $V$, and $U\cap V$ are open and path-connected. We also have that $S^n=U\cup V$. Let $x_0\in U\cap V$, then by van Kampen's theorem, we have that 
		$$\pi_1(S^n)\cong \frac{\pi_1(U)\star\pi_1(V)}{N}.$$
		But by Proposition \ref{s2p5}, we have that $U\cong \mathbb{R}^3$. Hence $\pi_1(U)\cong \pi_1(\mathbb{R}^3)=0.$ Similarly, $\pi_1(V)=0$.
		Therefore $$\pi_1(S^n)\cong 0.\label{c2e263}$$
	\end{exa}
	
	Having studied these concepts in group theory and fundamental groups, we will now compute and describe the fundamental group of torus knots in the next chapter.  
\chapter{The Fundamental Group of Torus Knots}
\label{chap3}
The knot group of any knot is the fundamental group of the complement of the knot. So, to find the fundamental group of torus knots, it is as good as finding the fundamental group of the knots' complements. The goal of this chapter is to compute the knot group of torus knots, to describe the structure of the group, and finally to compute the knot group of an arbitrary knot. 
Throughout this chapter, we will use the standard notations $``\simeq"$, and $``\cong"$ for ``homotopy equivalent to" and ``homeomorphic to" respectively. We start with the following definitions.
\begin{defn}[Knot] 
	A knot is an embedding $f:S^1\to \mathbb{R}^3$ of the unit circle inside $\mathbb{R}^3$
\end{defn}
We recall that a map $f:X\to Y$ is an embedding if $f:X\to f(X)$ is a homeomorphism. The map $f:X\to Y$ needs not be a homeomorphism. But the map from $X$ to $f(X)$ that sends $x$ to $f(x)$ is a homeomorphism. If $f:X\to Y$ is an embedding, then in particular $f:X\to Y$ is injective.

Consider the standard embedding of the torus $S^1\times S^1$ in $\mathbb{R}^3$. So $S^1\times S^1\subseteq \mathbb{R}^3$. Also, consider the map $f:S^1\to S^1\times S^1$ defined by $f(z)=(z^m,z^n).$
\begin{pro}\cite{chris}:
	The map $f:S^1\to S^1\times S^1$ defined by $f(z)=(z^m,z^n)$ is an embedding if and only if $\gcd(m,n)=1.$\label{c3p302}
\end{pro}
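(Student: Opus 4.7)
The plan is to reduce the embedding statement to a statement about injectivity, and then to handle injectivity using elementary facts about roots of unity and B\'ezout's identity. The key observation is that $S^1$ is compact and $S^1\times S^1$ is Hausdorff, so any continuous injection $f:S^1\to S^1\times S^1$ is automatically a homeomorphism onto its image (closed subsets of the compact domain map to closed subsets of the image, giving continuity of the inverse). The map $f(z)=(z^m,z^n)$ is manifestly continuous, so the whole proposition reduces to the equivalence
$$f \text{ is injective} \iff \gcd(m,n)=1.$$

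For the forward direction, I would argue by contrapositive. If $d=\gcd(m,n)>1$, set $w=e^{2\pi i/d}\in S^1$. Since $d\mid m$ and $d\mid n$, both $w^m$ and $w^n$ equal $1$, so $f(w)=(1,1)=f(1)$, while $w\neq 1$ because $d>1$. Hence $f$ fails to be injective, contradicting the assumption.

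For the reverse direction, suppose $\gcd(m,n)=1$ and $f(z_1)=f(z_2)$ for some $z_1,z_2\in S^1$. Then $z_1^m=z_2^m$ and $z_1^n=z_2^n$, so $w:=z_1 z_2^{-1}\in S^1$ satisfies $w^m=1=w^n$. By B\'ezout's identity, there exist integers $a,b$ with $am+bn=1$, and therefore
$$w = w^{am+bn} = (w^m)^a(w^n)^b = 1,$$
which gives $z_1=z_2$, as required.

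I do not expect any real obstacle here, since the argument is elementary once the topological reduction (continuous injection from a compact space to a Hausdorff space is an embedding) is noted. The only subtlety worth flagging is that this topological reduction is what makes the proof purely algebraic; without it, one would have to verify continuity of $f^{-1}$ on the image by hand. Everything else is essentially a roots-of-unity computation combined with B\'ezout.
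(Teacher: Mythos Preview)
Your proof is correct and follows essentially the same approach as the paper: both reduce the embedding question to injectivity via the compact-to-Hausdorff argument, and both handle the failure of injectivity when $d=\gcd(m,n)>1$ by exhibiting $e^{2\pi i/d}$ as a nontrivial point mapping to $(1,1)$. The only minor difference is that for the injectivity direction the paper argues with angles $\theta,\theta'$ and divisibility of a common denominator, whereas your use of B\'ezout's identity to conclude $w=w^{am+bn}=1$ is a cleaner packaging of the same elementary fact.
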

\begin{proof}
	The map $f$ is continuous since its components are polynomial functions. Since $S^1$ is compact and $S^1\times S^1$ is Hausdorff, then $f$ is an embedding if it is injective. So we will prove that $f$ is injective if and only if $\gcd(m,n)=1$. 
	
	Suppose that $\gcd(m,n)=1$.  We want to show that $f$ is injective.  Let $z,w\in S^1$, and let $\theta, \theta'\in [0,1)$ such that $z=e^{2\pi i\theta}$, and $w=e^{2\pi i \theta'}$. Suppose that $f(z)=f(w)$, then
	$$(z^m,z^n)=(w^m,w^n)\implies \left(e^{2\pi i\theta m},e^{2\pi i\theta n}\right)=\left(e^{2\pi i\theta' m},e^{2\pi i\theta' n}\right).$$
	This implies that $e^{2\pi i\theta m}=e^{2\pi i\theta' m}$ and $e^{2\pi i\theta n}=e^{2\pi i\theta' n}$. Hence we have that
	$$\frac{e^{2\pi i\theta m}}{e^{2\pi i\theta' m}}=1\implies e^{2\pi i m(\theta-\theta')}=1\implies m(\theta-\theta')\in\mathbb{Z}\text{ (Assuming $\theta\ge \theta'$)}.$$
	Similarly, $n(\theta-\theta')\in\mathbb{Z}$. It then follows that $\theta-\theta'=\frac{p}{q}$, where $p$ and $q$ can be chosen such that $\gcd(p,q)=1$. Hence $q|m$ and $q|n$. But $\gcd(m,n)=1$. Hence $q=1$, and thus $\theta-\theta'=p\in\mathbb{Z}$. By our assumption, $\theta-\theta'\ge 0$, and $\theta, \theta'\in[0,1)$. Hence $\theta-\theta'=0\implies \theta=\theta'$. Therefore
	$$z=e^{2\pi i\theta}=e^{2\pi i\theta'}=w.$$
	Hence $f$ is injective.
	
	Conversely, suppose that $f$ is injective. We want to show that $\gcd(m,n)=1$. Suppose by contradiction that $\gcd(m,n)=d>1$. Then $\frac{1}{d}\in (0,1)$, and $\frac{m}{d},\frac{n}{d}\in\mathbb{Z}$. Thus
	$$\left(e^{2m\pi i}, e^{2n\pi i}\right)=\left(e^{(2m\pi i)/d}, e^{(2n\pi i)/d}\right).$$
	This implies that $f$ is not injective. Hence, a contradiction. So we must have that $\gcd(m,n)=1.$
\end{proof}
From now on, we will assume that $\gcd(m,n)=1.$
\begin{defn}[Torus knots]
	The map $f:S^1\to S^1\times S^1$, defined by $f(z)=(z^m,z^n)$ is a knot by Proposition \ref{c3p302}. This knot is called torus knot. We identify $f$ with its image, say $K$. So $K:=\mathrm{Im}(f).$
\end{defn}

\section{Calculating the Fundamental Group of $\mathbb{R}^3\setminus K$}
It is slightly easier to compute $\pi_1(\mathbb{R}^3\setminus K)$ if we replace $\mathbb{R}^3$ by its one-point compactification, $S^3$. So to compute $\pi_1(\mathbb{R}^3\setminus K)$, we will first show that $\pi_1(\mathbb{R}^3\setminus K)\cong \pi_1(S^3\setminus K)$, then we will show that $S^3\setminus K$ deformation retracts onto a 2-dimensional complex $X=X_{m,n}$ which is homeomorphic to the quotient space of a cylinder $S^1\times I$ under the identifications $(z,0)\sim (e^{\frac{2\pi i}{m}}z,0)$ and $(z,1)\sim (e^{\frac{2\pi i}{n}}z,1)$, and lastly we will compute $\pi_1(X)$. We can then conclude that  $\pi_1(\mathbb{R}^3\setminus K)\cong \pi_1(S^3\setminus K)\cong \pi_1(X).$

\begin{pro}
	The fundamental groups of $\mathbb{R}^3\setminus K$ and   ${S^3\setminus K}$ are isomorphic.
\end{pro}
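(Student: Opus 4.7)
The plan is to apply van Kampen's theorem to a decomposition of $S^3 \setminus K$ into two open sets: $\mathbb{R}^3 \setminus K$ itself, and a neighborhood of the point at infinity that avoids $K$. If both the neighborhood of $\infty$ and the overlap are simply connected, van Kampen will directly identify $\pi_1(S^3 \setminus K)$ with $\pi_1(\mathbb{R}^3 \setminus K)$, which is what is needed.

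Concretely, I identify $S^3$ with $\mathbb{R}^3 \cup \{\infty\}$ via one-point compactification. Since $K$ is compact, I pick a closed ball $B \subset \mathbb{R}^3$ whose interior contains $K$, and I set
$$U := \mathbb{R}^3 \setminus K, \qquad V := S^3 \setminus B.$$
Both are open in $S^3 \setminus K$ (the latter because $B$ is bounded, hence closed in $S^3$); their union is $S^3 \setminus K$ since $K \subset B$; and their intersection is $\mathbb{R}^3 \setminus B$. A basepoint can be chosen in this intersection, and each of $U$, $V$, $U \cap V$ is path-connected, as follows at once from their descriptions.

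The two key computations are $\pi_1(V) = 0$ and $\pi_1(U \cap V) = 0$. The overlap $U \cap V = \mathbb{R}^3 \setminus B$ deformation retracts radially onto a sphere slightly larger than $B$, so by Theorem~\ref{c2t257} we have $\pi_1(U \cap V) \cong \pi_1(S^2) = 0$. For the neighborhood $V$, I would exhibit an explicit contraction to $\{\infty\}$: using the radial parametrization of $\mathbb{R}^3 \setminus B$, scale the radial coordinate by $1/(1-t)$ to push each point outward, send every point to $\infty$ at $t = 1$, and leave $\infty$ fixed throughout. The main technical step, and the only genuine obstacle, is verifying continuity of this homotopy at $t = 1$ and at the point $\infty$; this reduces to the characterization of neighborhoods of $\infty$ in $S^3$ as complements of compact sets in $\mathbb{R}^3$. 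Once this is checked, $V$ is contractible, and so $\pi_1(V) = 0$.

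With these pieces in place, van Kampen's theorem yields
$$\pi_1(S^3 \setminus K) \;\cong\; \frac{\pi_1(U) \star \pi_1(V)}{N} \;\cong\; \frac{\pi_1(\mathbb{R}^3 \setminus K) \star 0}{N}.$$
Because $\pi_1(U \cap V) = 0$, the normal subgroup $N$ is generated by the empty word and is therefore trivial, so the quotient collapses to $\pi_1(\mathbb{R}^3 \setminus K)$, establishing the desired isomorphism.
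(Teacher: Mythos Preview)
Your proposal is correct and follows essentially the same route as the paper: both decompose $S^3\setminus K$ for van Kampen into $\mathbb{R}^3\setminus K$ and a simply connected neighborhood of the point at infinity, with simply connected overlap. The only cosmetic difference is that the paper realizes $S^3$ via the stereographic projection $\phi:S^3\setminus p\to\mathbb{R}^3$ and writes the intersection as $S^2\times\mathbb{R}$, whereas you work directly with the one-point compactification and deformation retract the overlap onto $S^2$; your explicit contraction of $V$ to $\infty$ is also a bit more careful than the paper's one-line ``since $B$ is a ball.''
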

\begin{proof}
Let $\phi: S^3\setminus p\to \mathbb{R}^3$ be the stereographic projection as in Proposition \ref{s2p5}, where $p$ is the north pole. We first show that $\phi^{-1}(K)\cong K.$ To show this, define $f:K\to \phi^{-1}(K)$ by $f(z)=\phi^{-1}(z)$, and $f^{-1}:\phi^{-1}(K)\to K$ by $f^{-1}(z)=\phi(z).$ The maps $f$ and $f^{-1}$ are continuous since $\phi$ is a homeomorphism as stated in proposition \ref{s2p5}. And since
$$f\circ f^{-1}(z)=f(\phi(z))=\phi^{-1}(\phi(z))=z=\mathrm{Id}_{\phi^{-1}(K)},\text{ }$$
$$f^{-1}\circ f(z)=f^{-1}(\phi^{-1}(z))=\phi(\phi^{-1}(z))=z=\mathrm{Id}_K,$$
then we conclude that $f$ is a bijection. Thus $\phi^{-1}(K)\cong K.$ Also, by taking $n=3$ in Proposition \ref{s2p5}, one has $S^3\setminus \{p\}\cong \mathbb{R}^3.$

Next, we will use Van Kampen's theorem to show that $\pi_1(S^3\setminus K)\cong \pi_1(\mathbb{R}^3\setminus K)$. To do this, we suppose that $B_1$ is a large closed ball in $\mathbb{R}^3$ which contains $K$. Define $B=\phi^{-1}(\mathbb{R}^3\setminus B_1)\cup \{p\}$, then
\begin{equation}
S^3\setminus \phi^{-1}(K)=\left((S^3\setminus \{p\})\setminus \phi^{-1}(K)\right)\cup B. \label{s3e1}
\end{equation} 
But then, $S^3\setminus \{p\}\cong \mathbb{R}^3,$ and $\phi^{-1}(K)\cong K$, hence \eqref{s3e1} becomes
\begin{equation}
S^3\setminus K\cong (\mathbb{R}^3\setminus K)\cup B.\label{s3e2}
\end{equation} 
The space $B=\phi^{-1}(\mathbb{R}^3\setminus B_1)\cup \{p\}\cong (\mathbb{R}^3\setminus B_1)\cup \{p\}$ since $\phi^{-1}(\mathbb{R}^3\setminus B_1)\cong \mathbb{R}^3\setminus B_1$. Thus, ($\mathbb{R}^3\setminus K)\cap B\cong S^2\times \mathbb{R}$ as illustrated in Figure \ref{fig3.1}.
\begin{figure}[htbp!]
	\centering
	\includegraphics[width=0.8\textwidth]{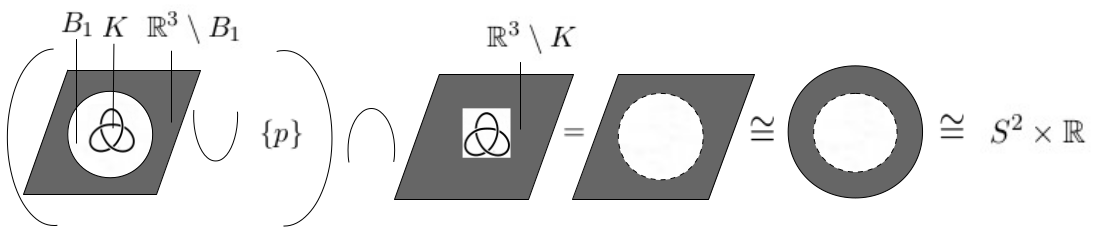}
	\caption{$B\cap (\mathbb{R}^3\setminus K).$}\label{fig3.1}
\end{figure}
 Certainly,  $\mathbb{R}^3\setminus K, B$ and $S^2\times \mathbb{R}$ are open and  
path-connected. Let $x_0\in S^2\times \mathbb{R}$ be the basepoint. Then 
by van Kampen's theorem, we have that
\begin{equation}
\pi_1(S^3\setminus K)\cong \dfrac{\pi_1(\mathbb{R}^3\setminus K)\star \pi_1(B)}{N},\label{s3e3}
\end{equation}
where $N$ is the normal subgroup generated by $i_1(g)i_2(g)^{-1}\in\pi_1(\mathbb{R}^3\setminus K)\star\pi_1(B),~g\in \pi_1(S^2\times \mathbb{R})$.  

But $\pi_1(S^2\times \mathbb{R})\cong \pi_1(S^2)\times \pi_1(\mathbb{R})$ by Proposition \ref{c2p2417}. Also by Examples \ref{c2e255} and \ref{c2e263}, we have that $\pi_1(S^2)\cong 0$ and $\pi_1(\mathbb{R})\cong 0$. Therefore $\pi_1(S^2\times\mathbb{R})\cong \pi_1(S^2)\times\pi_1(R)\cong 0\times 0\cong 0$. Hence if $g\in\pi_1(S^2\times \mathbb{R})$, then $g=0.$ Thus $i_1(g)i_2(g)^{-1}=e,$ the empty word in
 $\pi_1(\mathbb{R}^3\setminus K)\star \pi_1(B).$ Therefore $N=\langle i_1(g)i_2(g)^{-1}\rangle\cong \langle e\rangle =0$. Also, since $B$ is a ball, then $\pi_1(B)\cong 0.$ Hence \eqref{s3e3} becomes 
\begin{equation}
\pi_1(S^3\setminus K)\cong \pi_1(\mathbb{R}^3\setminus K).\label{s3e4}
\end{equation}
\end{proof}
\vspace{-10pt}
\subsection{ Deformation Retraction of $\boldsymbol{{\rm I\!R}^3\setminus K}$ onto a 2-dimensional Complex, \boldmath{$X$}}
The next thing we will do is to show that $S^3\setminus K$ deformation retracts onto a 2-dimensional complex $X=X_{m,n}$ which is homeomorphic to the quotient space of a cylinder $S^1\times I$ under the identifications $(z,0)\sim (e^{\frac{2\pi i}{m}}z,0)$ and $(z,1)\sim (e^{\frac{2\pi i}{n}}z,1)$. This will allow us to conclude that $\pi_1(S^3\setminus K)\cong \pi_1(X)$ as shown in Theorem \ref{c2t257}.

	First of all, we need to define the space $X=X_{m,n}.$ But before then, we will define the space $X_m$ and $X_n$ as follows. 	
	We regard $S^3$ as $S^3\cong (S^1\times D^2)\cup(D^2\times S^1)$. Let us identify the first solid torus, $S^1\times D^2$, with the compact region of $\mathbb{R}^3$ bounded by the standard torus $S^1\times S^1$ containing $K$. The second torus $D^2\times S^1$ is then the closure of the complement of the first solid torus, together with the compactification point at infinity.
	
	Consider the first solid torus, and let $x\in S^1.$ Then, one has the meridian disk $\{x\}\times D^2$. Since $f:S^1\to S^1\times S^1$ (this is the map that defines $K$) is an embedding, and since $K$ winds around the torus a total of $m$ times in the longitudinal direction, the knot $K$ intersects the meridian circle $\{x\}\times \partial D^2$ in $m$ equally spaced points, say $P_1, P_2,\dots, P_m$. For $i\in\{1,\dots, m-1\}$, let $P_{i(i+1)}$ be the mid-point of the arc $P_iP_{i+1}$, and let $P_{m1}$ be the mid-point of the arc $P_mP_1$. See Figure~\ref{fig:m} for the case when $m=3$.
	\begin{figure}[htbp!]
		\centering
		\includegraphics[width=0.25\textwidth]{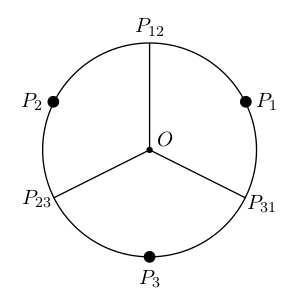}
		\caption{The case when $m=3.$}\label{fig:m}
	\end{figure}
	Consider the line segments $OP_{i(i+1)}, ~1\le i\le m-1$, and $OP_{m1}.$ Define $S_x$ by $S_x=\left(\underset{i=1}{\cup}OP_{i(i+1)}\right)\cup OP_{m1}.$
	In other words, $S_x$ is the union of all those line segments. Now, define
	$X_m=\underset{x\in S^1}{\cup}S_x.$
	The space $X_m$ is the space traced out by the line segments when $x$ runs over $S^1$. Similarly, by considering the second solid torus $D^2\times S^1$, one defines $X_n$ as $X_n=\underset{z\in S^1}{\cup}T_z.$ Now, we define the space $X$ as 
	\begin{equation}
		X=X_m\cup X_n.\label{c3e315}
	\end{equation}
	By this definition of $X$, we can see that $X$ is a subspace of $S^3\setminus K.$ Again by definition of $X_m$ and $X_n$, one can see that $X_m$ is the mapping cylinder of the map $\alpha_m:S^1\to S^1$ by $\alpha_m(z)=z^m$. And $X_n$ is the mapping cylinder of the map $\alpha_n:S^1\to S^1$ by $\alpha_n(z)-z^n.$ Now, we will show the following result.
	\begin{pro}
		(a) The space $X_m$ we just defined is homeomorphic to the quotient space of the cylinder $S^1\times [0,\frac{1}{2}]$, under the identification $(z,0)\sim (e^{\frac{2\pi i}{m}}z,0)$.
		
		(b) The space $X_n$ we just defined is homeomorphic to the quotient space of the cylinder $S^1\times [\frac{1}{2},1]$, under the identification $(z,1)\sim (e^{\frac{2\pi i}{n}}z,1).$\label{c3p313} 
	\end{pro}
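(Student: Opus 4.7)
The plan is to construct an explicit continuous bijection from the stated quotient space onto $X_m$ and to upgrade it to a homeomorphism using compactness. Throughout, write the first solid torus as $S^1 \times D^2$, denote by $O_x = (x,0) \in \{x\}\times D^2$ the center of the meridian disk over $x \in S^1$, and recall that $K$ meets $\{x\}\times \partial D^2$ in $m$ equally spaced points. Let $M \subseteq S^1 \times \partial D^2$ be the union over $x \in S^1$ of the $m$ arc-midpoints on the meridian over $x$.

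The first step is to show that $M$ is a single simple closed curve that winds $m$ times longitudinally (and $n$ times meridionally, parallel to $K$). This is where the coprimality hypothesis is used: as $x$ moves once around $S^1$, the rigid configuration of $m$ midpoints on $\{x\}\times \partial D^2$ rotates through an angle $2\pi n/m$, and since $\gcd(m,n) = 1$ the induced permutation of the $m$ midpoints is a single $m$-cycle. Using this, I fix a homeomorphism $S^1 \to M$, $w \mapsto P_w$, whose composition with the projection $S^1 \times \partial D^2 \to S^1$ onto the first factor is precisely $w \mapsto w^m$.

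Next, define
\[
\widetilde{\phi}: S^1 \times [0, 1/2] \to X_m, \qquad \widetilde{\phi}(w, t) = (1-2t)\, O_{w^m} + 2t\, P_w,
\]
where the right-hand side is the point at affine parameter $2t$ along the line segment from $O_{w^m}$ to $P_w$ inside the meridian disk over $w^m$. This map is continuous, and its image is exactly the union of all spokes, which by definition is $X_m$. At $t = 0$ the image $O_{w^m}$ depends only on $w^m$, so $\widetilde{\phi}(w, 0) = \widetilde{\phi}(e^{2\pi i/m}w, 0)$; hence $\widetilde{\phi}$ descends to a continuous surjection $\phi: Q_m \to X_m$, where $Q_m = (S^1 \times [0, 1/2])/\{(z,0)\sim (e^{2\pi i/m}z, 0)\}$.

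Injectivity of $\phi$ is read off from the spoke structure: if $\widetilde{\phi}(w_1, t_1) = \widetilde{\phi}(w_2, t_2)$, then agreement of the meridian coordinate gives $w_1^m = w_2^m$; if the common value $t_i$ is positive, the spokes must coincide, forcing $P_{w_1} = P_{w_2}$ and hence $w_1 = w_2$, after which $t_1 = t_2$ is immediate; if instead $t_1 = 0$, the image is a center, so $t_2 = 0$ and $w_1^m = w_2^m$ is exactly the relation $(w_1, 0) \sim (w_2, 0)$ in $Q_m$. Since $Q_m$ is compact (as the quotient of a compact space) and $X_m \subseteq S^3$ is Hausdorff, the continuous bijection $\phi$ is a homeomorphism, which proves (a). Part (b) follows by the same argument applied to the second solid torus $D^2 \times S^1$, with $K$ now meeting each of its meridian disks in $n$ points; the coprimality is symmetric, so the corresponding outer midpoint locus is again a single simple closed curve, parametrized by $w \in S^1$ with projection $w \mapsto w^n$ onto the $S^1$-factor. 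The main obstacle is genuinely the first step: establishing that $M$ is connected and simple. Once $w \mapsto P_w$ is a homeomorphism $S^1 \to M$, everything else reduces to a compact-to-Hausdorff argument.
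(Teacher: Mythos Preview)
Your proof is correct and takes a different route from the paper's. The paper does not work directly with $X_m$ as a subspace of the solid torus; instead, it invokes the assertion (made without justification just before the proposition) that $X_m$ is the mapping cylinder of $\alpha_m\colon S^1\to S^1$, $z\mapsto z^m$, and then spends the entire proof constructing explicit mutually inverse maps $\overline{\varphi}$ and $\overline{\psi}$ between that abstract mapping cylinder and the quotient $Y=(S^1\times[0,\tfrac12])/{\sim}$. You, by contrast, engage the actual geometric definition of $X_m$: your first step, showing that the midpoint locus $M$ is a single simple closed curve covering the core circle $m$ times via $w\mapsto w^m$, is exactly the content the paper sweeps into the phrase ``one can see that $X_m$ is the mapping cylinder.'' Having done that work, you then economize on the other end by using the compact-to-Hausdorff criterion instead of writing down an inverse. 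So your argument is more self-contained on the geometric side, while the paper's is more explicit on the formal quotient-space side but leaves the key geometric identification as an exercise for the reader.
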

	\begin{proof}
		Let $Y$ be the quotient space of $S^1\times [0,\frac{1}{2}]$, under the identification $(z,0)\sim (e^{\frac{2\pi i}{m}}z,0)$. So $Y=\dfrac{S^1\times [0,\frac{1}{2}]}{\sim}$. Since $X_m$ is the mapping cylinder of $\alpha_m:S^1\to S^1, z\mapsto z^m,$ we have 
		$$X_m=\dfrac{(S^1\times [0,1])\amalg S^1}{\sim'},$$
		where $\sim'$ is the relation: $(z,0)\sim' \alpha_m(z)=z^m.$
		
		We will define two maps $\overline{\varphi}: X_m\to Y$ and $\overline{\psi}:Y\to X_m$ such that $\overline{\psi}\overline{\varphi}=\mathrm{Id}_{X_m}$ and $\overline{\varphi}\overline{\psi}=\mathrm{Id}_Y.$
		
		$\bullet$ \underline{Defining $\overline{\varphi}$} : First define $\varphi:(S^1\times [0,1])\amalg S^1\to Y$ by  $$\varphi(z,t)=[(z,t/2)], 0\le t\le 1, \text{ and } \varphi(z)=[(e^{\frac{i\theta}{m}},0)]\text{ if }z=e^{i\theta}.$$
		Clearly, if $(z,0)\sim' z^m,$ then $\varphi(z,0)=\varphi(z^m)$. So $\varphi$ passes to the quotient and gives rise to a map
		\begin{eqnarray*}
		\overline{\varphi}:\frac{(S^1\times [0,1])\amalg S^1}{\sim'}&\to& Y,\text{ defined by }\overline{\varphi}([x])=\left[\varphi(x)\right].
		\end{eqnarray*}
		By this definition, $\overline{\varphi}$ is continuous. 
		
		$\bullet$ \underline{Defining $\overline{\psi}$} : First define $\psi:S^1\times[0,1]\to X_m$ by 
		$$\psi(z,0)=[z^m]=[(z,0)],\text{ and }\psi(z,t)=[(z,2t)],~ 0<t\le \frac{1}{2}.$$
		If $(z,0)\sim (e^{\frac{2\pi i}{m}}z,0)$, then $\psi(z,0)=[(z,0)]=\psi(e^{\frac{2\pi i}{m}}z,0)$. Passing to the quotient, we get 
		$$\overline{\psi}: \frac{S^1\times [0,\frac{1}{2}]}{\sim}\to X_m,\text{ defined by } \overline{\psi}([x])=[\psi(x)].$$
		Again, by definition, $\overline{\psi}$ is continuous. Now, we will show that $\overline{\psi}\overline{\varphi}=\mathrm{Id}_{X_m}$ and $\overline{\varphi}\overline{\psi}=\mathrm{Id}_{Y}$. For the case when $t\ne 0$, we suppose that $[z,t]\in X_m$, and $[x,t]\in Y$, we then have that
		$$\overline{\psi}\overline{\varphi}([z,t])=\overline{\psi}([\varphi(z,t)])=\overline{\psi}([z,t/2])=[\psi(z,t/2])]=[z,t]=\mathrm{Id}_{X_m},\text{ and }$$
		$$\overline{\varphi}\overline{\psi}([x,t])=\overline{\varphi}([\psi(x,t)])=\overline{\varphi}([(x,2t)])=[\overline{\varphi}(x,2t)]=[x,t]=\mathrm{Id}_Y.$$
		For the case when $t=0$, we have that
		$$\overline{\psi}\overline{\varphi}([z,0])=\overline{\psi}([\varphi(z,0)])=\overline{\psi}([z,0])=[\psi(z,0)]=[z^m]=[z,0]=\mathrm{Id}_{X_m}([z,0]),\text{ and }$$
		$$\overline{\varphi}\overline{\psi}([x,0])=\overline{\varphi}([\psi(x,0)])=\overline{\varphi}([x^m])=\overline{\varphi}([x,0])=[\varphi(x,0)]=[x,0]=\mathrm{Id}_{Y}.$$
		Therefore $X_m\cong Y.$
		
		Part (b) can be handled in the same way.
	\end{proof}
	\begin{pro}
		Let $X$ be as defined in \eqref{c3e315}. Then $X$ is homeomorphic to the quotient space of $S^1\times I$ under the identifications $(z,0)\sim (e^{\frac{2\pi i}{m}}z,0)$ and $(z,1)\sim (e^{\frac{2\pi i}{n}}z,1)$.\label{c3p314}
	\end{pro}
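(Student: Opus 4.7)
The plan is to glue the two homeomorphisms from Proposition \ref{c3p313} along a common middle circle. Write $Y_m := (S^1 \times [0,\frac{1}{2}])/\sim_m$ and $Y_n := (S^1 \times [\frac{1}{2},1])/\sim_n$ with the end-identifications stated there, and set $Y := (S^1 \times I)/\sim$. Since the identifications in $\sim$ affect only $t=0$ and $t=1$, we have a natural decomposition $Y = Y_m \cup Y_n$ with $Y_m \cap Y_n = S^1 \times \{\frac{1}{2}\}$. It therefore suffices to produce homeomorphisms $\Phi_m : X_m \to Y_m$ and $\Phi_n : X_n \to Y_n$ whose restrictions to $X_m \cap X_n$ coincide; the pasting lemma will then deliver a continuous bijection $\Phi : X \to Y$, which must be a homeomorphism since $X$ is compact and $Y$ is Hausdorff.

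First I would identify the intersection $C := X_m \cap X_n$. Because $X_m$ lies in the first solid torus and $X_n$ in the second, $C$ must sit on the boundary torus $T = S^1 \times S^1$. Writing $(x,y) = (e^{i\alpha}, e^{i\beta})$ and computing the midpoints between the $m$ consecutive strands of $K$ on the meridian $\{x\} \times S^1$ gives $X_m \cap T = \{(x,y) \in T : y^m + x^n = 0\}$; the parallel calculation for longitudes produces the same set for $X_n \cap T$. Since $\gcd(m,n) = 1$, this common set $C$ is a single simple closed curve. Tracing the homeomorphism $\overline{\psi}$ built in the proof of Proposition \ref{c3p313} shows that $\Phi_m$ sends $C$ homeomorphically onto $S^1 \times \{\frac{1}{2}\} \subset Y_m$, and likewise for $\Phi_n$.

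Next I would match the two boundary restrictions $\Phi_m|_C$ and $\Phi_n|_C$, which are homeomorphisms $C \to S^1$ and so differ by a self-homeomorphism $h$ of $S^1$. Assuming $h$ is orientation-preserving, isotope it to the identity through a family $\{h_t\}_{t \in [\frac{1}{2},1]}$ with $h_{1/2} = h$ and $h_1 = \mathrm{Id}$, and set $H(z,t) := (h_t(z), t)$. Because $h_1 = \mathrm{Id}$, the map $H$ respects the identification $(z,1) \sim_n (e^{2\pi i/n} z, 1)$ and descends to a self-homeomorphism $\bar{H}$ of $Y_n$. Replacing $\Phi_n$ by $\bar{H} \circ \Phi_n$ makes the two restrictions on $C$ agree, completing the gluing.

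The main obstacle is the compatibility step: one must verify both that the two geometrically defined midpoint curves coincide as subsets of $T$ and that the resulting parameterizations of $C$ differ by an orientation-preserving map. Both points reduce to direct calculations using the formula $f(z) = (z^m, z^n)$ for $K$ together with the explicit mapping-cylinder homeomorphisms of Proposition \ref{c3p313}, but they must be carried out carefully because the assignment ``which sheet of $X_m$ corresponds to which point of $C$'' depends delicately on $m$ and $n$.
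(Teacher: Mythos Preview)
Your approach is essentially the same as the paper's: both deduce the result by gluing the two homeomorphisms of Proposition~\ref{c3p313} along the middle circle. The paper's proof is in fact just the single sentence ``Since $X=X_m\cup X_n$ from \eqref{c3e315}, then by Proposition~\ref{c3p313} above, the result follows,'' so your analysis of $X_m\cap X_n$ on the boundary torus and the isotopy adjustment of the boundary parametrizations supplies precisely the compatibility verification that the paper omits.
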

	\begin{proof}
		Since $X=X_m\cup X_n$ from \eqref{c3e315}. Then by Proposition \eqref{c3p313} above, the result follows.
	\end{proof}
	\newpage
	Thanks to Proposition \ref{c3p314} above, $X_m\cap X_n=S^1\times\{\frac{1}{2}\}$.
	Finally, we will show that $S^3\setminus K$ deformation retracts onto $X$. The idea is 
	\begin{itemize}
		\item to construct a deformation retraction of the first solid torus $S^1\times D^2$ minus $K$ onto $X_m$, and 
		\item to construct a deformation retraction of the second solid torus $D^2\times S^1$ minus $K$ onto $X_n$
	\end{itemize}
	such that they both agree on the intersection $S^1\times S^1\setminus K.$
	
	For the deformation retraction of $(S^1\times D^2)\setminus K$ onto $X_m$, we will construct a deformation retraction on each meridian disk. Let $x\in S^1$. Need to define a continuous map	 
	$(\{x\}\times D^2)\setminus K\to S_x$. To do this, it suffices to show that each region (for example, when $m=3$, we have 3 regions, each contains only one point $P_i$) deformation retracts onto the radial line segments that bound it.
	
	So we have $m$ regions namely, $R_1, R_2, \dots ,R_m$, with $P_i\in R_i.$ We will show that $R_i\setminus \{P_i\}$ deformation retracts onto the space $OP_{i(i+1)}\cup OP_{(i-1)i}$ in a nice way. This will follow from the following Lemma.
	
	\begin{lem}
		Consider the region (or space) ${R}$ in $\mathbb{R}^2$ in Figure \ref{fig3.3} below. Then the space ${Y=R\setminus\{(0,1)\}}$ deformation retracts onto ${A=OP\cup OQ.}$ \label{c3l315}
		\vspace{-5pt}
		\begin{figure}[htbp!]
			\begin{center}
				\begin{minipage}[b]{0.3\linewidth}
					\centering
					\includegraphics[width=1.\linewidth]{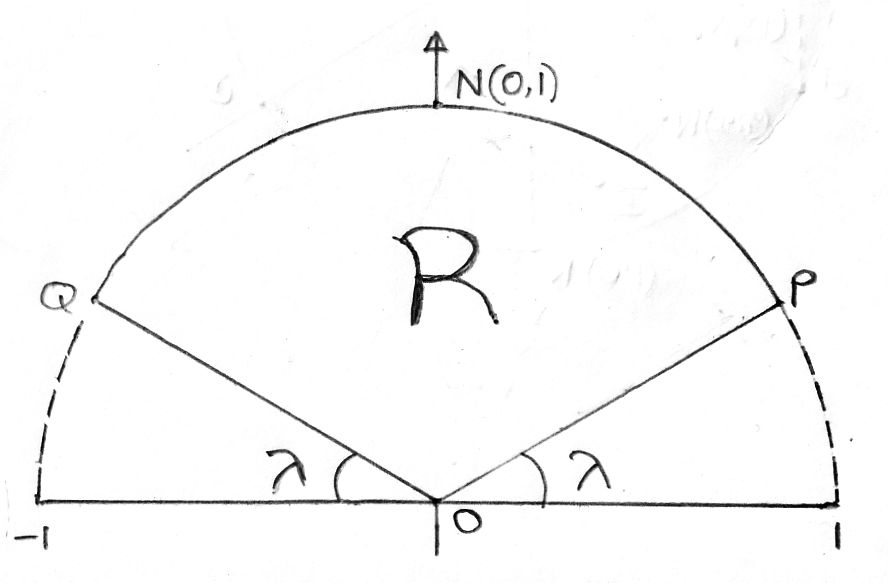} 
					\caption{Region $R$}\label{fig3.3} 
					\vspace{2ex}
				\end{minipage}
				\begin{minipage}[b]{0.3\linewidth}
					\centering
					\includegraphics[width=1.\linewidth]{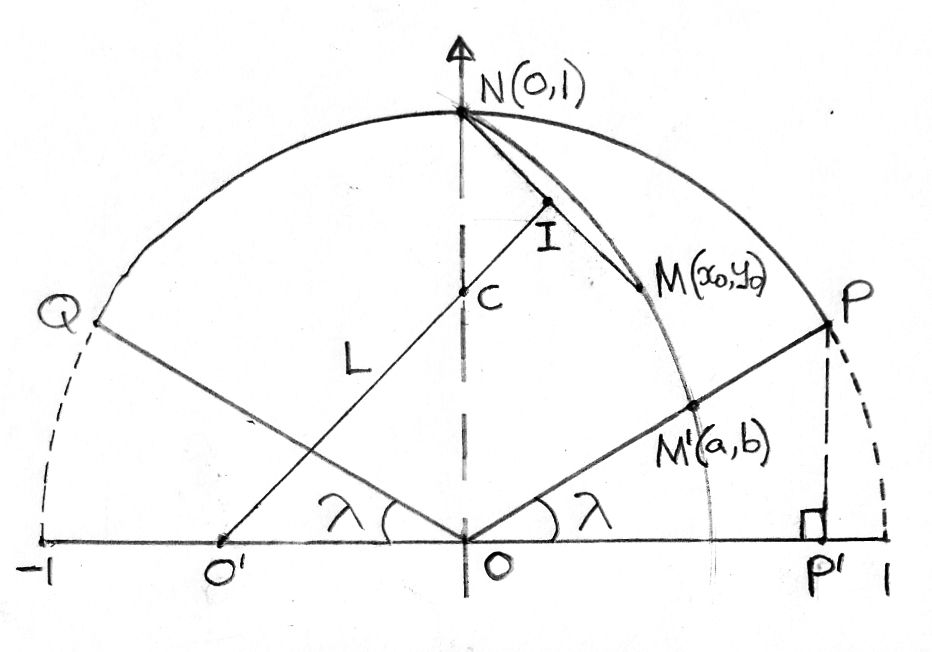} 
					\caption{When $x_0>0$}\label{fig3.4}
					\vspace{1.1ex}
				\end{minipage}
				\begin{minipage}[b]{0.3\linewidth}
					\centering
					\includegraphics[width=1.\linewidth]{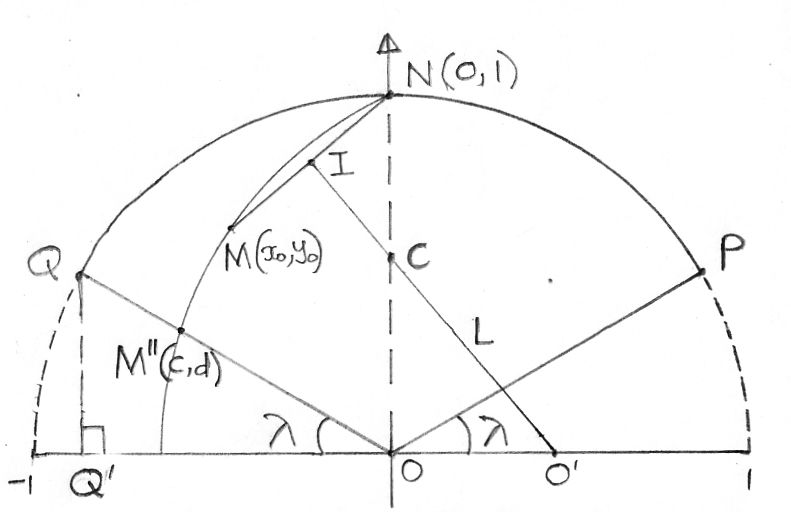} 
					\caption{When $x_0<0$}\label{fig3.5}
					\vspace{2.3ex}  
				\end{minipage}
			\end{center}   
		\end{figure}
	\end{lem}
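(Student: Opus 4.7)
The plan is to construct an explicit retraction $r \colon Y \to A$ by ``projecting away from the puncture'' $(0,1)$, and then build the homotopy to the identity by straight-line interpolation. For each $y \in Y$, consider the ray emanating from $(0,1)$ that passes through $y$; this ray meets the wedge $A = OP \cup OQ$ in exactly one point, which we take to be $r(y)$. Geometrically, if the $x$-coordinate of $y$ is positive then $r(y)$ lies on $OP$ (as in Figure~\ref{fig3.4}), if negative then on $OQ$ (as in Figure~\ref{fig3.5}), and if zero then $r(y) = O$. By construction $r|_A = \mathrm{Id}_A$, since for $y \in A$ the ray from $(0,1)$ through $y$ first meets $A$ at $y$ itself.

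Once $r$ is in hand, I would define the homotopy $H \colon Y \times I \to Y$ by the linear interpolation $H(y,t) = (1-t)\, y + t\, r(y)$, so that $H(y,0) = y = \mathrm{Id}_Y(y)$ and $H(y,1) = r(y) = (i \circ r)(y)$, where $i \colon A \hookrightarrow Y$ is the inclusion. The key geometric check is that this path stays in $Y$, i.e.\ never hits $(0,1)$. But the three points $(0,1)$, $y$, and $r(y)$ are collinear by construction, with $y$ lying strictly between $(0,1)$ and $r(y)$ along the ray; hence the closed segment from $y$ to $r(y)$ lies entirely in the open half-ray away from $(0,1)$ and misses the puncture. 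Convexity of the sub-region of $R$ cut off by the segment from $O$ to $(0,1)$ further guarantees that the segment stays inside $R$, so $H$ indeed takes values in $Y$.

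The main obstacle is verifying continuity of $r$, particularly at points $(0, y_0)$ with $y_0 < 1$ where the defining formulas for the two cases must glue. I would parametrize the ray as $\{(0,1) + s(y-(0,1)) : s \ge 0\}$ and intersect it with the lines supporting $OP$ and $OQ$; this yields explicit rational expressions in $(x_0, y_0)$ for $r(y)$ in each half-plane. On each open half-plane these expressions are manifestly continuous, and a short limit check shows that both expressions tend to $O$ as $x_0 \to 0^{\pm}$ with $y_0 < 1$ fixed, so they agree on the common boundary segment. Continuity of $H$ then follows from continuity of $r$ together with continuity of scalar multiplication and addition in $\mathbb{R}^2$, which completes the verification that $A$ is a deformation retract of $Y$.
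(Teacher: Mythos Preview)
Your ray-projection map $r$ is not well-defined on all of $Y$. The region $R$ is a sector of the unit disk bounded by the radii $OP$, $OQ$ and the circular arc from $P$ to $Q$ through $N=(0,1)$; the puncture $N$ sits on that arc. Take any point $M$ on the open arc strictly between $N$ and $P$. The line through $N$ and $M$ meets the unit circle only at $N$ and $M$, so the ray from $N$ through $M$ leaves the closed disk at $M$ and never returns; since the segment $OP$ lies inside the closed disk, the ray could only meet $OP$ along the chord $NM$. But that chord has both endpoints on the arc and, by convexity of the sector, stays strictly away from the side $OP$. Hence the ray misses $A=OP\cup OQ$ entirely and $r(M)$ is undefined. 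Concretely, in the half-disk case $P=(1,0)$, $Q=(-1,0)$, the ray from $(0,1)$ through $\bigl(\tfrac{1}{2},\tfrac{\sqrt{3}}{2}\bigr)$ meets the $x$-axis at $x=2+\sqrt{3}$, well outside $[-1,1]$.

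The paper avoids this by flowing along circular arcs rather than straight rays: through $M$ and $N$ one draws the unique circle with centre on the $x$-axis, and $r(M)$ is where that circle meets $OP$ (or $OQ$). When $M$ lies on the unit circle this auxiliary circle \emph{is} the unit circle, so boundary points are carried to $P$ or $Q$ along the boundary arc itself. Besides making $r$ everywhere defined, this buys something your straight-line homotopy would lack even where it is defined: the deformation keeps $\partial D^2\setminus\{N\}$ inside itself throughout. That boundary circle is exactly the torus $S^1\times S^1$ on which the two solid-torus retractions must later be made to agree, so a retraction that drags boundary points through the interior of the meridian disk cannot be glued to its counterpart on the other solid torus.
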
	
	\vspace{-35pt}
	\begin{proof}
	To prove this, we define a retraction $r:Y\to A$ using suitable flows. This will allow us to distort the flows (if necessary) later in such a way that the deformation retracts agree on the intersection of the two solid tori, $S^1\times S^1\setminus K.$
	
	To construct the map $r$, let $M(x_0,y_0)\in Y$. We consider the following three cases: 
	
	$\bullet$ {If $x_0>0$}.
		\begin{enumerate}
			\item Consider the midpoint $I$ of the line segment $MN$.
			\item Draw the line, say $L$, through $I$ and orthogonal to $MN$. Let $O'$ be the intersection point between $L$ and the $x$-axis.
			\item Draw the circle, say $C$, with center $O'$ and radius $O'N$. That is, $C$ is the circle through $N$ with center $O'.$
			\item Let $M'$ be the intersection between $C$ and the line segment $OP$. Define $r(M)=M'.$  
		\end{enumerate}
	$\bullet$ {If $x_0<0$}, one defines $r(M)=M''$, where $M''$ is the intersection between the circle and $OQ.$
	
	$\bullet$ {If $x_0=0$}, define $r(M)=0.$ 
	 
	Next, we will write down the map $r:Y\to A$ explicitly, and then show that it is a deformation retraction 
	of $Y$ onto $A$. We define the map for each case. For the first case, that is, if $x_0>0,$ see Figure \ref{fig3.4} for the construction.
	\newpage
	Since the line $L$ is orthogonal to $MN$, then $S_L=-\dfrac{1}{S_{MN}}$, where $S_L$ is the slope of line $L$, and $S_{MN}$ is the slope of line $MN$. Since $M=(x_0,y_0)$ and $N=(0,1)$, then 
	$$S_{MN}=\frac{1-y_0}{-x_0}=\frac{y_0-1}{x_0}.$$
	Hence, $$S_L=-\frac{x_0}{y_0-1}=\frac{x_0}{1-y_0}.$$
	Since $I$ is the midpoint of $MN$, then the coordinates of $I$ is $\left(\dfrac{x_0}{2},\dfrac{1+y_0}{2}\right)$. Thus, the equation of $L$ is 
	\begin{equation}
	y=\frac{x_0}{1-y_0}x+C,\label{s3e5}
	\end{equation}
	where $C$ is the $y$-intercept. To find $C$, we substitute the coordinates of $I$ into \ref{s3e5}, so that we have 
	$$\frac{1+y_0}{2}=\frac{x_0}{1-y_0}\cdot\frac{x_0}{2}+C\implies C=\frac{1-y_0^2-x_0^2}{2(1-y_0)}.$$
	Hence the equation of line $L$ is 
	\begin{equation}
	y=\frac{x_0}{1-y_0}x+\frac{1-y_0^2-x_0^2}{2(1-y_0)}.\label{s3e6}
	\end{equation}
	Next, we will find the coordinates of the $x$-intercept of line $L$, that is, the coordinates of the point $O'$ in Figure \ref{fig3.4}. At this point, $y=0$, hence we substitute $y=0$ in equation \eqref{s3e6} so that we have 
	$$x=\frac{x_0^2+y_0^2-1}{2x_0}.$$
	Hence the coordinates of the point $O'$ are $\left(\dfrac{x_0^2+y_0^2-1}{2x_0},0\right)$. Now consider the radius $O'N$ in Figure \ref{fig3.4}. This radius is the distance between the points $O'$ and $N$. Let this distance be $d_1$. Since $N=(0,1)$, then 
	\begin{equation}
	 d_1^2=1+\frac{(x_0^2+y_0^2-1)^2}{4x_0^2}=\frac{4x_0^2+(x_0^2+y_0^2-1)^2}{4x_0^2}.\label{s3e7}
	\end{equation}
	If we suppose that the point $M'=(a,b)$, then the distance between $O'$ and $M'$ is given by the relation
	\begin{eqnarray}
	d_2^2&=&\left(a-\frac{(x_0^2+y_0^2-1)}{2x_0}\right)^2+b^2\nonumber\\
	&=&\left(\frac{2ax_0-(x_0^2+y_0^2-1)}{2x_0}\right)^2+b^2\nonumber\\
	&=&\frac{[2ax_0-(x_0^2+y_0^2-1)]^2}{4x_0^2}+b^2\nonumber\\
	&=&\frac{4b^2x_0^2+[2ax_0-(x_0^2+y_0^2-1)]^2}{4x_0^2}.\label{s3e8}
	\end{eqnarray}
	But then $O'N=O'M'$ since they are both radii of the circle $C$. Hence, $d_1^2=d_2^2$. Therefore, we have
	$$
		\frac{4x_0^2+(x_0^2+y_0^2-1)^2}{4x_0^2}=\frac{4b^2x_0^2+[2ax_0-(x_0^2+y_0^2-1)]^2}{4x_0^2}.$$
	\begin{equation}\text{This then implies that: } \frac{4x_0^2+(x_0^2+y_0^2-1)^2}{4x_0^2}=\frac{4b^2x_0^2+4a^2x_0^2-4ax_0(x_0^2+y_0^2-1)+(x_0^2+y_0^2-1)^2}{4x_0^2}.\label{s3e9}
	\end{equation}
	\newpage
	By simplifying equation \eqref{s3e9}, we have that 
	\begin{equation}
	x_0^2=b^2x_0+a^2x_0-a(x_0^2+y_0^2-1)\label{s3e10}
	\end{equation}
	Now, consider the right angle triangle $OPP'$. Since $|OP|=1$, then $|OP'|=\cos(\lambda)$ and $|PP'|=\sin(\lambda)$. Hence the coordinates of the point $P$ are $(\cos(\lambda),\sin(\lambda)).$ Hence the slope of $OP$ is $\dfrac{\sin(\lambda)}{\cos(\lambda)}=\tan(\lambda)$. Also, the slope of $OM'$ is $\dfrac{b}{a}$. But then, slope of $OP$ is equal to slope of $OM'$. Hence 
	\begin{equation}
	\tan(\lambda)=\frac{b}{a}\implies b-a\tan(\lambda)=0.\label{s3e11}
	\end{equation}
	Solving equations \eqref{s3e10} and \eqref{s3e11} simultaneously, we have that 
	\begin{eqnarray*}
	a&=&\frac{x_0^2+y_0^2+\sqrt{x_0^4+y_0^4+2x_0^2(2t^2+1)+2y_0^2(x_0^2-1)+1}-1}{2x_0(t^2+1)},\text{ and }\\
	b&=&\frac{tx_0^2+ty_0^2+t\left(\sqrt{4t^2x_0^2+x_0^4+2x_0^2y_0^2+y_0^4+2x_0^2-2y_0^2+1}-1\right)}{2x_0(t^2+1)},
	\end{eqnarray*}
	where $t=\tan(\lambda).$
	
	Now for the case when $x_0<0,$ suppose that the coordinates of the point $M''$ are $(c,d)$. Equation \eqref{s3e10} still hold for this case, that is,
	\begin{equation}
	x_0^2=d^2x_0+c^2x_0-c(x_0^2+y_0^2-1).\label{s3e12}
	\end{equation}
	Now, consider right angle triangle $OQQ'$ in Figure \ref{fig3.5}.
	Since $|OQ|=1$, then $|OQ'|=\cos(\lambda)$ and $|QQ'|=\sin(\lambda).$ Hence the coordinates of the point $Q$ are $(-\cos(\lambda),\sin(\lambda)).$ Therefore, the slope of $OQ$ is $-\dfrac{\sin(\lambda)}{\cos(\lambda)}=-\tan(\lambda)$. Also, the slope of  $OM''$ is $\dfrac{d}{c}$. But then, slope of $OQ$ is equal to slope of $OM''$. Thus
	\begin{equation}
	-\tan(\lambda)=\frac{d}{c}\implies d+c\tan(\lambda)=0.\label{s3e13}
	\end{equation}
	Solving \eqref{s3e12} and \eqref{s3e13} simultaneously, we have that
	\begin{eqnarray*}
	c&=&\frac{x_0^2+y_0^2-\sqrt{x_0^4+y_0^4+2x_0^2(2t^2+1)+2y_0^2(x_0^2-1)+1}-1}{2x_0(t^2+1)},\text{ and }\\
	d&=&-\frac{tx_0^2+ty_0^2-t\left(\sqrt{4t^2x_0^2+x_0^4+2x_0^2y_0^2+y_0^4+2x_0^2-2y_0^2+1}-1\right)}{2x_0(t^2+1)},
	\end{eqnarray*}
	where $t=\tan(\lambda).$
	Therefore, we define the map $r:Y\to A$ by 
	\begin{equation}
	r(x_0,y_0)=
	\begin{cases}
	(a,b),\text{ if }x_0>0\\
	(c,d),\text{ if }x_0<0\\
	(0,0), \text{ if }x_0=0.
	\end{cases}\label{s3f1}
	\end{equation}
	where $a,b,c,d$ are as above. By construction, $r$ is continuous. Now we prove that $r$ is a deformation retract of $Y$ onto $A$.
	Let $h$ be the distance from the origin to any point $T(p_0,q_0)$ on $OP$ or $OQ$. Then the coordinate of $T$ is 
	\begin{equation}
	(p_0,q_0)=
	\begin{cases}
	 (h\cos(\lambda),h\sin(\lambda)),\text{ if }p_0>0\\
	 (-h\cos(\lambda),h\sin(\lambda)),\text{ if }p_0<0\\
	 (0,0),\text{ if }p_0=0.
	\end{cases}\label{s3e14}
	\end{equation}
	 Hence, every element of $A=OP\cup OQ$ has the form in \eqref{s3e14} above.  So to show that $r$ is indeed a
	 \newpage
	   retraction, we suppose that $(x_0,y_0)=(h\cos(\lambda),h\sin(\lambda))\in OP\subset A$ is arbitrary. This is for the case when $x_0>0$. So by substituting $x_0=h\cos(\lambda)$ and $y_0=h\sin(\lambda)$ into \eqref{s3f1}, we have that $$r(x_0,y_0)= (x_0,y_0).$$ 
	   Since $(x_0,y_0)$ is arbitrary, then $r(x_0,y_0)=(x_0,y_0),~\forall (x_0,y_0)\in OP$. Similarly, one can show for the case when $x_0<0,$ that is, when $(x_0,y_0)\in OQ$. For the case when $x_0=0,$ equation \eqref{s3e14} says that $(x_0,y_0)=(0,0)$, and thus $r(x_0,y_0)=(0,0)$ from \eqref{s3f1}. Therefore, $r$ is indeed a retraction.
	   
	   Now to conclude that $r$ is a deformation retract, we will show that $i\circ r\sim \mathrm{Id}_Y$, where $i:A\to Y$ is the inclusion map. The homotopy $F:Y\times [0,1]\to Y$ follows the circle when $x_0\ne 0$. But if $x_0=0,$ $F$ is a linear homotopy along the y-axis from $(0,y_0)$ to $(0,0)$. Hence $r$ is a deformation retract.
	\end{proof}	 
	\begin{pro} Let $X_m$ and $X_n$ be as defined in Proposition \ref{c3p313}. Then the spaces $(S^1\times D^2)\setminus K$ and $(D^2\times S^1)\setminus K$, where $K$ is a torus knot, deformation retract onto $X_m$ and $X_n$ respectively.
	\end{pro}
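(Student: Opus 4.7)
The plan is to establish the deformation retraction meridian disk by meridian disk, then let the meridian parameter vary continuously. I treat the first solid torus $S^1\times D^2$; the case of $D^2\times S^1$ is identical with $m$ replaced by $n$ and the two factors swapped. Fix $x\in S^1$ and consider the meridian disk $\{x\}\times D^2$. Since $f$ is an embedding and $K$ winds $m$ times longitudinally, $K$ meets $\{x\}\times \partial D^2$ in exactly $m$ equally-spaced points $P_1(x),\ldots,P_m(x)$. The midpoint segments $OP_{i(i+1)}(x)$ together with $OP_{m1}(x)$ subdivide the punctured disk $(\{x\}\times D^2)\setminus K$ into $m$ congruent circular sectors $R_1(x),\ldots,R_m(x)$, each sector $R_i(x)$ containing the single puncture $P_i(x)$ on its outer arc.

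Next, I apply Lemma \ref{c3l315} inside each sector. After a rigid rotation that places $P_i(x)$ at the point $(0,1)$ and aligns the two bounding radial segments with $OP$ and $OQ$, the sector $R_i(x)\setminus\{P_i(x)\}$ becomes exactly the region $Y$ of the lemma (with half-angle $\lambda=\pi/m$). The lemma then supplies a deformation retraction of $R_i(x)\setminus\{P_i(x)\}$ onto $OP_{(i-1)i}(x)\cup OP_{i(i+1)}(x)$. These $m$ local deformation retractions can be assembled into a single deformation retraction $(\{x\}\times D^2)\setminus K\to S_x$: on each common bounding segment the two adjacent retractions agree, since each is the identity on its bounding segments, and the accompanying homotopies (which follow circular arcs in each sector) likewise fix these segments pointwise. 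This produces a continuous deformation retraction of the punctured meridian disk onto $S_x$.

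Now I vary $x$. The points $P_i(x)$ depend continuously on $x$ because $f$ and the meridian-disk structure are continuous; the rotation taking $R_i(x)$ to the standard sector depends continuously on $x$; and the explicit formulas for the retraction in Lemma \ref{c3l315} are continuous in the puncture position. Hence the family of deformation retractions fits together into a continuous deformation retraction $H\colon\bigl((S^1\times D^2)\setminus K\bigr)\times I\to (S^1\times D^2)\setminus K$ with image $\bigcup_{x\in S^1}S_x=X_m$. The same construction, performed meridian disk by meridian disk in the second solid torus (where $K$ now meets each meridian in $n$ equally-spaced points), produces the deformation retraction of $(D^2\times S^1)\setminus K$ onto $X_n$.

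The routine content is the lemma itself, which is already established. The one place requiring care is the continuity of the family $H$ in $x$, especially near the points where the sectors meet the bounding segments; here the agreement of adjacent retractions on common segments (which is built into the construction, since each side restricts to the identity on the segment and the homotopies fix it) is what makes the global map continuous. I expect that the true difficulty is not this proposition in isolation but the subsequent step of matching the two deformation retractions on the common boundary torus $S^1\times S^1\setminus K$; Lemma \ref{c3l315} was stated with flexible ``flows'' precisely so that the retractions in $S^1\times D^2$ and $D^2\times S^1$ can later be distorted to agree there and assembled into a deformation retraction of $S^3\setminus K$ onto $X=X_m\cup X_n$.
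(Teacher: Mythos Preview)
Your proposal is correct and follows the same approach as the paper, which simply states that the result follows immediately from Lemma~\ref{c3l315}. You have filled in the details the paper omits---applying the lemma sector by sector, gluing along the common radial segments where both sides restrict to the identity, and then letting the meridian parameter vary continuously---and you correctly anticipate that matching the two retractions on $(S^1\times S^1)\setminus K$ is deferred to the next step.
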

	\begin{proof}
		This follows immediately from Lemma \ref{c3l315}.
	\end{proof}  
	   
	   These two deformation retractions do not agree on their common domain, $(S^1 \times S^1 )\setminus K$.
	   This can be corrected by changing flows in the two solid tori so that $(S^1 \times  S^1 ) \setminus K$, both flows
	   are orthogonal to $K$. After this adjustment, we then have that $S^3 \setminus K$ deformation retracts onto $X=X_{m,n}.$  

\subsection{Calculating the Fundamental Group of $\boldsymbol{X=X_{m,n}}$} Now, we will use Van Kampen's theorem to compute $\pi_1(X)$. 
\begin{pro}
	$\pi_1(X)\cong \langle a,b | a^m=b^n\rangle$.
\end{pro}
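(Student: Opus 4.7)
The plan is to apply van Kampen's theorem to the decomposition $X = X_m \cup X_n$ from \eqref{c3e315}, using the quotient description of $X$ provided by Proposition \ref{c3p314}. Since van Kampen needs open sets, I would first take slight open thickenings $U, V \subset X$: let $U$ be the image in $X$ of $S^1 \times [0, 1/2 + \epsilon)$ and $V$ be the image of $S^1 \times (1/2 - \epsilon, 1]$ for some small $\epsilon > 0$. Then $U$ and $V$ are open in $X$, both are path-connected, $X = U \cup V$, and $U \cap V$ is the image of $S^1 \times (1/2 - \epsilon, 1/2 + \epsilon)$, an open annular strip that deformation retracts onto the middle circle $S^1 \times \{1/2\} = X_m \cap X_n$. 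Sliding the collars toward the middle gives deformation retractions of $U$ onto $X_m$ and of $V$ onto $X_n$. I would choose the basepoint $x_0 \in S^1 \times \{1/2\}$.

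Next I would compute the three fundamental groups that feed into van Kampen's theorem. Since $X_m$ is the mapping cylinder of $\alpha_m : S^1 \to S^1$, $z \mapsto z^m$, Proposition \ref{c2pm} yields a deformation retraction of $X_m$ onto the target circle; combined with Theorem \ref{c2t257} and $\pi_1(S^1) \cong \mathbb{Z}$, this gives $\pi_1(U) \cong \pi_1(X_m) \cong \mathbb{Z}$, and I call the generator $a$. Symmetrically $\pi_1(V) \cong \pi_1(X_n) \cong \mathbb{Z}$ with generator $b$, and $\pi_1(U \cap V) \cong \mathbb{Z}$, generated by a once-around loop $\gamma$ on the middle circle based at $x_0$.

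The main subtlety, and the key step, is identifying the images $i_{1*}(\gamma)$ and $i_{2*}(\gamma)$ under the inclusions $U \cap V \hookrightarrow U$ and $U \cap V \hookrightarrow V$. Inside $X_m$, the mapping-cylinder deformation retraction carries the free (source) end circle onto the target circle via the defining map $\alpha_m$; since the middle circle $S^1 \times \{1/2\}$ \emph{is} precisely this source circle, $\gamma$ becomes homotopic in $X_m$ to a loop winding $m$ times around the target circle. Hence $i_{1*}(\gamma) = a^m$ in $\pi_1(X_m)$, and by the same argument $i_{2*}(\gamma) = b^n$ in $\pi_1(X_n)$.

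Assembling the pieces, van Kampen's theorem gives
\begin{equation*}
\pi_1(X) \;\cong\; \frac{\pi_1(U) \star \pi_1(V)}{N} \;\cong\; \frac{\langle a \rangle \star \langle b \rangle}{N},
\end{equation*}
where $N$ is the normal subgroup generated by $i_{1*}(\gamma)\, i_{2*}(\gamma)^{-1} = a^m b^{-n}$. Imposing the single relation $a^m b^{-n} = 1$, equivalently $a^m = b^n$, produces
\begin{equation*}
\pi_1(X) \;\cong\; \langle a, b \mid a^m = b^n \rangle,
\end{equation*}
as claimed. The main obstacle I anticipate is justifying rigorously the identification $i_{1*}(\gamma) = a^m$: it requires tracking how the mapping-cylinder retraction pushes the free end circle onto the target circle via the $m$-fold covering $z \mapsto z^m$, and verifying that the resulting homotopy is indeed a homotopy of based loops (after a small change of basepoint along a path in $X_m$).
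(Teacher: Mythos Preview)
Your proposal is correct and follows essentially the same route as the paper: decompose $X$ into open thickenings of $X_m$ and $X_n$, use the mapping-cylinder retractions (Proposition~\ref{c2pm}) to identify each piece with a circle, and apply van Kampen with the key observation that the middle circle maps to $m$ times (resp.\ $n$ times) a generator. If anything you are more explicit than the paper---you construct $U$ and $V$ concretely from the quotient description and you flag the justification of $i_{1*}(\gamma)=a^m$ as the main subtlety, whereas the paper simply asserts this step.
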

\begin{proof}
To prove this, we suppose that $U$ is an open neighborhood of $X_m$ and $V$ is an open neighborhood of $X_n$ such that
\begin{itemize}
	\item[i)] $U$ and $V$ deformation retract onto $X_m$ and $X_n$ respectively,
	\item[ii)] $U\cup V=X_{m,n}$.
\end{itemize}
The spaces $U$ and $V$ are path connected since $X_m$ and $X_n$ are path connected because they are both homeomorphic to a cylinder as shown earlier. 
Since $U$ and $V$ are open neighborhoods of $X_m$ and $X_n$ respectively, then $U\cap V$ is an open neighborhood of $X_m\cap X_n$. Also, for the fact that $U$ and $V$ deformation retracts onto $X_m$ and $X_n$ respectively, then $U\cap V$ deformation retract onto $X_m\cap X_n$.

The space $U\cap V$ is path connected since $X_m\cap X_n$ is path connected because $X_m\cap X_n$ is a circle.
Let $x_0\in U\cap V$ be the basepoint, then van Kampen's theorem says that
\begin{equation}
\pi_1(X_{m,n})\cong \frac{\pi_1(U)\star \pi_1(V)}{N},\label{s3e16}
\end{equation}
where $N$ is a normal subgroup generated by $i_1(g)i_2(g)^{-1},~g\in\pi_1{(U\cap V)}$. Since $U$ and $V$ deformation retract onto $X_m$ and $X_n$ respectively, then by Theorem \ref{c2t257}, we have that $\pi_1(U)\cong \pi_1(X_m)$ and $\pi_1(V)\cong \pi_1(X_n)$. But then, $X_m$ is the mapping cylinder of the map $\alpha_m:S^1\to S^1$ and also $X_n$ is the mapping cylinder of the map $\alpha_n:S^1\to S^1$. Hence by Proposition \ref{c2pm}, both $X_m$ and $X_n$ deformation retract onto $S^1$. Again, by Theorem \ref{c2t257}, we have that $\pi_1(X_m)\cong \pi_1(S^1)$ and $\pi_1(X_n)\cong \pi_1(S^1)$. Therefore
\begin{eqnarray*}
\pi_1(U)&\cong&\pi_1(X_m)\cong \pi_1(S^1)\cong \mathbb{Z},\\
\pi_1(V)&\cong&\pi_1(X_n)\cong \pi_1(S^1)\cong \mathbb{Z}.
\end{eqnarray*}
Also, since $U\cap V$ deformation retracts onto $X_m\cap X_n$ and for the fact that $X_m\cap X_n=S^1\times\{\frac{1}{2}\}$. Then
$$\pi_1(U\cap V)\cong \pi_1(X_m\cap X_n)=\pi_1(S^1\times \{1/2\})\cong \mathbb{Z}.$$
Let $\alpha$ be a loop in $U\cap V$ which represents a generator of $\pi_1(U\cap V)$, then $\alpha$ is homotopic to a loop in $U$ representing $m$ times a generator, and also homotopic to a loop in $V$ representing $n$ times a generator.
Assuming $\pi_1(U)$ is generated by $a$, and $\pi_1(V)$ is generated by $b$, then $N$ is generated by $\alpha = i_1(g)i_2(g)^{-1}=a^mb^{-n}$. Hence \eqref{s3e16} implies that 
\begin{equation}
\pi_1(X_{m,n})\cong \dfrac{\langle a\rangle \star \langle b\rangle}{\langle a^mb^{-n}\rangle}=\langle{a,b|a^mb^{-n}}\rangle=\langle{a,b|a^m=b^n}\rangle.\label{s3e17}
\end{equation}
\end{proof}
\begin{thm}
	$\pi_1(\mathbb{R}^3\setminus K)\cong \langle a,b | a^m=b^n\rangle.$
\end{thm}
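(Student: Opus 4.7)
The plan is to chain together the three main intermediate results established in this section, since the hard geometric and group-theoretic work has already been carried out. First, I would invoke the proposition that $\pi_1(\mathbb{R}^3 \setminus K) \cong \pi_1(S^3 \setminus K)$, which was obtained by viewing $S^3$ as the one-point compactification of $\mathbb{R}^3$, writing $S^3 \setminus K$ as $(\mathbb{R}^3 \setminus K) \cup B$ where $B$ is a ball-like neighborhood of the point at infinity, and applying van Kampen's theorem together with the fact that $\pi_1(S^2 \times \mathbb{R}) \cong 0$ and $\pi_1(B) \cong 0$, so that the normal subgroup $N$ appearing in the van Kampen quotient is trivial.

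Next, I would appeal to the deformation retraction of $S^3 \setminus K$ onto the two-dimensional complex $X = X_{m,n}$, built by combining a deformation retraction of $(S^1 \times D^2) \setminus K$ onto $X_m$ with one of $(D^2 \times S^1) \setminus K$ onto $X_n$ (each of these coming from Lemma \ref{c3l315} applied fiberwise to meridian disks) and adjusting the flows near $S^1 \times S^1 \setminus K$ so that they agree on the shared boundary torus. Theorem \ref{c2t257} then yields $\pi_1(S^3 \setminus K) \cong \pi_1(X)$.

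Finally, I would invoke the preceding proposition, which computed $\pi_1(X) \cong \langle a, b \mid a^m = b^n \rangle$ by applying van Kampen's theorem to open neighborhoods $U$ and $V$ of $X_m$ and $X_n$, using Proposition \ref{c2pm} to identify $X_m$ and $X_n$ as deformation retracting onto $S^1$ (so that $\pi_1(U) \cong \pi_1(V) \cong \mathbb{Z}$), and noting that a generating loop of $\pi_1(U \cap V) \cong \mathbb{Z}$ is homotopic in $U$ to $m$ times a generator and in $V$ to $n$ times a generator. Composing the three isomorphisms yields the claimed presentation.

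At this stage there is no substantive obstacle: the theorem is essentially a corollary of the earlier work, and the entire argument has already been broken into the three pieces above. The genuine difficulty of the chapter lay upstream, in producing the deformation retraction of $S^3 \setminus K$ onto $X_{m,n}$ (especially the step of reconciling the two solid-torus retractions on their common boundary) and in carrying out the van Kampen computation for $X$; once those are in hand, the only care required here is to note that transitivity of isomorphism is unobstructed because $\mathbb{R}^3 \setminus K$ is path-connected, so the fundamental groups do not depend on the basepoint choices used in each step.
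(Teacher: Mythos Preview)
Your proposal is correct and matches the paper's proof essentially line for line: the paper also treats this theorem as a short corollary, citing the deformation retraction of $S^3\setminus K$ onto $X_{m,n}$ to get $\pi_1(S^3\setminus K)\cong\pi_1(X_{m,n})$, substituting the computation $\pi_1(X_{m,n})\cong\langle a,b\mid a^m=b^n\rangle$, and then invoking $\pi_1(\mathbb{R}^3\setminus K)\cong\pi_1(S^3\setminus K)$. Your additional recap of how each ingredient was obtained is accurate and tracks the earlier subsections.
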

\begin{proof}
We have proved that $S^3\setminus K$ deformation retracts onto $X_{m,n}$. Hence by Theorem \ref{c2t257}, we have that
\begin{equation}
\pi_1(S^3\setminus K)\cong \pi_1(X_{m,n}).\label{s3e18}
\end{equation}
So by substituting \eqref{s3e17} into \eqref{s3e18}, we have that
\begin{equation}
\pi_1(S^3\setminus K)\cong \langle a,b|a^m=b^n\rangle.\label{s3e19}
\end{equation}
From \eqref{s3e4}, we have that $\pi_1(S^3\setminus K)\cong \pi_1(\mathbb{R}^3\setminus K)$. This implies that $\pi_1(\mathbb{R}^3\setminus K)\cong \pi_1(S^3\setminus K)$ because the relation $"\cong"$ is an equivalence relation. Hence 
\begin{equation}
\pi_1(\mathbb{R}^3\setminus K)\cong \langle a,b|a^m=b^n\rangle. \label{s3e20}
\end{equation}
Hence, we conclude that the knot group of a torus knot $K=K_{m,n}$ is $\pi_1(\mathbb{R}^3\setminus K)\cong \langle a,b|a^m=b^n\rangle.$
\end{proof}
\section{Description of the Structure of $\pi_1(\mathbb{R}^3\setminus K)$}
\vspace{-5pt}
In this section, we will describe the structure of the group $G_{m,n}:=\pi_1(\mathbb{R}^3\setminus K)\cong\langle a,b|a^m=b^n\rangle$. We will first show that the group $G_{m,n}$ is infinite cyclic when $m$ or $n$ is 1. Then we will show that the quotient group $G_{m,n}/{C}\cong\mathbb{Z}_m\star\mathbb{Z}_n$ with the assumption that $m,n>1$, where $C$ is the cyclic normal subgroup of $G_{m,n}$ generated by the element $a^m=b^n$. After this, we will show that $C=\langle a^m\rangle=\langle b^n\rangle$ is exactly the center of $G_{m,n}$. Lastly, we will show that $m$ and $n$ are uniquely determined by $G_{m,n}$.

\begin{pro}
	The group ${G_{m,n}}:=\pi_1(\mathbb{R}^3\setminus K)\cong\langle a, b| a^m=b^n\rangle$ is infinite cyclic when ${m}$ or ${n}$ is 1
\end{pro}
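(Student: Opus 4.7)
My plan is to reduce to the case $n=1$ by symmetry (the argument for $m=1$ is identical, swapping the roles of $a$ and $b$). When $n=1$, the defining relation $a^m=b^n$ becomes $a^m=b$, so intuitively the generator $b$ is redundant and the group should collapse to the free group on the single generator $a$, i.e.\ to $\mathbb{Z}$. The task is to make this intuition precise without appealing to Tietze transformations, which have not been developed in the preceding sections.

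The cleanest approach is to exhibit mutually inverse homomorphisms between $G_{m,1}=\langle a,b\mid a^m=b\rangle$ and $\mathbb{Z}$. First I would define $\phi:G_{m,1}\to\mathbb{Z}$ by sending $a\mapsto 1$ and $b\mapsto m$. To check that $\phi$ is well defined, one uses the universal property of group presentations: the map from $\{a,b\}$ to $\mathbb{Z}$ extends to a homomorphism from the free group $F(a,b)$ to $\mathbb{Z}$, and it descends to $G_{m,1}$ because the relator is sent to $\phi(a)^m\phi(b)^{-1}=m-m=0$. In the opposite direction I would define $\psi:\mathbb{Z}\to G_{m,1}$ as the unique homomorphism with $\psi(1)=a$, which exists because $\mathbb{Z}\cong\langle a\mid\ \rangle$ is free on one generator.

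It then remains to verify $\phi\circ\psi=\mathrm{Id}_{\mathbb{Z}}$ and $\psi\circ\phi=\mathrm{Id}_{G_{m,1}}$. The first is immediate from $\phi(\psi(1))=\phi(a)=1$. For the second, it suffices to check equality on the generators $a$ and $b$ of $G_{m,1}$: one computes $\psi(\phi(a))=\psi(1)=a$ and $\psi(\phi(b))=\psi(m)=a^m$, and the relation $a^m=b$ gives $a^m=b$ in $G_{m,1}$. Since $\psi\circ\phi$ and $\mathrm{Id}_{G_{m,1}}$ are homomorphisms agreeing on a generating set, they are equal. Thus $\phi$ is an isomorphism and $G_{m,1}\cong\mathbb{Z}$, which is infinite cyclic.

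There is no real obstacle here; the only subtlety is being careful to invoke the universal property of a presentation when verifying that $\phi$ is well defined, rather than treating elements of $G_{m,1}$ as literal words in $a$ and $b$. The symmetric case $m=1$ is handled by the analogous map $a\mapsto n$, $b\mapsto 1$.
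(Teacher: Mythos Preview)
Your argument is correct and shares the same underlying idea as the paper's proof: when one of the exponents is $1$, the corresponding generator becomes redundant and the group collapses to the infinite cyclic group on the remaining generator.

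The execution differs. The paper simply writes, for $n=1$,
\[
G_{m,n}\cong\langle a,b\mid a^m=b\rangle=\langle a,a^m\rangle=\langle a\rangle=\mathbb{Z},
\]
which is effectively a Tietze transformation (substitute $b=a^m$ and drop the redundant generator and the now-trivial relation), carried out informally. You instead build explicit mutually inverse homomorphisms $\phi:G_{m,1}\to\mathbb{Z}$ and $\psi:\mathbb{Z}\to G_{m,1}$ and verify the compositions on generators, invoking the universal property of the presentation to justify that $\phi$ is well defined. Your route is more self-contained and rigorous given that Tietze moves were not developed earlier; the paper's route is quicker but relies on the reader accepting the presentation manipulation at face value. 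Either way the content is the same: the relation $a^m=b$ expresses $b$ as a word in $a$, so $a$ alone generates.
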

\begin{proof}
	When $m=1$, we have that 
	\begin{eqnarray*}
		G_{m,n}\cong\langle{a,b|a=b^n}\rangle
		=\langle {b^n,b}\rangle
		=\langle {b}\rangle
		=\mathbb{Z}.
	\end{eqnarray*}
The group $\langle {b^n,b}\rangle=\langle {b}\rangle$ since $b^n\in \langle b\rangle$. Also, when $n=1$, we have that
\begin{eqnarray*}
	G_{m,n}\cong\langle{a,b|a^m=b}\rangle
	=\langle {a,a^m}\rangle
	=\langle {a}\rangle
	=\mathbb{Z}.
\end{eqnarray*}
Since $\mathbb{Z}$ is infinite cyclic, then we conclude that whenever $m$ or $n$ is 1, $G_{m,n}$ is infinite cyclic.
\end{proof}
\newpage
\begin{pro}
	The quotient group ${G_{m,n}/C\cong \mathbb{Z}_m\star\mathbb{Z}_n}$, where ${C=\langle a^m\rangle=\langle b^n\rangle}$, and ${m,n>1}.$ 
\end{pro}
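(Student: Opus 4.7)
The plan is to first observe that $C$ is central (hence normal) in $G_{m,n}$, then rewrite the presentation of the quotient and recognize it as the standard presentation of the free product $\mathbb{Z}_m\star\mathbb{Z}_n$, and finally confirm the isomorphism rigorously by constructing mutually inverse homomorphisms.

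First I would verify that the element $c:=a^m=b^n$ lies in $Z(G_{m,n})$. Indeed, $c$ commutes with $a$ because $c=a^m$, and $c$ commutes with $b$ because $c=b^n$; since $a$ and $b$ generate $G_{m,n}$, this shows $c$ is central, so $C=\langle c\rangle$ is a normal subgroup and the quotient $G_{m,n}/C$ makes sense. The equality $\langle a^m\rangle=\langle b^n\rangle$ is immediate from the defining relation $a^m=b^n$.

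For the main step, $C$ is precisely the normal closure of $\{a^m\}$ (equivalently $\{b^n\}$) in $G_{m,n}$, so passing to the quotient amounts to imposing the extra relator $a^m=1$ on the presentation $\langle a,b\mid a^m=b^n\rangle$. This yields
$$G_{m,n}/C\cong\langle a,b\mid a^m=b^n,\ a^m=1\rangle=\langle a,b\mid a^m=1,\ b^n=1\rangle,$$
and the right-hand side is exactly a presentation of $\mathbb{Z}_m\star\mathbb{Z}_n$, since $\mathbb{Z}_m=\langle a\mid a^m=1\rangle$ and $\mathbb{Z}_n=\langle b\mid b^n=1\rangle$ and the free product imposes no further relations between the two factors.

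To make that identification rigorous I would exhibit explicit mutually inverse maps. The assignments $a\mapsto a$ and $b\mapsto b$ (with $a\in\mathbb{Z}_m$ and $b\in\mathbb{Z}_n$) define a homomorphism $\phi:G_{m,n}\to\mathbb{Z}_m\star\mathbb{Z}_n$ because the relation $a^m=b^n$ maps to $1=1$; as $C\subseteq\ker\phi$, it descends to $\overline{\phi}:G_{m,n}/C\to\mathbb{Z}_m\star\mathbb{Z}_n$. Conversely, the universal property of the free product produces $\psi:\mathbb{Z}_m\star\mathbb{Z}_n\to G_{m,n}/C$ from the two maps $\mathbb{Z}_m\to G_{m,n}/C$ and $\mathbb{Z}_n\to G_{m,n}/C$ sending generators to $[a]$ and $[b]$ respectively; these factor maps are well-defined because $[a]^m=[a^m]=1$ and $[b]^n=[b^n]=1$ in the quotient. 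Checking $\overline{\phi}\psi$ and $\psi\overline{\phi}$ on generators shows they are the identity. The only real obstacle is the bookkeeping needed to justify that adjoining a relator to a presentation corresponds to quotienting by the normal closure of that relator, and the explicit construction of $\psi$ via the universal property is the clean way to sidestep it.
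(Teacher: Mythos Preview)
Your proof is correct and follows essentially the same route as the paper: both show $a^m=b^n$ is central (hence $C$ is normal), then kill $a^m$ in the presentation to obtain $\langle a,b\mid a^m=1,\ b^n=1\rangle=\mathbb{Z}_m\star\mathbb{Z}_n$. Your extra step of exhibiting mutually inverse maps via universal properties is more careful than the paper, which simply manipulates the presentation directly, but the underlying argument is the same.
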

\begin{proof}
	First of all, we will show that $C\triangleleft G_{m,n}$, then we will show that $G_{m,n}/C\cong \mathbb{Z}_m\star\mathbb{Z}_n$. 
	
	Since $C=\langle{a^m}\rangle=\langle{b^n}\rangle$, then $C\subset G_{m,n}.$
	Also $C\ne\emptyset$ since $a^m\in C$. Now let $x,y\in C$, then there exists $p,q\in \mathbb{Z}$ such that $x=a^{mp}=b^{np}$, and $y=a^{mq}=b^{nq}.$
	Thus
	\begin{eqnarray*}
		xy^{-1}=a^{mp}a^{-mq}
		=a^{m(p-q)}
		\in C ~~\text{ (since $p-q\in\mathbb{Z}$)}.
	\end{eqnarray*}
	Hence, by Proposition \ref{s2p1}, $C\le G_{m,n}$. Now, let us show that the element $a^m=b^n$ commutes with all the elements of $G_{m,n}$ before we show that $C$ is a normal subgroup of $G_{m,n}.$
	
	Since $a^m=b^n$, then $a^{m}a=aa^{m}$, and $a^{m}b=b^{n}b=bb^{n}=ba^m$. Hence $a^m=b^n$ commutes with $a$ and $b$. But then, $a$ and $b$ are generators of $G_{m,n}$, thus for any element $h\in G_{m,n}$, one has
	\begin{equation}
	a^mh=ha^m, \text{ and } b^nh=hb^n,\text{ for all } h\in G_{m,n}.\label{s32e1}
	\end{equation}
	Now, let $c\in C$ be arbitrary, then there exists $t\in\mathbb{Z}$ such that $c=a^{mt}=b^{nt}$. Also, let $g\in G_{m,n}$ be arbitrary, then
	\begin{eqnarray*}
		gcg^{-1}&=&ga^{mt}g^{-1}
		=a^{mt}gg^{-1}~~\text{(from \eqref{s32e1})}
		=a^{mt}
		=c\in C.
	\end{eqnarray*}
	Hence $C$ is a normal subgroup of $G_{m,n}$. Now, since $G_{m,n}\cong\langle{a,b|a^m=b^n}\rangle$ and $C=\langle{a^m}\rangle=\langle{b^n}\rangle$, then
	\begin{eqnarray*}
		\frac{G_{m,n}}{C}&\cong&\frac{\langle{a,b|a^m=b^n}\rangle}{\langle{a^m}\rangle}\\
		&=&\langle{a,b|a^m=b^n, a^m}\rangle\\
		&=&\langle{a,b|a^m=b^n, a^m=1}\rangle\\
		&=&\langle{a,b|a^m=b^n=1}\rangle\\
		&=&\langle{a|a^m=1}\rangle\star \langle{b|b^n=1}\rangle\\
		&=&\langle{a|a^m}\rangle\star\langle{b|b^n}\rangle\\
		&=&\mathbb{Z}_m\star \mathbb{Z}_n.
	\end{eqnarray*}
\end{proof}
\begin{pro}
	The group ${C=\langle a^m\rangle=\langle b^n\rangle}$ is exactly the center of ${G_{m,n}}.$
\end{pro}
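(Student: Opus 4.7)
The plan is to verify the two inclusions. For $C\subseteq Z(G_{m,n})$, note that in the proof of the preceding proposition it was already shown that the element $a^m=b^n$ commutes with both generators $a$ and $b$, and therefore with every element of $G_{m,n}$. Hence every power of $a^m$ is central, which gives $C=\langle a^m\rangle\subseteq Z(G_{m,n})$.

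For the reverse inclusion, I would pass to the quotient homomorphism $\pi:G_{m,n}\to G_{m,n}/C$. By the previous proposition, $G_{m,n}/C\cong \mathbb{Z}_m\star\mathbb{Z}_n$. Any surjective homomorphism carries the center into the center: if $g\in Z(G_{m,n})$ and $h=\pi(h')$ is any element of the quotient, then $\pi(g)h=\pi(gh')=\pi(h'g)=h\pi(g)$, so $\pi(g)\in Z(\mathbb{Z}_m\star\mathbb{Z}_n)$. Consequently, if one can establish that $Z(\mathbb{Z}_m\star\mathbb{Z}_n)$ is trivial, then every $g\in Z(G_{m,n})$ satisfies $\pi(g)=e$, i.e.\ $g\in\ker\pi=C$, which is exactly what we want.

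The main obstacle is thus the triviality of $Z(\mathbb{Z}_m\star\mathbb{Z}_n)$ when $m,n>1$. I would prove this by taking an arbitrary nonempty reduced word $w=w_1\cdots w_k\in\mathbb{Z}_m\star\mathbb{Z}_n$ and exhibiting an element $v$ with $vw\neq wv$. If $k=1$, so $w$ lies in one factor, take any nonidentity $v$ in the other factor: both $vw$ and $wv$ are then reduced words of length $2$ whose first letters lie in different factors, so they cannot coincide. If $k\geq 2$ and $w_1,w_k$ lie in the same factor, choose a nonidentity $v$ in the other factor; both $vw$ and $wv$ reduce to words of length $k+1$, but their first letters lie in different factors, again ruling out equality. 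Finally, if $k\geq 2$ and $w_1,w_k$ lie in opposite factors, choose a nonidentity $v$ in the same factor as $w_1$; then $wv$ is reduced of length $k+1$, whereas $vw$ collapses on the left to a reduced word of length at most $k$, so once more $vw\neq wv$. The hypothesis $m,n>1$ is used precisely to guarantee that each factor contains a nonidentity element so that such a $v$ can be chosen; combining the three cases shows $Z(\mathbb{Z}_m\star\mathbb{Z}_n)=\{e\}$, which together with the first paragraph completes the proof.
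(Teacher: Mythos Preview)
Your proof is correct and follows essentially the same route as the paper: first $C\subseteq Z(G_{m,n})$ from the commuting relation established earlier, then triviality of $Z(\mathbb{Z}_m\star\mathbb{Z}_n)$ via a reduced-word argument, and finally the passage through the quotient $G_{m,n}\to G_{m,n}/C$ to force $Z(G_{m,n})\subseteq C$. Your case analysis for the centerless free product is somewhat more thorough than the paper's (which argues only by looking at the last letter of $w$), but the strategy is the same.
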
 
\begin{proof}
	We want to show that $C=Z(G_{m,n})$, where $Z(G_{m,n})$ is the center of the group $G_{m,n}$. To do this, let $c\in C$ be arbitrary, then there exists $t\in \mathbb{Z}$ such that $c=a^{mt}=b^{nt}$. Also, let $h\in G_{m,n}$ be arbitrary, then $$ch=a^{mt}h=ha^{mt}~~\text{(from \eqref{s32e1})}=hc.$$
	Hence $c\in Z(G_{m,n})$. Therefore
	\begin{equation}
	C\subseteq Z(G_{m,n}).\label{s32e2}
	\end{equation}
	Next, we will show that $\mathbb{Z}_m\star\mathbb{Z}_n$ has a trivial center. 
	Since $|\mathbb{Z}_m|=m>1$, and $|\mathbb{Z}_n|=n>1$, then there exist $x\in\mathbb{Z}_m$ and $y\in\mathbb{Z}_n$ such that $x\ne e_{\mathbb{Z}_m},\text{ and } y\ne e_{\mathbb{Z}_n}.$
	Let $w\in \mathbb{Z}_m\star\mathbb{Z}_n$ be any nonempty reduced word. Without loss of generality, we suppose that $$w=w_1\dots w_kx_r,$$
	where $x_r$ is a nonempty reduced word of $\mathbb{Z}_m.$
	Let $y\in\mathbb{Z}_n$ such that $y\ne e_{\mathbb{Z}_n}$, then $y\in \mathbb{Z}_m\star \mathbb{Z}_n.$ Hence $yw=yw_1\dots w_kx_r$ must end with $x_r$ after reduction. But $wy=w_1\dots w_kx_ry$ does not end with $x_r$
	\newpage 
	since $x_r$ and $y$ cannot reduce with each other because they come from different group. 
	Therefore, $$yw\ne wy\implies w\notin Z(\mathbb{Z}_m\star \mathbb{Z}_n).$$
	\item Since $w$ is arbitrary reduced word of $\mathbb{Z}_m\star\mathbb{Z}_n$, it follows that $e$, the empty word, is the only element of $Z(\mathbb{Z}_m\star\mathbb{Z}_n)$. Thus $$Z(\mathbb{Z}_m\star \mathbb{Z}_n)=\{e\}.$$
	Now, since $\dfrac{G_{m,n}}{C}\cong \mathbb{Z}_m\star\mathbb{Z}_n$ as shown earlier, then 
	$Z\left(\frac{G_{m,n}}{C}\right)\cong Z(\mathbb{Z}_m\star\mathbb{Z}_n)=\{e\}$. But then, the 
	
	identity element of the quotient group $G_{m,n}/C$ is $C$, hence we have that
	\begin{equation}
	Z\left(G_{m,n}/C\right)\cong\{C\}.\label{s32e3}
	\end{equation}
	Now let us show that $Z(G_{m,n})\subseteq C.$ To do this, we suppose that $z\in Z(G_{m,n})$ is arbitrary. This implies that $ z\in G_{m,n}$, and thus  $zC\in G_{m,n}/C.$
	Now, let $z'C\in G_{m,n}/C$, then
	\begin{eqnarray*}
		zCz'C&=&Czz'C~~(\text{since $C\subseteq Z(G_{m,n})$})\\
		&=&Cz'zC~~(\text{since $z\in Z(G_{m,n})$})\\
		&=&z'CzC.
	\end{eqnarray*}
	Hence $zC\in Z(G_{m,n}/C)$.
	Thus \eqref{s32e3} implies that $zC=C.$ Therefore by Proposition \ref{s2p17}, $z\in C$. Since $z$ is arbitrary, then we have that 
	\begin{equation}
	Z(G_{m,n})\subseteq C.\label{s32e4}
	\end{equation}
	Hence \eqref{s32e2} and \eqref{s32e4} then imply that $Z(G_{m,n})=C.$
\end{proof}

\begin{thm}
	The integers ${m,n>1}$, are uniquely determined by ${G_{m,n}}$.
\end{thm}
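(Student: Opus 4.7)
The plan is to extract the unordered pair $\{m,n\}$ from any group abstractly isomorphic to $G_{m,n}$, using the invariants already established in this chapter.

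First, I would observe that the center is preserved by any isomorphism: if $\Phi : G_{m,n} \to G_{m',n'}$ is an isomorphism, then $\Phi(Z(G_{m,n})) = Z(G_{m',n'})$, since commutation with every element of the group is a property expressible purely in terms of the group structure. Consequently $\Phi$ descends to an isomorphism of the corresponding quotients, and by the previous proposition this yields
\[
\mathbb{Z}_m \star \mathbb{Z}_n \;\cong\; G_{m,n}/Z(G_{m,n}) \;\cong\; G_{m',n'}/Z(G_{m',n'}) \;\cong\; \mathbb{Z}_{m'} \star \mathbb{Z}_{n'}.
\]
So it suffices to prove that the isomorphism type of $\mathbb{Z}_m \star \mathbb{Z}_n$ determines $\{m,n\}$.

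Next, I would describe the set $S$ of orders of torsion elements of $\mathbb{Z}_m \star \mathbb{Z}_n$. On the one hand, Proposition \ref{s23p3} says that every torsion element of $\mathbb{Z}_m \star \mathbb{Z}_n$ is conjugate to an element of $\mathbb{Z}_m$ or of $\mathbb{Z}_n$, and conjugation preserves order, so every order in $S$ must divide $m$ or $n$. On the other hand, the canonical inclusions $\mathbb{Z}_m, \mathbb{Z}_n \hookrightarrow \mathbb{Z}_m \star \mathbb{Z}_n$ exhibit every divisor of $m$ (respectively $n$) as the order of some element. Hence $S$ is exactly $D(m) \cup D(n)$, where $D(k)$ denotes the set of positive divisors of $k$.

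Finally, since $\gcd(m,n)=1$ and $m,n>1$, neither integer divides the other and $D(m)\cap D(n)=\{1\}$; therefore $m$ and $n$ are precisely the two maximal elements of $S$ under divisibility. Because $S$ is an isomorphism invariant of $\mathbb{Z}_m \star \mathbb{Z}_n$, the pair $\{m,n\}$ is recovered from $G_{m,n}$, proving the theorem. The main obstacle is the torsion characterization in the middle step, which rests on Proposition \ref{s23p3}; once that is in hand, the coprimality hypothesis makes the extraction of $\{m,n\}$ from $S$ a formal consequence of the divisibility structure.
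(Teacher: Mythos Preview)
Your argument is correct, but it differs from the paper's in how the pair $\{m,n\}$ is extracted from $\mathbb{Z}_m\star\mathbb{Z}_n$. The paper first computes the abelianization $\mathrm{Ab}(\mathbb{Z}_m\star\mathbb{Z}_n)\cong\mathbb{Z}_m\times\mathbb{Z}_n$, whose order pins down the product $mn$; it then invokes Proposition~\ref{s23p3} only to obtain the single number $\max(m,n)$ as the maximal torsion order, and recovers the other integer by division. You instead bypass abelianization entirely and read off both $m$ and $n$ at once as the two maximal elements of the full set $S=D(m)\cup D(n)$ of torsion orders under the divisibility relation, leaning on $\gcd(m,n)=1$ to guarantee that neither is dominated by the other and that no third maximal element appears. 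Your route is a bit more economical (one invariant instead of two), while the paper's has the mild advantage that the step ``$mn$ and $\max(m,n)$ determine $\{m,n\}$'' needs no appeal to coprimality. Either way the essential ingredient---the torsion description of a free product supplied by Proposition~\ref{s23p3}---is the same.
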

\begin{proof}
	To prove this, we first show that the abelianization of $\mathbb{Z}_m\star\mathbb{Z}_n$ is $\mathbb{Z}_m\times \mathbb{Z}_n$. By definition of abelianization, one has that the abelianization of $\mathbb{Z}_m\star\mathbb{Z}_n$, denoted by  $\mathrm{Ab}(\mathbb{Z}_m\star\mathbb{Z}_n)$, is $$\mathrm{Ab}(\mathbb{Z}_m\star\mathbb{Z}_n)=\frac{\mathbb{Z}_m\star\mathbb{Z}_n}{\left[\mathbb{Z}_m\star\mathbb{Z}_n, \mathbb{Z}_m\star\mathbb{Z}_n\right]},$$
	where $\left[\mathbb{Z}_m\star\mathbb{Z}_n, \mathbb{Z}_m\star\mathbb{Z}_n\right]$ is the commutator subgroup of $\mathbb{Z}_m\star\mathbb{Z}_n.$ Therefore, we have that
	\begin{eqnarray*}
		\mathrm{Ab}(\mathbb{Z}_m\star\mathbb{Z}_m)&=&\frac{\mathbb{Z}_m\star\mathbb{Z}_n}{\left[\mathbb{Z}_m\star\mathbb{Z}_n, \mathbb{Z}_m\star\mathbb{Z}_n\right]}\\&=&\frac{\langle{a,b|a^m=1, b^n=1,}\rangle}{\langle{aba^{-1}b^{-1}}\rangle}\\
		&=&\langle{a,b|a^m=1, b^n=1, aba^{-1}b^{-1}=1}\rangle\\
		&=&\langle{a,b|a^m=1, b^n=1, ab=ba}\rangle\\
		&=&\mathbb{Z}_m\times\mathbb{Z}_n.
	\end{eqnarray*}
	Since $|\mathbb{Z}_m\times\mathbb{Z}_n|=mn$, then the product $mn$ is uniquely determined by $\mathbb{Z}_m\star\mathbb{Z}_n$, and therefore $mn$ is uniquely determined by $G_{m,n}.$ By taking $A=\mathbb{Z}_m$ and $B=\mathbb{Z}_n$ in Proposition \ref{s23p3}, we have that all torsion elements of $\mathbb{Z}_m\star\mathbb{Z}_n$ are conjugate of torsion elements of $\mathbb{Z}_m$ or $\mathbb{Z}_n$. Hence, if $x\in \mathrm{Tor}(\mathbb{Z}_m\star\mathbb{Z}_n)$, and that $\mathrm{ord}(x)=p\in\mathbb{Z}$, then there exist $y\in \mathbb{Z}_m\star \mathbb{Z}_n$, and $z\in \mathrm{Tor}(\mathbb{Z}_m)$ or $\mathrm{Tor}(\mathbb{Z}_n)$ such that
	$$x=yzy^{-1},\text{ and }x^p=e.$$
	But then $x^p=(yzy^{-1})^p=yz^py^{-1}=e\implies z^p=e$. Since $z\in\mathrm{Tor}(\mathbb{Z}_m)$ or $\mathrm{Tor}(\mathbb{Z}_n)$, then by Corollary \ref{s2c1}, $p$ divides $m$ or $n$. 
	This then implies that the maximum order of torsion elements of $\mathbb{Z}_m\star\mathbb{Z}_n$ is 
	\newpage
	the larger of $m$ or $n$.
	The maximum of $m$ or $n$, say $m$ is uniquely determined by $\mathbb{Z}_m\star\mathbb{Z}_n$. Hence $m$ is uniquely determined by $G_{m,n}.$
	Since $mn$ and $m$ are uniquely determined by $G_{m,n}$, then $n$ is uniquely determined by $G_{m,n}$.
\end{proof}
In the next section, we will give an algorithm for computing the knot group of an arbitrary knot. This is the generalization of this work.

\section{Calculating the Fundamental Group of Arbitrary Knots}
\label{c3s3.3}
The goal of this section is to compute $\pi_1(\mathbb{R}^3\setminus L)$, where $L$ is an arbitrary knot. In order to achieve this, we will use an algorithm for computing a presentation of $\pi_1(\mathbb{R}^3\setminus L)$ called the Wirtinger presentation.

\subsection{Wirtinger presentation\cite{allen}} Let $L\subset \mathbb{R}^3$ be an arbitrary knot. We position the knot to lie almost flat on a table so that $L$ consists of finitely many disjoint arcs $\alpha_i$, where it intersects the top of the table together with finitely many disjoint arcs $\beta_l$ as shown in Figure \ref{fig3.6}. Next, we construct a two dimensional complex $X$ as follows
\begin{figure}[htbp!]
	\begin{center}
		\begin{minipage}[b]{0.3\linewidth}
			\centering
			\includegraphics[width=1.\linewidth]{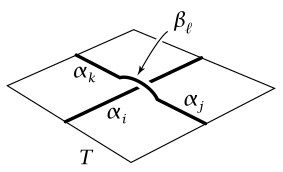} 
			\caption{Knot on a table \cite{allen}}\label{fig3.6} 
			\vspace{0.8ex}
		\end{minipage}\hspace{15pt}
		\begin{minipage}[b]{0.3\linewidth}
			\centering
			\includegraphics[width=1.\linewidth]{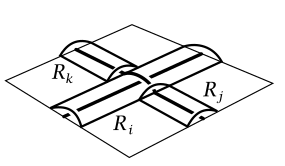} 
			\caption{Attaching $R_is$ \cite{allen}}\label{fig3.7}
			\vspace{0.8ex}
		\end{minipage}\hspace{15pt}
		\begin{minipage}[b]{0.3\linewidth}
			\centering
			\includegraphics[width=1.\linewidth]{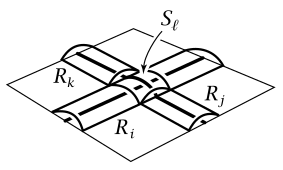} 
			\caption{Attaching $S_l$ \cite{allen}}\label{fig3.8}
			\vspace{0.8ex}  
		\end{minipage}
	\end{center}   
\end{figure}
\vspace{-15pt}

First, place a long, thin rectangular strip $R_i$ above each of the arcs $\alpha_i$, curved to run parallel to $\alpha_i$ along the full length of $\alpha_i$ and arched so that the two long edges of $R_i$ are identified with points of $T$. This is illustrated in Figure \ref{fig3.7}.

Any of the arcs $\beta_l$ that crosses over $\alpha_i$ are positioned to lie on $R_i.$ Finally, place a square $S_l$ above each arc $\beta_l.$ The square $S_l$ is bent downward along its four edges so that these edges are identified with points of the strips $R_i, R_j$, and $R_k.$ The two opposite edges of $S_l$ are identified with short edges of $R_j$ and $R_k$ while the other two opposite edges of $S_l$ are identified with two arcs crossing the interior of $R_i$ as shown in Figure \ref{fig3.8}.

With this construction, we see that $L$ is a subspace of $X$. But after we lift up $L$ slightly into the complement of $X$, then $\mathbb{R}^3\setminus L$ deformation retracts onto $X$. 
\begin{thm}
	The fundamental group of $\mathbb{R}^3\setminus L$ has a presentation with one generator $a_i$ for each strip $R_i$ and one relation of the form $a_ia_j=a_ka_i$ for each square $S_l$, where the indices are as in the figures above.\label{c3t332}
\end{thm}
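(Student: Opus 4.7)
The plan is to combine the stated deformation retraction of $\mathbb{R}^3\setminus L$ onto $X$ with Theorem~\ref{c2t257} to conclude $\pi_1(\mathbb{R}^3\setminus L)\cong\pi_1(X)$, and then to compute $\pi_1(X)$ by building $X$ up in two stages: first the table together with every strip, then the squares attached as 2-cells along loops in this first stage. I would set $X_1:=T\cup\bigcup_i R_i$ and argue that $\pi_1(X_1)$ is free on generators $a_1,\dots,a_n$, one for each strip. Concretely, each $R_i$ is a rectangle glued to the contractible table $T$ along its two long edges, so a homotopy-pushout argument (collapse $T$ to a point, a valid move since $T$ is contractible and the attachment is a cofibration) shows that $T\cup R_i$ has infinite cyclic fundamental group, generated by the meridian loop threading through the arch of $R_i$. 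Iterating this for all strips yields
\[ \pi_1(X_1)\cong F_n=\langle a_1,\dots,a_n\mid\ \rangle. \]

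Next I would attach the squares $S_l$ one at a time. Attaching a single 2-cell to a space $Y$ along a loop $\gamma$ is a direct application of van Kampen's theorem: take $U$ to be a small open neighborhood of the interior of the 2-cell (contractible) and $V$ the complement of a single interior point (which deformation retracts onto $Y$); then $U\cap V$ deformation retracts onto a circle mapping to $\gamma$ inside $V$ and to a null-homotopic loop inside $U$. This yields $\pi_1(Y\cup_\gamma D^2)\cong\pi_1(Y)/\langle\langle[\gamma]\rangle\rangle$. Applying this inductively over all squares gives
\[ \pi_1(X)\;\cong\;\langle a_1,\dots,a_n\mid [\gamma_l]=1\ \text{for every square}\ S_l\rangle, \]
where $\gamma_l$ denotes the boundary loop of $S_l$ in $X_1$.

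It then remains to identify each $[\gamma_l]$ in terms of the $a_i$. By the construction of $S_l$, its boundary is a concatenation of four edges: a short edge of $R_j$, an arc crossing the interior of $R_i$, a short edge of $R_k$, and a second arc crossing $R_i$. The short edges of $R_j$ and $R_k$ each traverse the arch of their respective strip and hence contribute $a_j^{\pm 1}$ and $a_k^{\pm 1}$, while the two arcs across $R_i$ each traverse the arch of $R_i$ and contribute $a_i^{\pm 1}$. Reading the four segments in order with consistent orientations yields $[\gamma_l]=a_i a_j a_i^{-1} a_k^{-1}$, which is equivalent to the required relation $a_i a_j = a_k a_i$.

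The main obstacle is this last identification: verifying that the boundary word really is $a_i a_j a_i^{-1} a_k^{-1}$ and not some sign variant such as $a_j a_i a_k^{-1} a_i^{-1}$. This requires fixing consistent orientations for each arc $\alpha_i$, for each generator $a_i$, and for the boundary of each square, and then performing a careful local analysis at the crossing to check in particular that the two arcs of $\partial S_l$ transverse to $R_i$ do cross the arch in opposite senses (contributing $a_i$ and $a_i^{-1}$, not $a_i$ twice), which in turn relies on the fact that the overcrossing strand $\beta_l$ passes across $\alpha_i$ without doubling back. This orientation bookkeeping carries all the geometric content of the Wirtinger relation; by contrast, the van Kampen reductions and the computation of $\pi_1(X_1)$ are comparatively routine.
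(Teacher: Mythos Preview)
Your proposal is correct and follows the same broad outline as the paper (retract to $X$, then compute $\pi_1(X)$ by attaching 2-cells), but the two arguments organize the buildup of $X$ differently. You work in two stages: first $X_1=T\cup\bigcup_i R_i$, whose fundamental group you obtain by collapsing the contractible table via a homotopy-pushout argument, and then the squares $S_l$ as 2-cells. The paper instead works in three stages near a single crossing: it begins with a scaffolding space $Y$ consisting of the table together with six explicit arcs (so $\pi_1(Y)$ is visibly free on six named loops $a_i,a_i',a_j,a_j',a_k,a_k'$), then attaches the three strips $R_i,R_j,R_k$ as 2-cells and uses Proposition~\ref{s3p1} to reduce to the free group $\langle a_i,a_j,a_k\rangle$, and only then attaches the square $S_l$.

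What each approach buys: your collapse argument for $\pi_1(X_1)\cong F_n$ is cleaner and avoids the doubling of generators and the intermediate relations $a_i=a_i'$ that the paper has to derive. On the other hand, the paper's extra granularity pays off exactly at the step you flag as the main obstacle: because it has named every edge of the local picture as an explicit path, it can compute the attaching word of $S_l$ by a literal concatenation and cancellation of path symbols, arriving at $\psi\simeq a_i\bar a_j\bar a_i a_k$ without any separate orientation bookkeeping. Your identification of $[\gamma_l]$ is correct but remains at the level of a geometric reading of the four edges; if you wanted to close that gap fully, the paper's device of naming the arcs and tracing $\psi_1\psi_2\psi_3\psi_4$ explicitly is one concrete way to do it.
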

\begin{proof}
 Since $\mathbb{R}^3\setminus L$ deformation retracts onto $X$, then by Theorem \ref{c2t257}, we have that
\begin{equation}
\pi_1(\mathbb{R}^3\setminus L)\cong \pi_1(X).\label{s33e1}
\end{equation}
To find $\pi_1(X)$,
we first start with a scaffolding space $Y$ as shown in Figure \ref{fig3.9} below. The knot under consideration is represented with the line segment $|PQ|$ and $|RS|$ in Figure \ref{fig3.9}. The segment $|PQ|$ is the overpass crossing and segment $|RS|$ is the underpass crossing. Space $Y$ is the shaded area which
\begin{figure}[htbp!]
	\begin{center}
		\begin{minipage}[b]{0.55\linewidth}
			\centering
			\includegraphics[width=0.9\linewidth]{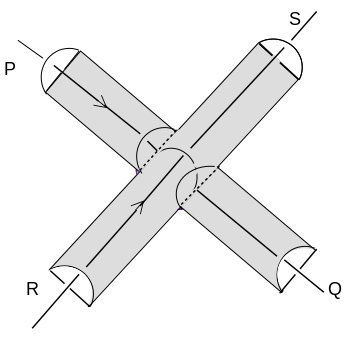} 
			\caption{\small{Space $Y$ (Shaded region with attached six arcs)}}\label{fig3.9} 
		\end{minipage}
		\begin{minipage}[b]{0.5\linewidth}
			\centering
			\includegraphics[width=0.95\linewidth]{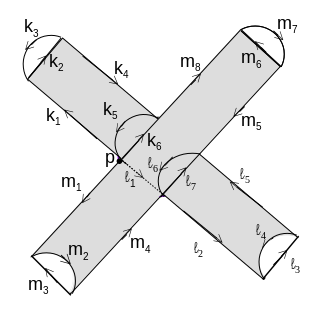} 
			\caption{\small{Loops around space $Y$}}\label{fig3.10}
		\end{minipage}  
		\begin{minipage}[b]{0.5\linewidth}
			\centering
			\includegraphics[width=0.95\linewidth]{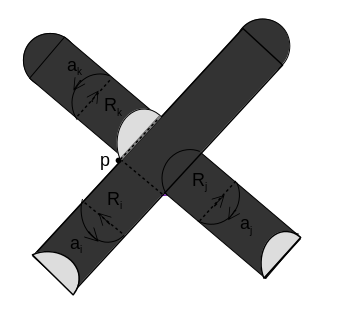} 
			\caption{Space $Z$.}\label{fig3.11} 
		\end{minipage}
		\begin{minipage}[b]{0.5\linewidth}
			\centering
			\includegraphics[width=0.95\linewidth]{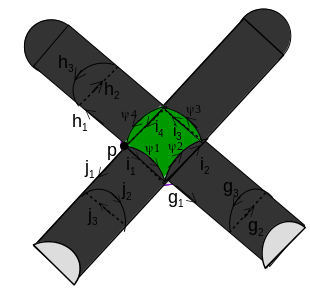} 
			\caption{Space $X$.}\label{fig3.12}
		\end{minipage}
	\end{center}   
\end{figure}
lies below $|PQ|$ and $|RS|$, together with the six attached arcs which pass over $|PQ|$ and $|RS|.$ It can be clearly seen in Figure \ref{fig3.10} that $$\pi_1(Y,p)\cong\langle a_k,a_j,a_i,a_k',a_j',a_i'\rangle,$$
where $a_k=k_1k_2k_3\bar{k_1},~ a_k'=k_6k_5,~a_j=l_1l_2l_3l_4\bar{l_2}\bar{l_1},~a_j'=l_1l_7l_6\bar{l_1},~a_i=m_1m_2m_3\bar{m_1},$ and $a_i'=k_6m_8m_7m_6\bar{m_8}\bar{k_6}.$ 

Next, we will attach 2-cells $R_i, R_j,$ and $R_k$ to space $Y$ get a new space $Z.$ The 2-cell $R_i$ is attached through the homomorphism $\phi_i:S^1\rightarrow Y$, where $\phi_i=m_1m_2m_4l_7\bar{m_5}\bar{m_7}\bar{m_8}\bar{k_6}.$ We must note that $R_i$ is
\newpage
kept under the overpass of segment $|PQ|$. The 2-cell $R_k$ is attached via the homomorphism $\phi_k:S^1\rightarrow Y$, where $\phi_k=\bar{k_5}\bar{k_4}k_3\bar{k_1}.$
Lastly, we attach $R_j$ via the homomorphism $\phi_j:S^1\rightarrow Y$, where $\phi_j=l_2\bar{l_4}l_5l_6.$
The new space formed, that is, space $Z$, is shown in Figure \ref{fig3.11} above.

To find the fundamental group of the space $Z$, we will make use of the Proposition \ref{s3p1} below. Consider the description below.

Suppose we attach a collection of 2-cells $e_\alpha^2$ to a path-connected space $X$ via maps $\psi_\alpha:S^1\to X$, producing a space $Y$. If $s_0$ is a basepoint of $S^1$, then $\psi_\alpha$ determines a loop at $\psi_\alpha(s_0)$ that we shall call $\psi_\alpha,$ even though technically loops are maps $I\to X$ rather than $S^1\to X$. For different $\alpha$'s, the basepoints $\psi_\alpha(s_0)$ of these loops $\psi_\alpha$ may not all coincide. 
To remedy this, choose a basepoint $x_0\in X$ and a path $\gamma_\alpha$ in $X$ from $x_0$ to $\psi_\alpha(s_0)$ for each $\alpha$. Then $\gamma_\alpha\psi_\alpha\bar{\gamma_\alpha}$ is a loop at $x_0$. This loop may not be nullhomotopic in $X$, but it will certainly be nullhomotopic after the cell $e_\alpha^2$ is attached. Thus the normal subgroup $N\subset \pi_1(X,x_0)$ generated by all the loops $\gamma_\alpha\psi_\alpha\bar{\gamma_\alpha}$ for varying $\alpha$ lies in the kernel of the map $\pi_1(X,x_0)\to \pi_1(Y,x_0)$ induced by the inclusion $X\hookrightarrow Y$.

\begin{pro}\cite{allen}:
	 If the space $Y$ is obtained from $X$ by attaching 2-cells as described above, then the inclusion $X\hookrightarrow Y$ induces a surjection $\pi_1(X,x_0)\to\pi_1(Y,x_0)$ whose kernel is $N$. Thus $$\pi_1(Y)\cong \pi_1(X)/N.\label{s3p1}$$
\end{pro}\vspace{-15pt}
\begin{proof}
	The proof of this proposition can be found in [\cite{allen}, page 50].
\end{proof}
So by applying this proposition to $Z$, we then have that 
\begin{equation}
\pi_1(Z,p)\cong \pi_1(Y,p)/N,\label{s33e2}
\end{equation}
where $N$ is a normal subgroup of $(Y,p)$ generated by $\{\phi_i, l_1\phi_j\bar{l_1},\phi_k\}$. But then if we suppose that
$$r_i=m_1\bar{m_3}m_4l_7\bar{m_5}m_6\bar{m_8}\bar{k_6},~r_j=l_1l_2l_3l_5\bar{l_7}\bar{l_1},\text{ and }r_k=k_6\bar{k_4}\bar{k_2}\bar{k_1},$$
Then
\begin{eqnarray*}
	a_ir_i\bar{a_i'}&=&m_1m_2m_3\bar{m_1}m_1\bar{m_3}m_4l_7\bar{m_5}m_6\bar{m_8}\bar{k_6}k_6m_8\bar{m_6}\bar{m_7}\bar{m_8}\bar{k_6}
	=m_1m_2m_4l_7\bar{m_5}\bar{m_7}\bar{m_8}\bar{k_6}
	=\phi_i,\\
	\bar{a_j}r_ja_j'&=&l_1l_2\bar{l_4}\bar{l_3}\bar{l_2}\bar{l_1}l_1l_2l_3l_5\bar{l_7}\bar{l_1}l_1l_7l_6\bar{l_1}
	=l_1l_2\bar{l_4}l_5l_6\bar{l_1}
	=l_1\phi_j\bar{l_1},\text{ and }\\
	\bar{a_k'}r_ka_k&=&\bar{k_5}\bar{k_6}k_6\bar{k_4}\bar{k_2}\bar{k_1}k_1k_2k_3\bar{k_1}
	=\bar{k_5}\bar{k_4}k_3\bar{k_1}
	=\phi_k.
\end{eqnarray*}
But $r_i, r_j,$ and $r_k$ are homotopic to a point. In fact, they are homotopic to the basepoint $p$. Hence
$$\phi_i=a_ir_i\bar{a_i'}\simeq a_i\bar{a_i'},~~l_1\phi_j\bar{l_1}=\bar{a_j}r_ja_j'\simeq \bar{a_j}a_j',\text{ and }\phi_k=\bar{a_k'}r_ka_k\simeq \bar{a_k'}a_k.$$
Thus,\vspace{-10pt}
\begin{eqnarray*}
	\pi_1(Z,p)&\cong &\pi_1(Y,p)/N\\
	&\cong &\dfrac{\langle a_k,a_j,a_i,a_k',a_j',a_i'\rangle}{\langle a_i\bar{a_i'}, \bar{a_j}a_j',\bar{a_k'}a_k\rangle}\\
	&=&{\langle a_k,a_j,a_i,a_k',a_j',a_i'|a_i\bar{a_i'}=1, \bar{a_j}a_j'=1, \bar{a_k'}a_k=1\rangle}\\
	&=&\langle a_k,a_j,a_i,a_k',a_j',a_i'|a_i=a_i', a_j'=a_j, a_k=a_k'\rangle\\
	&=&\langle a_i,a_j,a_k\rangle.
\end{eqnarray*}
Finally, we attach a 2-cell $S_l$, to get $X$, via the homomorphism $\psi:S^1\rightarrow Z$, where
$\psi = \psi_1\psi_2\psi_3\psi_4.$
The new space formed, that is, space $X$, is exactly the space we got from the Wirtinger algorithm as shown in Figure \ref{fig3.12}. We must ensure that $S_l$ lies on top of the overpass of segment $|PQ|$. Again, by Proposition \ref{s3p1}, we have that $$\pi_1(X,p)\cong\frac{\pi_1(Z,p)}{N},$$
where $N$ is a normal subgroup of $\pi_1(Z,p)$ generated by $\psi=\psi_1\psi_2\psi_3\psi_4.$ By comparing Figures \ref{fig3.11}  
to Figure \ref{fig3.12}, we have that 
$a_i=j_1j_2j_3\bar{j_1}, ~a_k=h_1h_2h_3\bar{h_1},\text{ and }a_j=i_1g_1g_2g_3\bar{g_1}\bar{i_1}.$
More so, we have that
	$\psi_1\bar{i_1}\simeq j_1j_2j_3\bar{j_1}$,
	$i_1\psi_2i_3i_4\simeq i_1\psi_2\bar{i_2}\bar{i_1}\simeq i_1(g_1\bar{g_3}\bar{g_2}\bar{g_1})\bar{i_1}$,
	$\bar{i_4}\bar{i_3}\psi_3i_4\simeq \bar{i_4}(i_4i_1\bar{\psi_1}\bar{i_4})i_4=i_1\bar{\psi_1}\simeq j_1\bar{j_3}\bar{j_2}\bar{j_1}$, and 
	$\bar{i_4}\psi_4\simeq h_1h_2h_3\bar{h_1}.$
Therefore,
\begin{eqnarray*}
	\psi&=&\psi_1\psi_2\psi_3\psi_4\\
	&\simeq& \psi_1(\bar{i_1}i_1)\psi_2(i_3i_4\bar{i_4}\bar{i_3})\psi_3(i_4\bar{i_4})\psi_4\\
	&\simeq&(\psi_1\bar{i_1})(i_1\psi_2 i_3i_4)(\bar{i_4}\bar{i_3}\psi_3i_4)(\bar{i_4}\psi_4)\\
	&\simeq&(j_1j_2j_3\bar{j_1})(i_1g_1\bar{g_3}\bar{g_2}\bar{g_1}\bar{i_1})(j_1\bar{j_3}\bar{j_2}\bar{j_1})(h_1h_2h_3\bar{h_1})\\
	&=&a_i\bar{a_j}\bar{a_i}a_k.
\end{eqnarray*}
Hence $N$ is generated by $\psi\simeq a_i\bar{a_j}\bar{a_i}a_k.$ Thus $N\simeq \langle a_i\bar{a_j}\bar{a_i}a_k\rangle$. But if a group is generated by an element, the group is also generated by the inverse of that element. The inverse of $a_i\bar{a_j}\bar{a_i}a_k$ is $\bar{a_k}a_ia_j\bar{a_i}$. Hence $N\simeq\langle \bar{a_k}a_ia_j\bar{a_i}\rangle.$
Therefore 
\begin{eqnarray}
	\pi_1(X,p)\cong\frac{\pi_1(Z,p)}{N}
	\cong\frac{\langle a_k,a_j,a_i\rangle}{\langle \bar{a_k}a_ia_j\bar{a_i}\rangle}
	=\langle a_i,a_j,a_k|\bar{a_k}a_ia_j\bar{a_i}\rangle
	=\langle a_i,a_j,a_k|a_ia_j=a_ka_i\rangle.\label{s33e3}
\end{eqnarray}
From \eqref{s33e1}, we have that 
\begin{equation}
\pi_1(\mathbb{R}^3\setminus L)\cong \pi_1(X)\cong \langle a_i,a_j,a_k|a_ia_j=a_ka_i\rangle.\label{c3e334}
\end{equation}
So, if the knot has $n$ arcs, we are going to have $n$ rectangular strips $R_i$, and $n$ squares $S_l$. For each of these rectangular strips, we conclude from \eqref{c3e334} that $\pi_1(\mathbb{R}^3\setminus L)$ has a presentation with one generator $a_i$, and one relation of the form $a_ia_j=a_ka_i$ for each square $S_l$.
\end{proof}
\begin{exa}
	Let us apply this result to compute the fundamental group of a trefoil knot, $K_{2,3}$. A trefoil knot has 3 arcs as shown in Figure \ref{fig314} below.  Hence, by Theorem \ref{c3t332}, we have that 
	\begin{eqnarray}
	\pi_1(\mathbb{R}^3\setminus K_{2,3})&\cong&\langle a_1, a_2, a_3|a_1a_2=a_3a_1, a_2a_3=a_1a_2, a_3a_1=a_2a_3\rangle\nonumber\\
	&=&\langle a_1,a_2,a_3|~a_1a_2a_1^{-1}=a_3, ~a_2a_3a_2^{-1}=a_1,~a_3a_1a_3^{-1}=a_2\rangle\nonumber\\
	&=&\langle a_1,a_2|~a_2a_1a_2a_1^{-1}a_2^{-1}=a_1,~a_1a_2a_1^{-1}a_1a_1a_2^{-1}a_1^{-1}=a_2\rangle\nonumber\\
	&=&\langle a_1,a_2|~a_2a_1a_2=a_1a_2a_1\rangle.\label{leq2}
	\end{eqnarray}
 Take $b=a_2a_1$ and $a=a_1b$, then $a_1=ab^{-1}$, and $a_2=ba_1^{-1}=bba^{-1}=b^2a^{-1}$. Substituting these into equation \eqref{leq2}, we then have that 
 \begin{eqnarray}
	 \pi_1(\mathbb{R}^3\setminus K_{2,3})&\cong&\langle a, b|~ba_2=a_1b~\rangle\nonumber\\
	 &=&\langle a, b|~bb^2a^{-1}=ab^{-1}b~\rangle\nonumber\\
	 &=&\langle a, b|~b^3a^{-1}=a~\rangle\nonumber\\
	 &=&\langle a, b|~b^3=a^2~\rangle.\label{leq3}
 \end{eqnarray}
 By substituting $m=2$ and $n=3$ in \eqref{s3e20}, we have that $\pi_1(\mathbb{R}^3\setminus K_{2,3})=\langle a,b|~ a^2=b^3\rangle$, which is exactly the same group in \eqref{leq3} above.
\end{exa}
\newpage
\begin{exa}
	We want to compute the fundamental group of a knot different from torus knot using our previous result. The knot is given in Figure \ref{fig313} below.
\end{exa}
\begin{figure}[htbp!]
	\begin{center}
		\hspace{-40pt}
		\begin{minipage}[b]{0.3\linewidth}
			\centering
			\includegraphics[width=1.\linewidth]{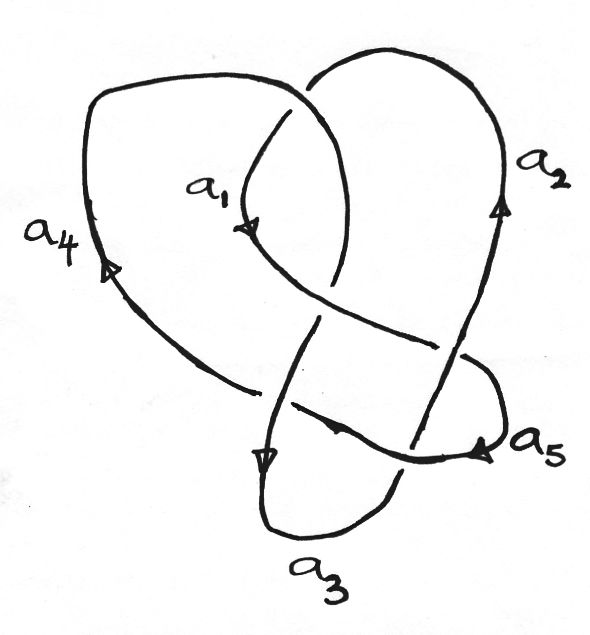} 
			\caption{A knot}\label{fig313} 
		\end{minipage}\hspace{70pt}
		\begin{minipage}[b]{0.3\linewidth}
			\centering
			\includegraphics[width=1.2\linewidth]{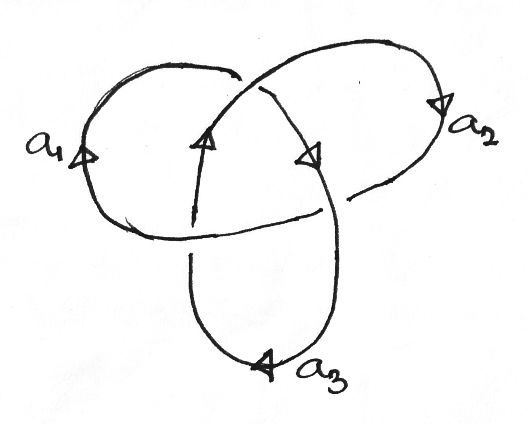} 
			\caption{Trefoil knot}\label{fig314}
		\end{minipage}
	\end{center}  
\end{figure}
Let us denote this knot by $L$. Then $L$ has five crossings and five arcs, hence by Theorem \ref{c3t332} , we have that
\begin{eqnarray}
\pi_1(\mathbb{R}^3\setminus L)&=&\langle a_1,a_2,a_3,a_4,a_5| a_4a_1=a_2a_4, a_1a_3=a_4a_1, a_2a_5=a_1a_2,a_5a_2=a_3a_5,a_3a_4=a_5a_3\rangle\nonumber\\
&=&\langle a_1,a_2,a_3,a_4,a_5| a_1=\bar{a_4}a_2a_4, a_3=\bar{a_1}a_4a_1, a_5=\bar{a_2}a_1a_2, a_3=a_5a_2\bar{a_5}, a_4=\bar{a_3}a_5a_3\rangle\nonumber\\
&=&\langle a_1,a_2,a_3,a_4,a_5| a_1=\bar{a_4}a_2a_4, a_5=\bar{a_2}a_1a_2, a_3=a_5a_2\bar{a_5}, a_4=\bar{a_3}a_5a_3\rangle. \label{c3e337}
\end{eqnarray}
Since $a_3=a_5a_2\bar{a_5}$, then $a_4=\bar{a_3}a_5a_3=a_5\bar{a_2}\bar{a_5}a_5a_5a_2\bar{a_5}=a_5\bar{a_2}a_5a_2\bar{a_5}.$ Hence \eqref{c3e337} becomes 
\begin{equation}
\pi_1(\mathbb{R}^3\setminus L)= \langle a_1,a_2,a_4,a_5| a_1=\bar{a_4}a_2a_4, a_5=\bar{a_2}a_1a_2, a_4=a_5\bar{a_2}a_5a_2\bar{a_5}\rangle.\label{c3e338}
\end{equation}
Also, since $a_4=a_5\bar{a_2}a_5a_2\bar{a_5}$, then
$a_1=\bar{a_4}a_2a_4=a_5\bar{a_2}\bar{a_5}a_2\bar{a_5}a_2a_5\bar{a_2}a_5a_2\bar{a_5}.$Thus \eqref{c3e338} becomes
\begin{eqnarray}
	\pi_1(\mathbb{R}^3\setminus L)= \langle a_1,a_2,a_5| a_1=a_5\bar{a_2}\bar{a_5}a_2\bar{a_5}a_2a_5\bar{a_2}a_5a_2\bar{a_5}, ~a_5=\bar{a_2}a_1a_2\rangle.\label{c3e339}
\end{eqnarray}
Also, since since $a_1=\bar{a_4}a_2a_4=a_5\bar{a_2}\bar{a_5}a_2\bar{a_5}a_2a_5\bar{a_2}a_5a_2\bar{a_5}$, then 
$$ a_5=\bar{a_2}a_1a_2=\bar{a_2}\bar{a_4}a_2a_4a_2=\bar{a_2}a_5\bar{a_2}\bar{a_5}a_2\bar{a_5}a_2a_5\bar{a_2}a_5a_2\bar{a_5}a_2.$$
Therefore, \eqref{c3e339} becomes
\begin{eqnarray*}
\pi_1(\mathbb{R}^3\setminus L)&=& \langle a_2,a_5|a_5=\bar{a_2}a_5\bar{a_2}\bar{a_5}a_2\bar{a_5}a_2a_5\bar{a_2}a_5a_2\bar{a_5}a_2\rangle\\
&=&\langle a_2,a_5|a_2a_5\bar{a_2}a_5=a_5\bar{a_2}\bar{a_5}a_2\bar{a_5}a_2a_5\bar{a_2}a_5a_2\rangle.
\end{eqnarray*}
In the next chapter, we give the conclusion of this work by summarizing the results that were obtained. 

\chapter{Conclusion}
\label{chap4}
%
The main objective of this work was to compute and describe the fundamental group of $\mathbb{R}^3\setminus K$, where $K$ is a torus knot. To do this, we first showed that the fundamental group of $\mathbb{R}^3\setminus K$ is isomorphic to the fundamental group of $S^3\setminus K$. Then, we showed that $S^3\setminus K$ deformation retracts onto a 2-dimensional complex whose fundamental group is $\langle a,b|~a^m=b^n\rangle$. Hence,
$$\pi_1(\mathbb{R}^3\setminus K)\cong \langle a,b|~a^m=b^n\rangle.$$
Further analysis of $\pi_1(\mathbb{R}^3\setminus K)$ shows that the group is infinite cyclic whenever $m$ or $n$ is 1 and that the center of the group is $\langle a^m\rangle=\langle b^n\rangle$. Also, we succeeded in showing that the quotient group ${\pi_1(\mathbb{R}^3\setminus K)}/{\langle a^m\rangle}$ is isomorphic to the free product $\mathbb{Z}_m\star \mathbb{Z}_n$, and hence the integers $m$ and $n$ are uniquely determined by the group $\pi_1(\mathbb{R}^3\setminus K).$

Furthermore, we extended our calculation to the computation of the fundamental group of an arbitrary knot using the Wirtinger presentation. We found that for an arbitrary knot $L$ with $n$ arcs, the fundamental group of $\mathbb{R}^3\setminus L$ has a presentation with one generator $a_i$ for each arc, and one relation of the form $a_ia_j=a_ka_i$ for each crossing.

Finally, we computed the fundamental group of a trefoil knot using the result in the previous paragraph. We compared this fundamental group to the one in the first paragraph and discovered that they are the same.

\appendix
\renewcommand*{\theHchapter}{\Alph{chapter}}

\endappendix
\chapter*{Acknowledgements}
\addcontentsline{toc}{chapter}{Acknowledgements}
 
Foremost, I would like to express my profound gratitude to God Almighty for the gift of life and for seeing me through this phase of my academic career. A special gratitude and deep regards go to my supervisors, Dr. Paul Arnaud Songhafouo Tsopm\'en\'e and Prof. Donald Stanley, for their exemplary monitoring, guidance, and enthusiasm throughout the course of this essay. Also, I am grateful to my tutor, Dr. Ihechukwu Chinyere, for all his corrections and words of encouragement during this work.

I would like to acknowledge with much appreciation the crucial role of the center president, Prof. Mama, the academic director, Prof. Marco, and the other staff members of AIMS Cameroon. Special thanks to the entire family of AIMS for making my stay here to be a success. I will not forget to thank my tutors for their support all through.

Words alone cannot express my gratitude to my parents, Mr. and Mrs. Mustapha, for their prayers, love and for leading me to the right path. Also, I thank my brother, Abass, for his advice and support.

To my classmates, thanks for your cooperation and for believing in Pan-Africanism. Finally, I thank my friends, Rabiat, Aisha, Tata Yolande, Mayar, Alaa, Romario, Adex, Seck, Tonton Yemi, Arnaud and mes autres amis for being there for me always and for all the fun we have had in the last ten months. You guys really made AIMS Cameroon worth staying for me. I love you all.
\renewcommand{\bibname}{References}
\bibliographystyle{apa} 
\bibliography{ilyas}
\addcontentsline{toc}{chapter}{References}
\end{document}